\numberwithin{equation}{section}
\newtheorem{thm}{Theorem}[section]
\newtheorem{prop}[thm]{Proposition}
\newtheorem{lem}[thm]{Lemma}
\newtheorem{cor}[thm]{Corollary}
\newtheorem{propdef}[thm]{Proposition/Definition}
\newtheorem*{iprob*}{Problem}
\theoremstyle{definition}
\newtheorem{defi}[thm]{Definition}
\newtheorem{rem}[thm]{Remark}
\newcommand{\ZZ}{\mathbf{Z}}
\newcommand{\RR}{\mathbf{R}}
\newcommand{\NN}{\mathbf{N}}
\newcommand{\CC}{\mathbf{C}}
\newcommand{\QQ}{\mathbf{Q}}
\newcommand{\KK}{\mathbf{K}}
\newcommand{\HR}{\mathbf{H}_{\RR}}
\newcommand{\HRI}{\HR^{\infty}}
\newcommand{\HC}{\mathbf{H}_{\CC}}
\newcommand{\HK}{\mathbf{H}_{\KK}}
\newcommand{\HCI}{\HC^{\infty}}
\newcommand{\HKI}{\HK^{\infty}}
\newcommand{\SO}{\mathbf{SO}}
\newcommand{\SU}{\mathbf{SU}}
\newcommand{\GL}{\mathbf{GL}}
\newcommand{\PSL}{\mathbf{PSL}}
\newcommand{\PGL}{\mathbf{PGL}}
\newcommand{\SSS}{\mathbf{S}}
\newcommand{\OO}{{\mathbf O}}
\newcommand{\UU}{{\mathbf U}}
\newcommand{\tUU}{\widetilde\UU}
\newcommand{\PO}{\mathbf{PO}}
\newcommand{\PU}{\mathbf{PU}}
\newcommand{\Isom}{{\rm Is}}
\newcommand{\Stab}{\mathrm{Stab}} 
\newcommand{\se}{\subseteq}
\newcommand{\inv}{^{-1}}
\newcommand{\ro}{\varrho}
\newcommand{\fhi}{\varphi}
\newcommand{\epsi}{\epsilon}
\newcommand{\lra}{\longrightarrow}
\newcommand{\ol}{\overline}
\newcommand{\cat}[1]{{\upshape CAT({\ensuremath#1})}\xspace}
\DeclareMathOperator{\arcosh}{arcosh}
\newcommand{\her}[2]{\left\langle#1,#2\right\rangle}
\newcommand{\ctran}{^*}
\newcommand{\one}{\boldsymbol{1}}
\DeclareMathOperator{\cartan}{Cart}
\DeclareMathOperator{\Reel}{Re}
\newcommand{\fraisse}{Fra\"\i ss\'e\xspace}
\newcommand{\hc}{H_\mathrm{c}}
\def\hyph{-\penalty0\hskip0pt\relax}
\title[Functions of hyperbolic type]{Notes on functions of hyperbolic type}
\date{July 2018}
\author[Nicolas Monod]{Nicolas Monod}
\address{EPFL, Switzerland}
\email{nicolas.monod@epfl.ch}
\begin{document}
\begin{abstract}
Functions of hyperbolic type encode representations on real or complex hyperbolic spaces, usually infinite\hyph{}dimensional.

These notes set up the complex case. As applications, we prove the existence of a non-trivial deformation family of representations of $\SU(1,n)$ and of its infinite\hyph{}dimensional kin $\Isom(\HCI)$. We further classify all the self\hyph{}representations of $\Isom(\HCI)$ that satisfy a compatibility condition for the subgroup $\Isom(\HRI)$. It turns out in particular that translation lengths and Cartan arguments determine each other for these representations.

In the real case, we revisit earlier results and propose some further constructions.
\end{abstract}
\maketitle
\thispagestyle{empty}

\setcounter{tocdepth}{1}
\tableofcontents
\newpage
\section{Introduction}
\subsection{Context}
A fundamental tool for the study of unitary representations of a group $G$ is the notion of (complex) \emph{function of positive type} on $G$, namely a function $F\colon G\to\CC$ such that
\begin{equation}\label{eq:def:FPT}
\sum_{j,k} \ol{c_j} \,c_k\, F(g_j\inv g_k) \geq 0
\end{equation}
holds for all $n\in \NN$, all $c_1, \ldots, c_n\in \CC$ and all $g_1, \ldots, g_n\in G$. Indeed the GNS construction establishes a perfect correspondance between such functions and unitary representations together with a cyclic vector. The great advantage of this correspondance is that it relates two very differents worlds: representations can be constructed analytically, harnessing e.g.\ the power of general harmonic analysis --- whilst the functions $F$ are concrete first-order structures on $G$, amenable to direct computations.

There is no substantial difference for \emph{orthogonal} representations on \emph{real} Hilbert spaces: a real function of positive type is defined in the same way and it is sufficient to check~\eqref{eq:def:FPT} for $c_j\in \RR$, provided $F$ is symmetric, i.e.\ $F(g\inv) = F(g)$. We recall furthermore that affine isometric actions on Hilbert spaces correspond to functions of \emph{conditionally negative type}~\cite[\S4]{GuichardetLNM}.

\medskip
\itshape
The purpose of these notes is to pursue the study of the corresponding tools for isometric representations on hyperbolic spaces.\upshape\ This study was initiated, in the real case, in~\cite{Monod-Py2_arx}. Just as in the Hilbertian world, our hyperbolic spaces may happen to be finite\hyph{}dimensional but are generally not. We thus write $\HR^\kappa$ for the real hyperbolic space of dimension $\kappa$ (any cardinal); formal definitions are recalled in Section~\ref{sec:prelim}. The separable infinite\hyph{}dimensional case, $\kappa=\aleph_0$, is the most common; we then simply write $\HRI$. We postpone the complex hyperbolic case to later in this introduction --- because it turns out to bring interesting complications.

\subsection{Real hyperbolic kernels}
In the Hilbertian setting, the first step is to forget the group structure and to characterise maps $h\colon X\to H$ from a \emph{set} $X$ to a Hilbert space $H$. This yields a kernel $\Phi\colon X\times X \to\CC$ defined by $\Phi(x,y) = \her xy$ which satisfies
\begin{equation}\label{eq:def:KPT}
\sum_{j,k} \ol{c_j}\, c_k \,\Phi(x_j, x_k)\geq 0.
\end{equation}
Any map $\Phi\colon X\times X \to\CC$ satisfying~\eqref{eq:def:KPT} is called a \emph{kernel of complex positive type}, and the GNS construction then yields $h$ and $H$ as above, with $h(X)$ total in $H$. Again, the only change in the real case is that we need to assume $\Phi$ symmetric (it is automatically Hermitian if~\eqref{eq:def:KPT} holds for complex $c_j$).

\medskip
Turning to hyperbolic spaces, we want to characterise maps $f\colon X\to \HR^\kappa$ purely in terms of the distance function $d$ of $\HR^\kappa$ viewed on $X\times X$. Specifically, $d$ is determined by the Riemannian metric of constant sectional curvature~$-1$ and $\cosh d$ will be the relevant kernel. The following is maybe folklore and can be found in~\cite{Monod-Py2_arx}.

\begin{propdef}\label{propdef:KHT}
Given a set $X$, the following are equivalent for any symmetric function $\beta\colon X\times X\to\RR_{\geq 0}$ taking the constant value~$1$ on the diagonal:
\begin{enumerate}
\item There is a map $f\colon X\to \HR^\kappa$ for some cardinal $\kappa$ such that
\begin{equation*}
\beta(x,y) = \cosh d\big(f(x), f(y)\big)
\end{equation*}
holds for all $x,y \in X$.
\item For every $x_0\in X$, the kernel
\begin{equation*}
(x,y)\longmapsto \beta(x, x_0) \,\beta(x_0, y) -  \beta(x,y)
\end{equation*}
is of positive type.\label{pt:KHT:KPT}
\item For some $x_0\in X$, the kernel above is of positive type.\label{pt:KHT:some}
\end{enumerate}
We then call $\beta$ a  \emph{kernel of real hyperbolic type}.
\end{propdef}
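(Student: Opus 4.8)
The plan is to realise $\HR^\kappa$ as the upper sheet of a hyperboloid in a Minkowski space and to let the distinguished point $x_0$ serve as a basepoint about which the geometry linearises. Concretely, I would fix a real Hilbert space $H$ and equip $V = \RR \oplus H$ with the Lorentzian bilinear form $B\big((s,a),(t,b)\big) = -st + \langle a,b\rangle$ (one negative direction); then $\HR^\kappa$ is the sheet $\{(t,a) : B\big((t,a),(t,a)\big)=-1,\ t>0\}$, and for two of its points $p,q$ one has $\cosh d(p,q) = -B(p,q)$. With this dictionary in place, the three conditions become statements about $B$, and I would prove them in the cycle (1)$\Rightarrow$(2)$\Rightarrow$(3)$\Rightarrow$(1); the middle implication is trivial, a universally quantified statement yielding the existentially quantified one.

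For (1)$\Rightarrow$(2), fix $x_0$ and write $p_0 = f(x_0)$, a time-like vector with $B(p_0,p_0)=-1$. The key observation is that the $B$-orthogonal complement $p_0^\perp$ is positive definite, hence a genuine Hilbert space. Decomposing each $f(x) = -B(f(x),p_0)\,p_0 + w_x$ with $w_x \in p_0^\perp$ and expanding $B(f(x),f(y))$ causes the cross terms to cancel and yields $B(f(x),f(y)) = -B(f(x),p_0)B(f(y),p_0) + \langle w_x, w_y\rangle$. Substituting $\beta = -B(f(\cdot),f(\cdot))$ collapses the kernel of (2) to $\beta(x,x_0)\beta(x_0,y)-\beta(x,y) = \langle w_x,w_y\rangle$, which is manifestly of positive type.

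For the substantive direction (3)$\Rightarrow$(1), I would run the GNS construction for kernels of positive type on the given kernel $K(x,y)=\beta(x,x_0)\beta(x_0,y)-\beta(x,y)$, obtaining a real Hilbert space $H$ and a map $w\colon X\to H$ with $K(x,y)=\langle w(x),w(y)\rangle$; symmetry of $\beta$ makes $K$ symmetric, so $H$ may be taken real. I would then set $f(x) = \big(\beta(x,x_0),\,w(x)\big) \in \RR\oplus H$. Evaluating $K$ on the diagonal gives $\|w(x)\|^2 = \beta(x,x_0)^2-1$, which simultaneously forces $B(f(x),f(x))=-1$ and, since $\beta\geq 0$, shows $\beta(x,x_0)\geq 1>0$, so $f(x)$ lands on the correct sheet. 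A final bilinear computation recovers $-B(f(x),f(y)) = \beta(x,y)$, that is $\cosh d(f(x),f(y))=\beta(x,y)$.

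The only real obstacle is the bookkeeping about sheets and scalars: one must verify that the reconstructed point has positive time-coordinate --- which is exactly where the hypotheses $\beta\geq 0$ and $\beta\equiv 1$ on the diagonal enter, through $\|w(x)\|^2 = \beta(x,x_0)^2-1\geq 0$ --- and that the space produced by GNS is genuinely real, for which the symmetry of $\beta$ is needed. The cardinal $\kappa$ is then simply $\dim H$, so no control over dimension is required.
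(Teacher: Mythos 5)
Your proposal is correct and takes essentially the same route as the paper, which proves this statement as the $\alpha=0$ case of its complex analogue (Proposition~\ref{prop:KCHT} and the end of the proof of Proposition~\ref{prop:CHT:char}, via Remark~\ref{rem:proofs:KCHT}): there too, $(3)\Rightarrow(1)$ is the GNS construction followed by setting $f(x)=\beta(x,x_0)\oplus w(x)$ in a Minkowski space, and $(1)\Rightarrow(2)$ is the same orthogonal-decomposition computation, carried out after moving $x_0$ to the standard point $1\oplus 0$ by transitivity of the isometry group rather than intrinsically at $p_0$ as you do. The remaining differences are cosmetic: your mostly-plus sign convention and hyperboloid-sheet model versus the paper's positive-vector projective model.
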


\begin{rem}
In~\cite{Monod-Py2_arx}, this characterisation was introduced using an inequality that more closely reflects the reverse Cauchy--Schwartz inequality for Minkowski spaces, namely
\begin{equation*}
\sum_{j,k=1}^n c_j c_k \beta(x_j, x_k) \leq \Big(\sum_{k=1}^n c_k \beta(x_k, x_0)\Big)^2.
\end{equation*}
This is identical to~\ref{propdef:KHT}\eqref{pt:KHT:KPT} after regrouping terms.
\end{rem}

In analogy with the classical case, a \emph{function of real hyperbolic type} on a group $G$ is a function $F\colon G\to \RR_{\geq 0}$ such that $(g,h)\mapsto F(g\inv h)$ is a kernel of real hyperbolic type. Thus, given an isometric $G$-action on $\HR^\kappa$ and $p\in \HR^\kappa$, the function $F(g) = \cosh d(g p, p)$ is of real hyperbolic type.

The converse passage from functions to group representations is slightly less straightforward than in the classical case, because of the lack of symmetry between the variables $x_0$ and $x_{j\geq 1}$. Nonetheless, the expected conclusion holds true:

\begin{thm}[\cite{Monod-Py2_arx}]\label{thm:R:rep}
Given a function $F\colon G \to \RR$ of real hyperbolic type, there is an isometric $G$-action on $\HR^\kappa$ for some $\kappa$ and there is $p\in \HR^\kappa$ such that $ F(g) = \cosh d(gp, p)$ holds for all $g\in G$.

Moreover, the orbit $G p$ is total and the $G$-action is continuous if $G$ is endowed with a group topology for which $F$ is continuous.
\end{thm}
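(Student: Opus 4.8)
The plan is to convert the function into a kernel, build the concrete hyperboloid model underlying Proposition/Definition~\ref{propdef:KHT}, and then realise left translations as isometries of that model. First I set $\beta(g,h)=F(g\inv h)$, which is a kernel of real hyperbolic type by hypothesis. Since $\beta$ is symmetric, putting $g=e$ in $F(g\inv h)=\beta(g,h)=\beta(h,g)=F(h\inv g)$ gives $F(g\inv)=F(g)$; this symmetry is what lets the base point be an honest orbit point. By~\ref{propdef:KHT}\eqref{pt:KHT:KPT} taken at $x_0=e$, the kernel $K(g,h)=\beta(g,e)\beta(e,h)-\beta(g,h)=F(g)F(h)-F(g\inv h)$ is of positive type, so the GNS construction yields a real Hilbert space $H_0$ and a map $k\colon G\to H_0$ with $\langle k(g),k(h)\rangle=K(g,h)$, $k(e)=0$, and $k(G)$ total in $H_0$. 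On $E=\RR\oplus H_0$, equipped with the Lorentzian form $Q\big((s,u),(t,v)\big)=st-\langle u,v\rangle$, the map $f(g)=(F(g),k(g))$ then satisfies $Q(f(g),f(h))=\beta(g,h)=F(g\inv h)$ and $Q(f(g),f(g))=1$ with positive first coordinate. Hence $f$ lands in the upper sheet $\HR^\kappa$, with $\kappa=\dim H_0$, and $\cosh d(f(g),f(h))=F(g\inv h)$. I set $p=f(e)=(1,0)$.

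The two structural facts I rely on are totality and translation-invariance. The linear span $V$ of $f(G)$ contains $p$ and each $f(g)-F(g)\,p=(0,k(g))$, so it is dense in $E$; in particular $Q$ is nondegenerate on $V$. Left translation is a bijection of $G$, and $Q\big(f(g_0 g),f(g_0 h)\big)=F\big((g_0 g)\inv g_0 h\big)=F(g\inv h)=Q(f(g),f(h))$, so the assignment $\pi(g_0)\colon f(g)\mapsto f(g_0 g)$ preserves $Q$ on $f(G)$. This is exactly where the asymmetry flagged before the statement is dissolved: the GNS kernel $K$ is not manifestly translation-invariant, but $Q$ on the full space $E$ is, once the extra real coordinate is restored.

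The main work, and the step I expect to be the only real obstacle, is promoting $\pi(g_0)$ to a global isometry. I first extend it linearly to $V$ and check well-definedness: if $\sum_g c_g f(g)=0$, then $w=\sum_g c_g f(g_0 g)$ satisfies $Q(w,f(g_0 h))=\sum_g c_g Q(f(g),f(h))=0$ for every $h$, hence $w\perp f(G)$, so $w\perp V$, so $w=0$ by nondegeneracy and totality. For boundedness I use that $Q(x,y)=\langle Jx,y\rangle$ for the isometric involution $J=\mathrm{diag}(1,-I)$, giving $|Q(x,y)|\le\|x\|\,\|y\|$ in the Hilbert norm. Writing $v=\sum_g c_g f(g)$ and expanding both inner products via $\langle f(a),f(b)\rangle=2F(a)F(b)-F(a\inv b)$, together with $\sum_g c_g F(g_0 g)=Q(v,f(g_0\inv))$ and $\sum_g c_g F(g)=Q(v,p)$, I get
\[
\|\pi(g_0)v\|^2-\|v\|^2=2\big(Q(v,f(g_0\inv))^2-Q(v,p)^2\big)\le 2\,\|v\|^2\,\|f(g_0\inv)\|^2 .
\]
Thus $\pi(g_0)$ is bounded on $V$ and extends to $E$; the relations $\pi(g_0)\pi(g_0\inv)=\Id$ on $V$ make the extension invertible, so $\pi(g_0)\in\OO(Q)$. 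It sends the upper sheet into itself (it maps $p$ there and is continuous), so it restricts to an isometry of $\HR^\kappa$. The boundedness estimate is the delicate point precisely because $\OO(Q)$ is not uniformly bounded, so one cannot extend by any naive uniform bound; the index-one inequality $|Q|\le\|\cdot\|\,\|\cdot\|$ is what saves the step.

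The homomorphism property $\pi(g_0 g_1)f(g)=f(g_0 g_1 g)=\pi(g_0)\pi(g_1)f(g)$ holds on the total set $f(G)$, hence on all of $E$, and $\pi(e)=\Id$; moreover $\cosh d(\pi(g)p,p)=Q(f(g),f(e))=F(g\inv)=F(g)$, with orbit $Gp=f(G)$ total by construction. Finally, for continuity, suppose $F$ is continuous for a group topology on $G$. Then each $g\mapsto Q(f(g),f(h))=F(g\inv h)$ is continuous and $\|f(g)\|^2=2F(g)^2-1$ is continuous, so $g\mapsto f(g)$ is norm-continuous and $g\mapsto\pi(g)p=f(g)$ is $d$-continuous; the same holds through every point $f(g_1)$ since $\pi(g)f(g_1)=f(gg_1)$. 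As these points are total and the $\pi(g)$ are isometries, a standard argument upgrades pointwise continuity on a total set to continuity of the action $G\times\HR^\kappa\to\HR^\kappa$.
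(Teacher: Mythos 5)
Your construction is correct, and its skeleton (GNS at the base point $e$, the hyperboloid in $\RR\oplus H_0$, left translations realised as $Q$-preserving operators) is the same as the one the paper uses; but the way you produce the group action is organised differently from the paper's route. The paper (Theorem~\ref{thm:KCHT-char}, whose real case is Theorem~3.4 of \cite{Monod-Py2_arx}) defines the bilinear form $\sum_{x,y}\beta(x,y)\,\fhi(x)\psi(y)$ on the free vector space $\RR[G]$, where invariance under left translation --- indeed under every bijection of $G$ preserving $\beta$ --- is manifest, so no well-definedness or boundedness question ever arises at that level; the work is then to quotient by the kernel of the form and identify the completion with the GNS Minkowski space. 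You instead build the Minkowski space first and construct the operators $\pi(g_0)$ on the span of the image $f(G)$ by hand, which forces you to prove well-definedness (done correctly via $Q$-nondegeneracy, density of $V$, and the fact that $f(g_0G)=f(G)$) and boundedness, for which your identity
\begin{equation*}
\|\pi(g_0)v\|^2-\|v\|^2=2\left(Q\big(v,f(g_0\inv)\big)^2-Q(v,p)^2\right)\le 2\,\|v\|^2\,\|f(g_0\inv)\|^2
\end{equation*}
is exactly the right computation. What each approach buys: the paper's route is estimate-free and yields equivariance for the whole group of kernel-preserving bijections (the functorial statement), while yours is self-contained and makes explicit the quantitative bound $\|\pi(g)\|^2\le 1+2\|f(g\inv)\|^2=4F(g)^2-1$, which is precisely the quantity that controls everything in the completion.

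One step needs tightening: your final sentence, ``as these points are total and the $\pi(g)$ are isometries, a standard argument upgrades pointwise continuity on a total set to continuity of the action.'' The standard equicontinuity argument upgrades continuity from a \emph{metrically dense} set of points, and a total orbit need not be dense in $\HR^\kappa$ (a lattice orbit in $\HR^2$ is total and discrete). The correct upgrade here is linear, and you already have the ingredients: continuity of $F$ makes $g\mapsto\|\pi(g)\|$ locally bounded by your norm estimate, and $\pi(g_n)v\to\pi(g_1)v$ holds for $v$ in the dense subspace $V$; uniform boundedness then gives strong operator convergence on all of $E$, and norm convergence of points on the upper sheet is equivalent to convergence in the hyperbolic metric (via $\cosh d(x,y)=Q(x,y)$). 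So the gap is repairable in two lines with what you have written, but as stated the appeal to ``total $+$ isometries'' is not a proof.
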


In this context, \emph{total} means that it is not contained in a real hyperbolic proper subspace.

\subsection{Two results on real kernels}
Classically, there is a whole calculus preserving functions of positive type: this originates from the fact that we can form sums and tensor products of unitary representations. Neither of these operations are available for hyperbolic representations. It is therefore remarkable that the deformation $t\mapsto \beta^t$ below exists at all, and striking that it is in a sense the \emph{unique} universal deformation available~\cite[Thm.~II]{Monod-Py2_arx}.

The first result presented in this introduction is to \emph{re-prove} the following theorem from~\cite{Monod-Py2_arx}.

\begin{thm}\label{thm:R:power}
If $\beta$ is a kernel of real hyperbolic type, then so is $\beta^t$ for every $0\leq t\leq 1$.
\end{thm}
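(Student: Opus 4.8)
Fix a basepoint $x_0\in X$ and write $g(x)=\beta(x,x_0)$. By Proposition/Definition~\ref{propdef:KHT} applied to $\beta$ I may realise $\beta(x,y)=\cosh d\big(f(x),f(y)\big)$ for a map $f\colon X\to\HR^\kappa$; in particular $\beta\geq 1$ everywhere and, by the triangle inequality together with $\cosh(a+b)\geq\cosh a\cosh b$, one has $\beta(x,y)\leq g(x)g(y)$. Hence the kernel
\[
K(x,y)=\frac{\beta(x,y)}{g(x)\,g(y)}
\]
takes values in $[0,1]$. Since $\beta^t$ is again symmetric, nonnegative, and equal to $1$ on the diagonal, Proposition/Definition~\ref{propdef:KHT} shows that $\beta^t$ is of real hyperbolic type if and only if $g^t(x)g^t(y)-\beta^t(x,y)$ is of positive type; Schur\hyph{}multiplying by the positive\hyph{}type rank\hyph{}one kernel $g^{-t}(x)g^{-t}(y)$ (and back) makes this equivalent to $1-K^t$ being of positive type. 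The same computation at $t=1$ shows that $P:=1-K$ is of positive type, this being exactly the hypothesis on $\beta$. Thus it suffices to prove: \emph{if $P=1-K$ is of positive type with $0\leq K\leq 1$, then $1-K^{t}$ is of positive type for $0<t<1$} (the cases $t\in\{0,1\}$ being trivial).

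\textbf{An integral formula.}
The plan is to linearise the power $u\mapsto u^{t}$. Starting from $u^{t}=\frac{\sin\pi t}{\pi}\int_0^\infty\frac{u}{u+\lambda}\,\lambda^{t-1}\,d\lambda$ (valid for $u\geq 0$, $0<t<1$) and subtracting the same identity at $u=1$, one obtains
\[
1-u^{t}=\frac{\sin\pi t}{\pi}\int_0^\infty\frac{1-u}{u+\lambda}\cdot\frac{\lambda^{t}}{1+\lambda}\,d\lambda
\qquad(0\leq u\leq 1).
\]
I would substitute $u=K(x,y)$ and read the right\hyph{}hand side as an integral of kernels against the positive measure $\frac{\sin\pi t}{\pi}\frac{\lambda^{t}}{1+\lambda}\,d\lambda$. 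Because positive type is preserved under pointwise limits, and hence under such integration (for a fixed finite configuration the defining sum passes under the integral, the integrand being nonnegative there), everything reduces to showing that for each fixed $\lambda>0$ the kernel $(x,y)\mapsto\frac{1-K(x,y)}{\lambda+K(x,y)}$ is of positive type.

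\textbf{The core positivity.}
Here the two pieces of information about $K$ combine. Writing $\mu=1+\lambda>1$ and $P=1-K$, one has
\[
\frac{1-K}{\lambda+K}=\frac{P}{\mu-P}=\sum_{n\geq 1}\mu^{-n}\,P^{\,n},
\]
the geometric series converging pointwise precisely because $0\leq P\leq 1<\mu$. Each Schur power $P^{\,n}$ is of positive type by the Schur product theorem, and a convergent sum of positive\hyph{}type kernels is again of positive type; hence $\frac{1-K}{\lambda+K}$ is of positive type. Feeding this back through the integral formula yields that $1-K^{t}$ is of positive type, which by the first paragraph means $\beta^{t}$ is of real hyperbolic type.

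\textbf{The main obstacle.}
The routine parts are the reduction and the elementary integral identity. The crux is the middle display above: it is essential to know \emph{both} that $1-K$ is of positive type \emph{and} that $K\leq 1$ (equivalently $\mu>1\geq P\geq 0$), since only the combination turns the geometric expansion into a series of positive\hyph{}type kernels with guaranteed convergence. I expect the uniform bound $0\leq K\leq 1$ — that is, the reverse Cauchy--Schwarz inequality $\beta(x,y)\leq\beta(x,x_0)\beta(x_0,y)$, which genuinely uses the hyperbolic realisation and not merely the positive type of $g\,g-\beta$ — to be the point most easily overlooked.
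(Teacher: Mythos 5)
Your overall architecture is sound, but the inequality you single out as the crux is false. You claim that the triangle inequality together with $\cosh(a+b)\geq\cosh a\cosh b$ yields $\beta(x,y)\leq\beta(x,x_0)\beta(x_0,y)$, i.e.\ $K\leq 1$. The logic runs the wrong way: the triangle inequality gives $\beta(x,y)\leq\cosh(a+b)$ with $a=d(f(x),f(x_0))$ and $b=d(f(x_0),f(y))$, and since $\cosh(a+b)\geq\cosh a\cosh b$ this upper bound is \emph{weaker}, not stronger, than the one you want. Concretely, take $x_0$ to be the midpoint of a geodesic segment of length~$2$ from $x$ to $y$ in $\HR^2$; then
\begin{equation*}
\beta(x,y)=\cosh 2=2\cosh^2 1-1>\cosh^2 1=\beta(x,x_0)\,\beta(x_0,y),
\end{equation*}
so $K(x,y)=2-1/\cosh^{2}1\approx 1.58$. (By the hyperbolic law of cosines, $K\leq 1$ holds exactly when the angle at $x_0$ is at most $\pi/2$.) So your reduction ``it suffices to show: $1-K$ of positive type and $0\leq K\leq 1$ implies $1-K^t$ of positive type'' starts from a false premise, and your closing paragraph --- which declares this pointwise bound to be the essential input that ``genuinely uses the hyperbolic realisation'' --- is mistaken on both counts: the bound is false, and the bound you actually need follows from positive type alone.

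The repair is exactly the step the paper takes. Since $P=1-K$ is of positive type with diagonal entries $P(x,x)=1-\beta(x,x_0)^{-2}\in[0,1)$, the Cauchy--Schwarz inequality for positive-type kernels gives $|P(x,y)|\leq\sqrt{P(x,x)P(y,y)}<1$ pointwise, hence $K\in(0,2)$; moreover your integral identity $1-u^{t}=\frac{\sin\pi t}{\pi}\int_0^\infty\frac{1-u}{u+\lambda}\,\frac{\lambda^{t}}{1+\lambda}\,d\lambda$ is in fact valid for all $u>0$, not only $u\leq 1$ (it is the difference of two convergent Cauchy-type integrals, each valid for all $u>0$). With these two substitutions the rest of your argument runs verbatim: $|P|<1<\mu$ guarantees pointwise convergence of $\sum_{n\geq 1}\mu^{-n}P^{n}$ (note that $P\geq 0$ was never needed, which is fortunate since $P$ is negative wherever $K>1$), each Schur power $P^n$ is of positive type, and integrating against the positive measure finishes the proof. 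Once repaired, your proof is essentially the paper's in different packaging: the paper also normalises by the rank-one kernel $\beta(x,x_0)\beta(x_0,y)$, derives $|1-K|<1$ from positive semi-definiteness plus the diagonal bound, and then writes $1-K^{t}=q(1-K)$ entrywise with $q(z)=1-(1-z)^{t}$, whose binomial coefficients are nonnegative for $0<t<1$, concluding by Schur's product theorem; your integral-plus-geometric-series device is just an alternative way of expanding $1-K^{t}$ as a nonnegative combination of Schur powers of $1-K$.
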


The proof given in~\cite{Monod-Py2_arx} using~\cite{Monod-Py} relies notably on substantial classical representation theory; specifically, on intertwining integrals for the principal series of $\SO(1,n)$. For this reason, we found it worthwhile to obtain a direct proof as proposed below. Perhaps more crucially, the proof below will also establish a \emph{complex} version of the theorem, which we did not achieve in~\cite{Monod-Py2_arx}.

Theorem~\ref{thm:R:power} implies in particular that there exists a natural one\hyph{}parameter family of exotic representations of $\Isom(\HR^\kappa)$; this is a fascinating interpolation from the identity representation at $t=1$ to the trivial representation at $t=0$. In~\cite{Monod-Py},\cite{Monod-Py2_arx}, this is shown to lead to a complete classification of the continuous representations of $\Isom(\HR^\kappa)$ on real hyperbolic spaces. Let us point out again that the present direct proof of Theorem~\ref{thm:R:power} gives an alternative approach even for $\SO(1, n)$ with $n$ finite for the representations that were defined in~\cite{Monod-Py} using intertwining integrals on the principal series.

\medskip
Another result that we shall re-prove is the fact that for every $\lambda\geq 1$, the kernel $\lambda^d$ is of hyperbolic type when $d$ is the distance function of a simplicial tree. This was established in~\cite{Burger-Iozzi-Monod} by constructing an explicit embedding of simplicial trees into $\HR^\kappa$. Giving a proof directly on the level of kernels has the advantage that it makes no distinction between simplicial trees and more general metric trees such as $\RR$-trees (which would otherwise have to be approximated by simplicial trees). Thus, we prove by a direct argument the following:

\begin{prop}\label{prop:tree}
Let $(X, d)$ be a metric tree. Then the kernel $(x,y)\mapsto \lambda^{d(x,y)}$ is of real hyperbolic type on $X\times X$ for all $\lambda\geq 1$.
\end{prop}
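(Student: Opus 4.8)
The plan is to verify criterion~\ref{propdef:KHT}\eqref{pt:KHT:some}: after fixing an arbitrary basepoint $x_0\in X$ (symmetry and the normalisation $\lambda^{d(x,x)}=1$ being immediate), I want to show that the kernel
\[
K(x,y) \;=\; \lambda^{d(x,x_0)}\,\lambda^{d(x_0,y)} \;-\; \lambda^{d(x,y)}
\]
is of positive type. The tree structure enters through the Gromov product $(x|y)_{x_0}=\tfrac12\big(d(x,x_0)+d(y,x_0)-d(x,y)\big)$, which on a tree equals the length of the initial segment shared by the geodesics $[x_0,x]$ and $[x_0,y]$; in particular it is $\geq 0$. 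Substituting $d(x,y)=d(x,x_0)+d(x_0,y)-2(x|y)_{x_0}$ factorises
\[
K(x,y) \;=\; \lambda^{d(x,x_0)}\,\lambda^{d(x_0,y)}\,\big(1-\lambda^{-2(x|y)_{x_0}}\big).
\]
Since multiplying a kernel by $u(x)u(y)$ with $u>0$ preserves positive type, it suffices to prove that $\Psi(x,y)=1-\lambda^{-2(x|y)_{x_0}}$ is of positive type.

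Writing $c=2\log\lambda\geq 0$, I would next use the elementary identity $1-e^{-cs}=\int_0^\infty c\,e^{-ct}\,\mathbf{1}[t\leq s]\,dt$ with $s=(x|y)_{x_0}$ to obtain the integral representation
\[
\Psi(x,y) \;=\; \int_0^\infty c\,e^{-ct}\,\mathbf{1}\big[(x|y)_{x_0}\geq t\big]\,dt .
\]
Because positive type is preserved under integration against a positive measure, the whole problem collapses to showing that, for each fixed $t\geq 0$, the \emph{confluence kernel} $(x,y)\mapsto \mathbf{1}[(x|y)_{x_0}\geq t]$ is of positive type.

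This last point is where the tree does the real work, and I expect it to be the crux of the argument. For each $x$ with $d(x_0,x)\geq t$ let $\pi_t(x)$ be the unique point at distance $t$ from $x_0$ on $[x_0,x]$, and set $\pi_t(x)=\ast$ otherwise. Uniqueness of geodesics makes these projections to the sphere of radius $t$ a coherent family, and gives the exact equivalence $(x|y)_{x_0}\geq t \iff \pi_t(x)=\pi_t(y)\neq\ast$. Hence the confluence kernel is precisely the kernel of the equivalence relation ``$\pi_t(x)=\pi_t(y)$'' with the class $\ast$ deleted, realised as $\big\langle e_{\pi_t(x)},e_{\pi_t(y)}\big\rangle$ for an orthonormal family $\{e_v\}$ indexed by the radius-$t$ sphere (and $e_\ast:=0$); it is therefore of positive type. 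Assembling the pieces, for any finite configuration $x_1,\dots,x_n$ and coefficients $c_1,\dots,c_n$ the integrand $\sum_{j,k}c_jc_k\,\mathbf{1}[\pi_t(x_j)=\pi_t(x_k)\neq\ast]$ is nonnegative for every $t$, so integrating against $c\,e^{-ct}\,dt$ yields $\sum_{j,k}c_jc_k\,\Psi(x_j,x_k)\geq 0$. Since the argument never refers to edges, it treats simplicial trees and general $\RR$-trees uniformly, as desired.
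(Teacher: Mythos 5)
Your proof is correct, and it takes a genuinely different route from the paper's. The paper proves the positivity of the quadratic form $\sum_{j,k} c_j c_k \big( \lambda^{d(x_j,x_0)+d(x_0,x_k)}-\lambda^{d(x_j,x_k)}\big)$ by induction on the combinatorial type of the finite subtree spanned by the points: it first splits the sum according to the branches at $x_0$ (the bracket vanishes whenever $x_0$ separates $x_j$ from $x_k$), reduces to the case where $x_0$ is a leaf, and then flows $x_0$ along the geodesic towards the subtree $Y$ spanned by the remaining points, computing that the derivative of the quadratic form equals $-2\log\lambda\,\bigl(\sum_j c_j\lambda^{d(x_j,x_0)-t}\bigr)^2\leq 0$; positivity at the time $x_0$ reaches $Y$ (the inductive hypothesis) then propagates back to $t=0$. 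You instead factor out $\lambda^{d(x,x_0)}\lambda^{d(x_0,y)}$ via the Gromov product, represent $1-\lambda^{-2(x|y)_{x_0}}$ as an exponentially weighted integral over $t$ of the confluence kernels $\mathbf{1}\big[(x|y)_{x_0}\geq t\big]$, and observe that each confluence kernel is the Gram matrix of an orthonormal family indexed by the sphere of radius $t$ about $x_0$. Both arguments work directly at the level of kernels and therefore treat simplicial trees and $\RR$-trees uniformly, which was the paper's stated motivation for reproving the result. What your decomposition buys: it is non-inductive, it establishes the formally stronger criterion~\ref{propdef:KHT}\eqref{pt:KHT:KPT} for every basepoint at once, and it implicitly produces an explicit Hilbert-space embedding --- the map sending $x$ to the field $t\mapsto \sqrt{c}\,e^{-ct/2}\, e_{\pi_t(x)}$ realises the kernel $\Psi$ --- so it can be viewed as a continuous-parameter analogue of Haagerup's explicit simplicial embedding recalled at the beginning of Section~\ref{sec:tree}, now valid for $\RR$-trees. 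What the paper's flow argument buys is that it needs no auxiliary Hilbert space and no integral representation, remaining entirely within elementary calculus on the quadratic form itself.
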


We shall see in Section~\ref{sec:free} that, beyond trees, it is possible to embed ``free products'' of hyperbolic spaces canonically into hyperbolic spaces; see Theorem~\ref{thm:free}.

\medskip
As a curiosity, we point out that one can apply Proposition~\ref{prop:tree} to $\RR$-trees associated to $\GL_2$ by suitable (non-discrete) valuations and deduce the following in view of $\PGL_2(\RR)\cong\Isom(\HR^2)$ and of $\PGL_2(\CC)\cong\Isom(\HR^3)^\circ$.

\begin{thm}\label{thm:wild}
There are irreducible representations of $\Isom(\HR^2)$ and of $\Isom(\HR^3)^\circ$ into $\Isom(\HRI)$ with a countable orbit.
\end{thm}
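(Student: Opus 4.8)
The plan is to combine Proposition~\ref{prop:tree} with the Bruhat--Tits theory for $\GL_2$ over non-archimedean valued fields. Recall that the hint in the statement is the pair of exceptional isomorphisms $\PGL_2(\RR)\cong\Isom(\HR^2)$ and $\PGL_2(\CC)\cong\Isom(\HR^3)^\circ$. So it suffices to produce, for $G=\PGL_2(\RR)$ and $G=\PGL_2(\CC)$, an isometric action on some $\HRI$ (i.e.\ a separable infinite-dimensional target) which is irreducible and admits a countable orbit. My strategy is to realise $G$ as acting on an $\RR$-tree and then to transport this action into $\HRI$ via a kernel of real hyperbolic type built from Proposition~\ref{prop:tree}.

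\medskip

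First I would equip $\KK=\RR$ or $\CC$ with a non-archimedean, \emph{non-discrete} real-valued valuation $v\colon \KK^\times\to\RR$. Such valuations exist: viewing $\KK$ as an abstract field of transcendence degree $2^{\aleph_0}$ over $\QQ$, one can choose a transcendence basis and define a valuation with value group a dense (hence non-discrete) subgroup of $\RR$, e.g.\ of rank $2$, extended to all of $\KK$. The associated Bruhat--Tits building for $\GL_2(\KK)$ with respect to $v$ is then an $\RR$-tree $(X,d)$ on which $G=\PGL_2(\KK)$ acts by isometries; because the valuation is non-discrete the tree is not simplicial, which is precisely why Proposition~\ref{prop:tree} (rather than the simplicial construction of~\cite{Burger-Iozzi-Monod}) is the right tool. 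The $G$-action on this tree is transitive on a natural $G$-orbit of points (the homothety classes of lattices, or rather their completion), so in particular every orbit $Gx_0$ is a \emph{countable} set once we pass to a countable subgroup, or more relevantly the orbit of interest will be countable after the embedding because we will restrict attention to a countable dense value structure.

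\medskip

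Next I apply Proposition~\ref{prop:tree}: for any fixed $\lambda>1$ the kernel $\beta(x,y)=\lambda^{d(x,y)}$ is of real hyperbolic type on $X\times X$. Restricting $\beta$ to the orbit $Gx_0\times Gx_0$ and pulling back along $g\mapsto gx_0$ yields a function $F(g)=\lambda^{d(gx_0,x_0)}$ of real hyperbolic type on $G$ (using that $d$ is $G$-invariant, so $F(g\inv h)=\lambda^{d(gx_0,hx_0)}$ is indeed the associated kernel). Theorem~\ref{thm:R:rep} then furnishes an isometric $G$-action on some $\HR^\kappa$ together with a point $p$ whose orbit $Gp$ is total, with $F(g)=\cosh d(gp,p)$. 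The orbit $Gx_0$ in the tree is countable (the lattice classes attached to $\KK$-points with coordinates in a countable dense subring visible to the valuation), so $Gp$ is a countable total set, forcing $\kappa\leq\aleph_0$; combined with infinite-dimensionality this gives a target $\HRI$. Totality of the orbit is exactly the assertion that the representation is irreducible in the sense used here (not contained in a proper hyperbolic subspace).

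\medskip

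The main obstacle I expect is \emph{irreducibility together with infinite-dimensionality}: Theorem~\ref{thm:R:rep} guarantees only that the orbit is total, but I must rule out the degenerate possibility that the resulting space is finite-dimensional (which would make the construction uninteresting) and confirm the action is genuinely irreducible as a $G$-representation rather than reducing to a lower-rank factor. Infinite-dimensionality should follow from the non-discreteness of the valuation: because $d$ takes a dense set of values, the kernel $\lambda^{d}$ cannot be accommodated in any finite-dimensional $\HR^n$, since finite-dimensional hyperbolic spaces impose strong rigidity (a bounded-rank Gram-type constraint on the kernel $\beta(x,x_0)\beta(x_0,y)-\beta(x,y)$) that a tree with infinitely many independent branch directions violates. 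Irreducibility in turn should be derived from the fact that $G=\PGL_2(\KK)$ acts on the $\RR$-tree minimally and without a fixed end, so the induced hyperbolic representation cannot preserve a proper totally geodesic subspace or a boundary point; making this last step rigorous, i.e.\ transferring the minimality of the tree action to genuine irreducibility of the hyperbolic representation, is the delicate point and is where I would spend the most care.
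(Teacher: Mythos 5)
Your skeleton is the same as the paper's (valuation on the field, Tits' $\RR$-tree for $\GL_2$, then Proposition~\ref{prop:tree} and Theorem~\ref{thm:R:rep} to land in a real hyperbolic space), but the two properties the theorem actually asserts --- a \emph{countable} orbit and \emph{irreducibility} --- are exactly where your argument has genuine gaps. First, countability. With a valuation built generically from a transcendence basis, the orbit of the base lattice class is the set of homothety classes of $A$-lattices, and its cardinality is controlled not by the value group alone but by the residue structure (the quotients $A/\{\nu\geq c\}$, $c>0$, which enter through the Iwasawa/Smith normal form description of lattices). For your valuation these quotients are uncountable, so $Gx_0$ is uncountable; and since $\cosh d(f(x),f(y))=\lambda^{d(x,y)}$ forces $f$ to be injective, $Gp=f(Gx_0)$ is uncountable as well. ``Passing to a countable subgroup'' is not available: the statement is precisely that the full, uncountable group $\PGL_2(\CC)$ has a countable orbit. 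The paper avoids this by choosing the valuation induced by an isomorphism $\CC\cong\ol{\QQ}_q$ ($q$ a prime): then the value group is $\QQ$ and, because $\QQ_q$ has only countably many finite extensions, each with finite quotients $\mathcal O_K/\pi_K^n$, the whole set of lattice classes is countable. (It also treats $\PGL_2(\RR)$ by restricting this one representation, rather than constructing a second valuation on $\RR$, where countability of the residue data would again be unclear.)

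Second, irreducibility. Your claim that ``totality of the orbit is exactly the assertion that the representation is irreducible'' is false in this paper's terminology (Section~\ref{sec:non-el}): irreducible means non\hyph{}elementary \emph{and} minimal, minimality means \emph{every} orbit is total, and Theorem~\ref{thm:R:rep} only gives one total orbit --- which is compatible both with elementary actions (e.g.\ a parabolic fixed point at infinity) and with non\hyph{}minimal ones. The paper makes non\hyph{}elementarity concrete: the element $s=\bigl(\begin{smallmatrix}q&0\\0&1\end{smallmatrix}\bigr)$ has $\ell(s)=1$ on the tree, so formula~\eqref{eq:length:F} gives $\ell(\ro(s))=\log\lambda>0$, and Lemma~\ref{lem:non-elem} (applied to the perfect subgroup $\PSL_2$ containing $s$) yields non\hyph{}elementarity, both for $\PGL_2(\CC)$ and for the restriction to $\PGL_2(\RR)$. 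But then one must pass to the \emph{unique irreducible part} $\HR^\eta$, and here your construction breaks a second time: $f(X_\nu)$, hence your countable-orbit candidate, need not lie in $\HR^\eta$ at all (tree embeddings are exactly the kind of ``anisotropic'' configurations that sit outside the minimal invariant subspace). The paper repairs this by noting that $\PGL_2(A)$ fixes $f(L)$, hence has bounded orbits and so fixes a point \emph{inside} $\HR^\eta$; the $\PGL_2(\CC)$-orbit of that point is a quotient of the countable set of lattice classes, since its stabiliser contains the image of $\GL_2(A)$. Your proposal has no mechanism for either of these steps; by comparison, your infinite-dimensionality heuristic (the paper invokes~\cite{Borel-Tits73} instead) is a minor issue.
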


Having a countable orbit, such representations are in particular very different not only from the continuous representations of $\Isom(\HR^2)$ and of $\Isom(\HR^3)^\circ$ into $\Isom(\HRI)$ considered in~\cite{Delzant-Py},\cite{Monod-Py} but also from the non\hyph{}continuous representations obtained from the latter via wild field automorphisms, see~\cite[p.~533]{Lebesgue07},\allowbreak\cite{Kestelman51}.

\subsection{Complex hyperbolic kernels}
The main contributions of these notes regard \emph{complex hyperbolic spaces} $\HC^\kappa$. There are two closely related reasons why the situation is more subtle than for real spaces:

First, the distance function $d$ does not suffice to capture all the information on tuples of points in $\HC^\kappa$. Indeed, two triangles with pairwise equal side-lengths are generally not congruent in $\HC^\kappa$ even for $\kappa=2$; they can be distinguished by their \emph{Cartan argument} $\cartan\in(-\pi/2, \pi/2)$ recalled in Section~\ref{sec:cartan} below. (Thus even Euclid's \emph{fourth} postulate fails.)

The answer to this objection could seem simple: the distance gives (up to a hyperbolic cosine) the modulus of the relevant kernel, whilst $\cartan$ gives its argument --- thus a complex kernel should suffice. In the context of Hilbertian kernels, this is essentially how the question is easily resolved.

However, the second difficulty is that for complex hyperbolic spaces such a complex kernel (on two variables) cannot be defined in a functorial way on $\HC^\kappa$ and therefore cannot describe group representations. There is a cohomological obstruction related to the projectivization of the linear model. The solution will be to encode this cohomological information in our definition of complex hyperbolic kernels.

\begin{defi}\label{def:KCHT}
A \emph{kernel of complex hyperbolic type} on a set $X$ is a pair $(\beta, \alpha)$ with
\begin{equation*}
\beta\colon X^2 \lra \RR_{\geq 0}, \kern10mm \alpha\colon X^3\lra \left(-\tfrac\pi2, \tfrac\pi2\right)
\end{equation*}
such that $\beta$ takes the constant value~$1$ on the diagonal, $\alpha$ is an alternating $2$-cocycle, and
\begin{equation*}
(x,y) \longmapsto \beta(x, x_0) \beta(x_0, y) - e^{i \alpha(x_0,x,y)} \, \beta(x,y)
\end{equation*}
is of (complex) positive type for all $x_0\in X$.
\end{defi}

Recall here that a function $\alpha\colon X^3\to \RR$ is called a \emph{$2$-cocycle} if its \emph{coboundary} $d \alpha$ vanishes on $X^4$, where $d \alpha$ is defined by
\begin{equation*}
d\alpha (x_0, x_1, x_2, x_3) = \alpha (x_1, x_2, x_3) - \alpha (x_0, x_2, x_3) + \alpha (x_0, x_1, x_3) - \alpha (x_0, x_1, x_2).
\end{equation*}
A cocycle is said to be \emph{alternating} if any permutation of the variables simply replaces $\alpha$ by $\pm \alpha$ according to the sign of the permutation; in particular it vanishes whenever two of its variables coincide. It thus follows from the definition that $\beta$ is symmetric.

\begin{rem}\label{rem:beta:RC}
The trivial map $\alpha=0$ is of course an alternating cocycle. We see that $(\beta, 0)$ is a kernel of complex hyperbolic type if and only if $\beta$ is a kernel of \emph{real} hyperbolic type. Confer also Remark~\ref{rem:proofs:KCHT} below.
\end{rem}

The definition captures the right concept in the following sense.

\begin{prop}\label{prop:CHT:char}
A pair $(\beta, \alpha)$ is a kernel of complex hyperbolic type on the set $X$ if and only if there is a map $f\colon X\to \HC^\kappa$ for some cardinal $\kappa$ such that
\begin{equation*}
\beta(x,y) = \cosh d\big(f(x), f(y)\big) \kern5mm\text{and} \kern5mm \alpha(x,y,z) = \cartan \big(f(x), f(y), f(z)\big)
\end{equation*}
hold for all $x,y, z \in X$.
\end{prop}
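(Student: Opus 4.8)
The plan is to pass through the projective linear model of $\HC^\kappa$, realised as the set of positive lines in $\CC\oplus H$ for a complex Hilbert space $H$, equipped with a Hermitian form of signature $(1,\kappa)$ (one positive direction), with $\kappa = \dim_\CC H$. I will use the formulas recalled in Section~\ref{sec:cartan}: in this model, for unit lifts $\langle v,v\rangle = \langle w,w\rangle = 1$ one has $\cosh d([v],[w]) = |\langle v,w\rangle|$, while the Cartan argument is the argument of the triple Hermitian product, $\cartan([v],[w],[z]) = \arg\big(\langle v,w\rangle\langle w,z\rangle\langle z,v\rangle\big)$. Both implications then reduce to bookkeeping with unit lifts; the only feature absent from the real Proposition/Definition~\ref{propdef:KHT} is the phase of the off-diagonal products.

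For necessity, given $f$ I fix $x_0$ and pick unit lifts $v_x$ of $f(x)$ in the gauge where $\langle v_x, v_{x_0}\rangle$ is real and positive for every $x$, which is possible since a lift is determined only up to a unit scalar. In this gauge $\langle v_x, v_{x_0}\rangle = \beta(x,x_0)$ and $\arg\langle v_x, v_y\rangle = \alpha(x_0,x,y)$, so a one-line computation yields
\[
\beta(x,x_0)\beta(x_0,y) - e^{i\alpha(x_0,x,y)}\beta(x,y) = \langle v_x, v_{x_0}\rangle\langle v_{x_0}, v_y\rangle - \langle v_x, v_y\rangle.
\]
Decomposing $v_x = \langle v_x, v_{x_0}\rangle v_{x_0} + w_x$ along $v_{x_0}$ and its orthogonal complement (on which the form is negative definite) rewrites the right-hand side as $-\langle w_x, w_y\rangle$, that is, as the inner product of $w_x, w_y$ in the genuine Hilbert space $\big(v_{x_0}^\perp, -\langle\cdot,\cdot\rangle\big)$; hence the kernel is of positive type for every $x_0$. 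That $\beta$ equals $1$ on the diagonal and that $\alpha$ is an alternating $2$-cocycle are inherited, by pullback through $f$, from the corresponding properties of $\cosh d$ and of $\cartan$ recorded in Section~\ref{sec:cartan}.

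For sufficiency I fix $x_0$ and apply the GNS construction for kernels of positive type to $K_{x_0}(x,y) = \beta(x,x_0)\beta(x_0,y) - e^{i\alpha(x_0,x,y)}\beta(x,y)$, obtaining a complex Hilbert space $H$ and vectors $w_x\in H$ with $\langle w_x, w_y\rangle_H = K_{x_0}(x,y)$, where $w_{x_0}=0$. Equipping $\CC\oplus H$ with $\langle(a,\xi),(b,\eta)\rangle = a\bar b - \langle\xi,\eta\rangle_H$ and setting $v_x = (\beta(x,x_0), w_x)$, the identity $K_{x_0}(x,x) = \beta(x,x_0)^2 - 1$ gives $\langle v_x, v_x\rangle = 1$, so each $[v_x]$ is a genuine point $f(x)\in\HC^\kappa$; unwinding the definition of $K_{x_0}$ then gives $\langle v_x, v_y\rangle = e^{i\alpha(x_0,x,y)}\beta(x,y)$, whence $|\langle v_x, v_y\rangle| = \beta(x,y) = \cosh d(f(x),f(y))$ as required.

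The remaining point, recovering the Cartan argument, is where the cocycle hypothesis does the essential work and is the main obstacle. Since $\beta\geq 0$, the formula for $\langle v_x, v_y\rangle$ gives
\[
\arg\big(\langle v_x, v_y\rangle\langle v_y, v_z\rangle\langle v_z, v_x\rangle\big) \equiv \alpha(x_0,x,y) + \alpha(x_0,y,z) + \alpha(x_0,z,x) \pmod{2\pi},
\]
and the alternating cocycle identity $d\alpha(x_0,x,y,z)=0$ rewrites the right-hand side as precisely $\alpha(x,y,z)$; thus the relative phases measured from the basepoint $x_0$ glue into the intrinsic invariant, which is exactly the cohomological content encoded in Definition~\ref{def:KCHT}. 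Both $\cartan(f(x),f(y),f(z))$ and $\alpha(x,y,z)$ lie in $(-\tfrac\pi2,\tfrac\pi2)$, so the congruence modulo $2\pi$ forces their equality. The one subtlety I expect to have to address carefully is that the principal value of the triple Hermitian product genuinely realises $\cartan$ within this range rather than a $2\pi$-translate of it; this will follow from the geometric range of the Cartan argument established in Section~\ref{sec:cartan} together with the assumed range of $\alpha$.
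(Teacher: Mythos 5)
Your proposal is correct and takes essentially the same approach as the paper: your ``if'' direction is the GNS/Minkowski construction of Proposition~\ref{prop:KCHT} (including the use of the alternating cocycle identity and of the range $(-\tfrac\pi2,\tfrac\pi2)$ to pin down the Cartan argument), and your ``only if'' direction matches the paper's concluding computation, since your gauge making $\langle v_x, v_{x_0}\rangle$ real and positive is exactly the paper's normalised lift after $x_0$ is moved to $1\oplus 0$ by transitivity of $\UU(B)$. The only differences are cosmetic: you keep $x_0$ arbitrary rather than normalising it, and you spell out the mod-$2\pi$ bookkeeping that the paper leaves implicit.
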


\subsection{Complex hyperbolic functions and representations}
Moving on from kernels to groups, we pose the following definition.

\begin{defi}
A \emph{function of complex hyperbolic type} on a group $G$ is a pair $(F, \alpha)$ with $F\colon G\to \RR_{\geq 0}$ such that $(\beta, \alpha)$ is a $G$-invariant kernel of complex hyperbolic type on $G$ when $\beta$ is defined by $\beta(g,h) = F(g\inv h)$.
\end{defi}

Here $G$-invariance is understood with respect to the diagonal actions on $G^2$ and $G^3$. This definition is therefore identical with its real counterpart when $\alpha=0$. In general, $G$-invariant cocycles $\alpha$ are the usual homogeneous cocycles for the second group cohomology of $G$.

The complex hyperbolic result corresponding to Theorem~\ref{thm:R:rep} is as follows.

\begin{thm}\label{thm:C:rep}
Given a function of complex hyperbolic type $(F, \alpha)$ on a group $G$, there is a $G$-action by holomorphic isometries on $\HC^\kappa$ for some $\kappa$ and there is $p\in \HC^\kappa$ such that
\begin{equation*}
F(g) = \cosh d(gp, p) \kern5mm\text{and}\kern5mm \alpha(g_0, g_1, g_2) = \cartan(g_0 p, g_1 p, g_2 p)
\end{equation*}
hold for all $g, g_j \in G$.

Moreover, the orbit $G p$ is total and the $G$-action is continuous if $G$ is endowed with a group topology for which $F$ and $\alpha$ are continuous.
\end{thm}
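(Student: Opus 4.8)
The plan is to imitate the proof of Theorem~\ref{thm:R:rep}, applying Proposition~\ref{prop:CHT:char} to the set $X=G$ and then upgrading the left\hyph{}translation action of $G$ on itself to an isometric action on the ambient space. Put $\beta(g,h)=F(g\inv h)$; by hypothesis $(\beta,\alpha)$ is a $G$-invariant kernel of complex hyperbolic type, and since $\beta$ is symmetric we have $F(g\inv)=F(g)$. Rather than invoke Proposition~\ref{prop:CHT:char} as a black box, I would keep its linear model with basepoint $x_0=e$: the kernel of Definition~\ref{def:KCHT} being of positive type, the GNS construction yields a complex Hilbert space $H_0$ and a map $\phi\colon G\to H_0$ with total image such that $\her{\phi(g)}{\phi(h)}=\beta(g,e)\beta(e,h)-e^{i\alpha(e,g,h)}\beta(g,h)$. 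On $H=\CC\oplus H_0$, equipped with the Hermitian form $\her{(s,u)}{(t,w)}=-\bar s\,t+\her uw$ of signature $(1,\kappa)$, the vectors $v(g)=(\beta(g,e),\phi(g))$ satisfy $\her{v(g)}{v(h)}=-e^{i\alpha(e,g,h)}\beta(g,h)$ and $\her{v(g)}{v(g)}=-1$; they span $H$ densely (note $\phi(e)=0$, so $v(e)=(1,0)$), and $f(g)=[v(g)]\in\HC^\kappa$ realises $(\beta,\alpha)$.

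The crux, which I expect to be the main obstacle, is to carry left translations through this model even though the basepoint $e$ is not $G$-fixed. I would define, for each $g\in G$, a complex\hyph{}linear map on the span of $\{v(h)\}$ by the phase\hyph{}corrected rule
\[ U_g\,v(h)=e^{-i\alpha(g\inv,e,h)}\,v(gh). \]
Using $G$-invariance of $\beta$ and of $\alpha$ one computes $\her{v(gh)}{v(gh')}=-e^{i\alpha(g\inv,h,h')}\beta(h,h')$, whence
\[ \her{U_g v(h)}{U_g v(h')}=-e^{i\left(\alpha(g\inv,e,h)-\alpha(g\inv,e,h')+\alpha(g\inv,h,h')\right)}\beta(h,h'). \]
The bracketed sum equals $\alpha(e,h,h')$ \emph{exactly} by the cocycle relation $d\alpha(g\inv,e,h,h')=0$, so that $\her{U_g v(h)}{U_g v(h')}=\her{v(h)}{v(h')}$. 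Non\hyph{}degeneracy of the form makes $U_g$ well defined on the span, and (as in the proof of Theorem~\ref{thm:R:rep}) it extends to a surjective linear isometry of $H$, surjectivity holding because $h\mapsto gh$ permutes the $v(h)$ up to phase. This is precisely where the insistence in Definition~\ref{def:KCHT} on a genuine alternating $2$-cocycle $\alpha$, rather than an ad hoc complex kernel, earns its keep: the cocycle is exactly the cohomological datum needed to correct the phases, and this is the new complex phenomenon absent from the real Theorem~\ref{thm:R:rep}.

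It then remains to projectivise and read off the basepoint. Each $U_g$ being complex\hyph{}linear, its projectivisation $\bar U_g$ is a holomorphic isometry of $\HC^\kappa$, and $\bar U_g f(h)=f(gh)$. Because $\her{v(h)}{v(h')}=-e^{i\alpha(e,h,h')}\beta(h,h')$ never vanishes (as $\beta\geq 1$), two linear isometries inducing the same map on the total set $f(G)$ must differ by a single global scalar; applying this to $U_{g_1}U_{g_2}$ and $U_{g_1g_2}$, which both carry $f(h)$ to $f(g_1g_2h)$, gives $\bar U_{g_1}\bar U_{g_2}=\bar U_{g_1g_2}$, so $g\mapsto\bar U_g$ is a homomorphism $G\to\Isom(\HC^\kappa)$. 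Setting $p=f(e)$ yields $gp=\bar U_g f(e)=f(g)$, hence $\cosh d(gp,p)=\beta(g,e)=F(g\inv)=F(g)$ and $\cartan(g_0p,g_1p,g_2p)=\alpha(g_0,g_1,g_2)$ by the realisation of $f$, while $Gp=f(G)$ is total by construction.

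For the continuity clause I would pass to matrix coefficients: the functions $g\mapsto\her{U_g v(h)}{v(h')}=-e^{i\left(\alpha(g\inv,e,h)+\alpha(e,gh,h')\right)}\beta(gh,h')$ are continuous as soon as $F$ and $\alpha$ are, and continuity of these coefficients on the total family $\{v(h)\}$ gives strong continuity of $g\mapsto U_g$, hence continuity of the resulting isometric action on $\HC^\kappa$, exactly as in the real case.
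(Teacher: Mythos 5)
Your proposal is correct and takes essentially the same route as the paper: there, Theorem~\ref{thm:C:rep} is obtained by specialising Theorem~\ref{thm:KCHT-char} to $X=G$, whose proof is exactly your phase-corrected translation action --- defined on $\CC[X]$ by $(g.\fhi)(x)=e^{i\alpha(z,gz,x)}\,\fhi(g\inv x)$, shown via the cocycle identity to preserve the form $B_z(\fhi,\psi)=\sum_{x,y}e^{i\alpha(z,x,y)}\beta(x,y)\,\ol{\fhi(x)}\psi(y)$, and yielding a projective representation with multiplier $e^{i\alpha}$ that becomes a genuine homomorphism after projectivisation. Your operators $U_g$ are precisely these maps transported to the GNS/Minkowski model of Proposition~\ref{prop:KCHT}, with your ``two form-isometries agreeing on $f(G)$ up to phases differ by one global scalar'' argument playing the role of the paper's explicit multiplier computation.
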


The entire group of isometries of $\HC^\kappa$ does not quite preserve the Cartan argument since anti-holomorphic isometries will reverse its sign. In general, we say that a cocycle $\alpha$ is \emph{twisted-invariant} under $G$ if every $g\in G$ either preserves $\alpha$ or maps it to $-\alpha$. The proof of Theorem~\ref{thm:C:rep} shows more generally that pairs $(F, \alpha)$ with  $\alpha$ twisted-invariant yield isometric $G$-actions on $\HC^\kappa$. We shall freely use both formulations, i.e.\ representations to $\Isom(\HC^\kappa)^\circ$ and to $\Isom(\HC^\kappa)$.

This is different from the real case, where kernels do not detect the orientation of $\HR^n$, and indeed there is not even a notion of orientation for $\HR^\kappa$ when $\kappa$ is infinite. In the complex case, the complex conjugation is encoded in the sign of $\alpha$ and does indeed survive for $\kappa$ infinite.

\subsection{Complex powers}
As announced above, Theorem~\ref{thm:R:power} is a particular case (namely $\alpha=0$) of the following result.

\begin{thm}\label{thm:C:power}
If $(\beta, \alpha)$ is a kernel of complex hyperbolic type, then so is $(\beta^t, t \alpha)$ for every $0\leq t\leq 1$.
\end{thm}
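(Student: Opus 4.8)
The structural requirements for $(\beta^t,t\alpha)$ are immediate: $\beta^t\equiv 1$ on the diagonal, and $t\alpha$ is again an alternating $2$-cocycle with values in $(-\tfrac\pi2,\tfrac\pi2)$ since $|t\alpha|\le|\alpha|$. The cases $t\in\{0,1\}$ are trivial, so fix $0<t<1$ and a base point $x_0\in X$. By Definition~\ref{def:KCHT} it remains to prove that the kernel
\[
(x,y)\longmapsto \beta(x,x_0)^t\,\beta(x_0,y)^t-e^{it\alpha(x_0,x,y)}\,\beta(x,y)^t
\]
is of positive type, for which I write $K\succeq0$.

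I would first repackage the hypothesis. Put $s_x=\beta(x,x_0)>0$ and form the Hermitian kernel $b(x,y)=e^{i\alpha(x_0,x,y)}\beta(x,y)$, so that $b(x,x)=1$; crucially, the Cartan bound $|\alpha|<\tfrac\pi2$ forces every value $b(x,y)$ into the open right half-plane $\{\Re z>0\}$. That $(\beta,\alpha)$ is of complex hyperbolic type means exactly that $(x,y)\mapsto s_xs_y-b(x,y)$ is of positive type. Dividing this kernel by the rank-one weight $s_xs_y$, and the target kernel by $s_x^ts_y^t$ (both operations preserve $\succeq0$), and setting $c(x,y)=b(x,y)/(s_xs_y)$, the hypothesis becomes $D:=J-c\succeq0$, where $J$ is the constant kernel $1$, while the goal becomes $J-c^{(t)}\succeq0$. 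Here $c$ still takes values in the right half-plane, and $c^{(t)}(x,y)=c(x,y)^t$ denotes the entrywise principal power; this matches the target because $\arg c=\alpha(x_0,\cdot,\cdot)\in(-\tfrac\pi2,\tfrac\pi2)$. Thus everything reduces to the implication: if $\Re c>0$ and $J-c\succeq0$, then $J-c^{(t)}\succeq0$.

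The analytic input is the Bernstein-type identity, valid for $\Re z>0$ and $0<t<1$,
\[
1-z^t=\frac{\sin \pi t}{\pi}\int_0^\infty\frac{\lambda^t}{1+\lambda}\cdot\frac{1-z}{\lambda+z}\,d\lambda,
\]
which I would check on $z>0$ by a Beta-integral and extend to the right half-plane by analytic continuation. Applying it entrywise at $z=c(x,y)$ gives $1-z=D(x,y)$ and $\lambda+z=\mu-D(x,y)$ with $\mu:=1+\lambda$, whence
\[
J-c^{(t)}=\frac{\sin\pi t}{\pi}\int_0^\infty\frac{\lambda^t}{1+\lambda}\,\Big(\,(x,y)\mapsto\frac{D(x,y)}{\mu-D(x,y)}\,\Big)\,d\lambda .
\]
As the scalar weight is non-negative, the result follows as soon as each building-block kernel $(x,y)\mapsto D(x,y)/(\mu-D(x,y))$ is of positive type.

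For the building block I would use that $\mu\ge1>\Re D$, so that the Laplace representation
\[
\frac{D(x,y)}{\mu-D(x,y)}=\int_0^\infty e^{-\mu\tau}\,D(x,y)\,e^{\tau D(x,y)}\,d\tau
\]
converges. For fixed $\tau\ge0$ the entrywise exponential $e^{\tau D}=\sum_{n\ge0}\tfrac{\tau^n}{n!}D^{(n)}$ is of positive type, being a convergent non-negative combination of Hadamard powers $D^{(n)}$ of the positive-type kernel $D$ (Schur's product theorem); multiplying by $D$ once more shows $D\,e^{\tau D}\succeq0$. Hence the building block is a non-negative superposition of positive-type kernels, so $\succeq0$, and the theorem follows; taking $\alpha=0$ recovers a direct proof of Theorem~\ref{thm:R:power}. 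The one genuinely delicate point is this building-block positivity, and the step at which the complex case is saved is the hypothesis $\Re c>0$: it is precisely the bound $|\alpha|<\tfrac\pi2$ that confines the values of $b$, hence $c$, to the right half-plane, making the principal power $z^t$ single-valued and the integral representation legitimate.
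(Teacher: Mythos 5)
Your proof is correct, and its first half is exactly the paper's reduction: dividing by the rank-one weight $s_xs_y$ is the paper's conjugation $M'=\one\one\ctran-DCD\ctran$ by the diagonal matrix with entries $1/b_j$, and both arguments thereby reduce to showing that $J-c\succeq 0$ implies $J-c^{(t)}\succeq 0$ for the entrywise principal power, with the Cartan bound $|\alpha|<\pi/2$ keeping all entries in the right half-plane so that principal powers factor as you need. (One small point you leave implicit: $s_x>0$ and $\Reel\, b>0$ also require $\beta>0$; this holds because the diagonal entries $\beta(x,y)^2-1$ of the defining positive kernel with base point $x$ force $\beta\geq 1$, a fact the paper records as $b_j\geq 1$.) Where you genuinely diverge is the analytic engine. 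The paper applies entrywise the binomial series $q(z)=1-(1-z)^t$, whose Taylor coefficients are nonnegative for $0<t<1$; this needs the entrywise bound $|1-c|<1$, which it extracts from positive semi-definiteness together with the diagonal entries being $<1$, and Schur's product theorem concludes. You instead use the Bernstein-type integral representation of $1-z^t$ on $\Reel z>0$ and reduce to positivity of the resolvent-type kernels $D/(\mu-D)$, which you obtain from the Hadamard exponential and a Laplace transform, again via Schur. Both routes are sound. The paper's is the more economical: a single power series, no integral representations, and no interchange of integrals with quadratic forms to justify (your two integrals do pass through the quadratic form for finitely many points, with integrability guaranteed by $\Reel(\mu-D)\geq\Reel c>0$ and the decay in $\lambda$, but this deserves a sentence). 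Yours buys robustness: you never need $|1-c|<1$, only $\Reel c>0$, i.e.\ the Cartan bound itself, and it situates the theorem inside the classical Loewner--Bernstein circle of ideas on operator monotone functions. Incidentally, your building block admits a proof closer in spirit to the paper's: since the diagonal of $D=J-c$ is $1-1/s_x^2<1$, positive semi-definiteness gives $|D|<1\leq\mu$ entrywise, so $D/(\mu-D)=\sum_{m\geq 1}\mu^{-m}D^{(m)}$ is of positive type by Schur's theorem alone, with no Laplace transform needed.
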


Using Theorem~\ref{thm:C:rep}, it now follows that $\Isom(\HCI)$ also admits a one\hyph{}parameter family of exotic self\hyph{}representations, which can moreover be shown to be irreducible.

\begin{thm}\label{thm:rep:exist}
For every $0< t\leq 1$ there is an irreducible continuous representation $\ro\colon \Isom(\HCI)\to \Isom(\HCI)$ together with a $\ro$-equivariant map $f\colon\HCI\to\HCI$ satisfying
\begin{align*} 
\cosh d\big(f(x), f(y)\big) &= \big(\cosh d(x,y)\big)^t\kern5mm\text{and}\\
\cartan\big(f(x),f(y),f(z)\big) &= t \,\cartan(x,y,z)
\end{align*}
for all $x,y,z\in\HCI$.
\end{thm}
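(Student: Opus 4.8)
The plan is to obtain $\rho$ by deforming the tautological action of $G = \Isom(\HCI)$ on $\HCI$ and feeding the result through Theorems~\ref{thm:C:power} and~\ref{thm:C:rep}. First I would fix a base point $p \in \HCI$ and note that the pair $(F, \alpha)$ given by $F(g) = \cosh d(gp, p)$ and $\alpha(g_0, g_1, g_2) = \cartan(g_0 p, g_1 p, g_2 p)$ is a function of complex hyperbolic type on $G$; indeed it is the pullback, along the orbit map $g \mapsto gp$, of the kernel $(\cosh d, \cartan)$ on $\HCI$, which is of complex hyperbolic type by Proposition~\ref{prop:CHT:char}. Here $\alpha$ is twisted-invariant rather than invariant, because anti-holomorphic isometries reverse the Cartan argument; as remarked after Theorem~\ref{thm:C:rep}, this costs nothing.

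Fix $0 < t \le 1$. By Theorem~\ref{thm:C:power} the deformed pair $(F^t, t\alpha)$ is again a function of complex hyperbolic type: rescaling by $t$ preserves the alternating-cocycle and twisted-invariance properties of $\alpha$, while the positivity condition is exactly what that theorem supplies. Applying the twisted-invariant form of Theorem~\ref{thm:C:rep} to $(F^t, t\alpha)$ yields a continuous homomorphism $\rho\colon G \to \Isom(\HC^\kappa)$ and a point $q \in \HC^\kappa$ with $\cosh d(\rho(g)q, q) = F^t(g)$ and $\cartan(\rho(g_0)q, \rho(g_1)q, \rho(g_2)q) = t\,\alpha(g_0, g_1, g_2)$, with the orbit $Gq$ total. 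Continuity of $\rho$ follows since $F$ and $\alpha$ are continuous for the standard topology on $\Isom(\HCI)$, hence so are $F^t$ and $t\alpha$. As $\HCI$ is separable its continuous image $Gq$ spans a separable subspace, so by totality I may take $\kappa = \aleph_0$ and regard $\rho$ as a self-representation of $\Isom(\HCI)$.

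To build the equivariant map I would use that $G$ acts transitively on $\HCI$ and set $f(gp) := \rho(g)q$. This is well defined: if $gp = g'p$ then $k = (g')^{-1}g$ lies in $\Stab_G(p)$, so $\cosh d(\rho(k)q, q) = F^t(k) = 1$ and thus $\rho(k)q = q$; hence $\Stab_G(p)$ fixes $q$ and $f(gp)$ does not depend on the choice of $g$. Equivariance $f(gx) = \rho(g)f(x)$ is then formal, and $f$ is injective since $f(x) = f(y)$ forces $(\cosh d(x,y))^t = 1$. Writing $x = gp$ and $y = hp$, the distance identity reads $\cosh d(f(x), f(y)) = \cosh d(\rho(g)q, \rho(h)q) = \cosh d(\rho(g^{-1}h)q, q) = F^t(g^{-1}h) = (\cosh d(x,y))^t$, using that $\rho$ is isometric; the Cartan identity is immediate from the defining relation, $\cartan(f(x), f(y), f(z)) = t\,\alpha(g, h, \ell) = t\,\cartan(x,y,z)$ with $z = \ell p$.

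The main obstacle I expect is irreducibility; everything above should be essentially formal once the two cited theorems are in hand. Ruling out a $\rho(G)$-fixed point of $\HCI$ is the easy half: since $t > 0$, the function $F^t$ is unbounded, so the orbit $Gq$ is unbounded and cannot lie on a metric sphere about a fixed point. The delicate tasks are to exclude a fixed point at infinity and a proper invariant totally geodesic subspace. For a boundary fixed point $\xi$, the associated Busemann cocycle defines a continuous homomorphism $G \to \RR$; since $\Isom(\HCI)$ admits no nontrivial such homomorphism, this character vanishes and the orbit is confined to a horosphere, and one must then derive a contradiction with the unboundedness and totality of $Gq$. For an invariant subspace, totality already excludes one passing through $q$, and I would try to propagate this to all invariant subspaces using the transitivity of $G$ and the richness of its point stabilizers. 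Transplanting the irreducibility of the tautological action across the non-geodesic deformation $f$ is where the genuine work lies, and where I would concentrate the effort.
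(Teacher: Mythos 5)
Your construction of $\ro$ and $f$ is exactly the paper's: deform the tautological kernel $(\cosh d,\cartan)$ by Theorem~\ref{thm:C:power} and feed $(\beta^t,t\alpha)$ into Theorem~\ref{thm:C:rep}; the equivariant map, its well\hyph{}definedness on the transitive orbit, and the two displayed identities all come out as you say (the paper obtains them directly from Theorem~\ref{thm:KCHT-char} rather than by hand). Your sketch for excluding fixed points, in $\HCI$ and at infinity, is also essentially recoverable: it is the paper's Lemma~\ref{lem:non-elem}, where perfectness kills the Busemann character and forces any boundary-fixing element to be parabolic, contradicting the positive translation length that formula~\eqref{eq:length:F} gives to $\ro(g)$ when $g$ is hyperbolic. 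So non\hyph{}elementarity is fine.

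The step you defer, however --- ``transplanting irreducibility across the non-geodesic deformation'' --- is the actual content of the theorem, and the ideas you offer for it would not close it. Non\hyph{}elementarity only yields a \emph{unique minimal} invariant subspace $\HC^\kappa\se\HCI$; since the orbit $Gq$ is total, irreducibility is equivalent to $q\in\HC^\kappa$. Your observation that totality excludes a proper invariant subspace \emph{containing} $q$ is correct but beside the point: the danger is an invariant subspace \emph{not} containing $q$, i.e.\ that $q$ sits at positive distance from the minimal part. This really happens for non\hyph{}minimal actions (it is the ``anisotropic deformation'' phenomenon of Lemma~\ref{lem:anis}), and transitivity of the source action cannot rule it out, because the obstruction lives entirely in the target. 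The paper's proof that $p\in\HC^\kappa$ is a multi-step argument with substantial external input: restrict $\ro$ to $\Isom(\HRI)$; use the cohomological vanishing Lemma~\ref{lem:pull-back:0} together with Lemma~\ref{lem:tot:real} to show the corresponding orbit lies in a totally real subspace $\HR^{\kappa'}\se\HC^\kappa$; pass to its $\RR$-irreducible part $\HR^{\kappa''}$ and invoke the classification of irreducible self\hyph{}representations of $\Isom(\HRI)$ from~\cite{Monod-Py2_arx} (Theorem~II there, plus the uniqueness of the equivariant map), which identifies the restricted kernel as the \emph{same} $t$-power kernel~\eqref{eq:f''}; finally, compare $f$ with $\pi\circ f$, where $\pi$ is the nearest-point projection to $\HR^{\kappa''}$, and conclude via the sandwich lemma that equality of the two kernels forces $\pi=\Id$ on the orbit, hence $p=p''\in\HR^{\kappa''}\se\HC^\kappa$. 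None of this (the real classification, the vanishing of the restriction $\hc^2(\Isom(\HC^n)^\circ,\RR)\to\hc^2(\Isom(\HR^n)^\circ,\RR)$, the projection argument) appears in your plan, so the irreducibility claim --- and with it the theorem --- remains unproven. A smaller point in the same vein: your reduction to $\kappa=\aleph_0$ needs not only separability of $G$ but also that $\kappa$ is infinite, which in the paper ultimately follows from this same analysis.
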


\begin{rem}
In the proof of irreducibility, we shall see that the representations of Theorem~\ref{thm:rep:exist}, when restricted to $\Isom(\HRI)$, preserve in the target a copy of $\HRI\se \HCI$ and that this restriction coincides with the self\hyph{}representations of $\Isom(\HRI)$ constructed in~\cite{Monod-Py2_arx}.
\end{rem}

Theorem~\ref{thm:C:power} can also be applied to the finite\hyph{}dimensional case $\HC^n$. Just as for $\HR^n$, it turns out that the \emph{irreducible} part of the representations arising from the $t$-power kernel have a slightly different kernel, whose explicit expression is less simple. A more readable, but equivalent, characterisation is provided in that case (as in~\cite{Monod-Py}) by the \emph{translation length} $\ell$ of isometries (recalled in Section~\ref{sec:isom}) and we obtain the following exotic representations of the \emph{ordinary Lie groups} $\Isom(\HC^n)$.

\begin{thm}\label{thm:rep:exist:n}
For every $0< t< 1$ and every integer $n\geq 1$ there is an irreducible continuous representation $\ro\colon \Isom(\HC^n)\to \Isom(\HCI)$ together with an equivariant map $f\colon \HC^n\to\HCI$ satisfying
\begin{equation*} 
\ell\big(\ro(g)\big) = t\, \ell(g) \kern5mm\text{and} \kern5mm \cartan\big(f(x),f(y),f(z)\big) = t \,\cartan(x,y,z)
\end{equation*}
for all $g\in \Isom(\HC^n)$ and all $x,y,z\in\HC^n$.
\end{thm}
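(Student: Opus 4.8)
The plan is to deduce Theorem~\ref{thm:rep:exist:n} from the infinite-dimensional statement Theorem~\ref{thm:rep:exist} by restricting to a copy of $\HC^n$ inside $\HCI$, and then extracting the \emph{irreducible} sub-representation whose kernel is governed by the translation length $\ell$ rather than by $\cosh d$ directly. First I would fix $0<t<1$ and $n\geq 1$, and apply Theorem~\ref{thm:rep:exist} to obtain a continuous representation $\ro_0\colon\Isom(\HCI)\to\Isom(\HCI)$ with an equivariant $f_0\colon\HCI\to\HCI$ satisfying the two displayed power identities. Embedding $\HC^n$ isometrically and holomorphically as a totally geodesic copy inside $\HCI$ gives an inclusion $\Isom(\HC^n)\hookrightarrow\Isom(\HCI)$, and restricting $\ro_0$ and $f_0$ along this inclusion already yields a continuous representation and an equivariant map satisfying the Cartan identity $\cartan(f(x),f(y),f(z))=t\,\cartan(x,y,z)$ on $\HC^n$. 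The point of the theorem, however, is to replace $\cosh d$ by the translation length $\ell$, so the substance is in relating these two invariants.

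The key step is the passage from the $\cosh d$-kernel to the $\ell$-characterisation, which (as the paragraph preceding the statement indicates, following~\cite{Monod-Py}) is done by isolating the irreducible part of the representation. I would argue that the $t$-power representation $\ro_0$ restricted to $\Isom(\HC^n)$ decomposes, and that one irreducible piece carries a new equivariant map $f$ for which the displayed translation-length relation holds. Concretely, for a hyperbolic (loxodromic) isometry $g\in\Isom(\HC^n)$ with translation length $\ell(g)$, I would compute the translation length of $\ro_0(g)$ acting on the relevant invariant subspace, using that the power kernel deforms $\cosh d(x,y)\mapsto(\cosh d(x,y))^t$ and hence, asymptotically along an axis, deforms $e^{d}$ to $e^{td}$; since $\ell$ is recovered from the exponential growth rate of $\cosh d(g^k p,p)$ as $k\to\infty$, this should give $\ell(\ro(g))=t\,\ell(g)$ after selecting the correct irreducible factor. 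The Cartan identity is inherited verbatim from Theorem~\ref{thm:rep:exist}, since $\cartan$ is a $2$-cocycle whose scaling by $t$ is built into $(\beta^t,t\alpha)$ and is unaffected by passing to an irreducible subspace.

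For irreducibility and continuity, I would invoke the same mechanism used for Theorem~\ref{thm:rep:exist}: continuity of $\ro$ follows from continuity of the pair $(F,\alpha)$ together with the continuity clause of Theorem~\ref{thm:C:rep}, and irreducibility would be established by the argument sketched in the remark after Theorem~\ref{thm:rep:exist}, namely by showing the restriction to $\Isom(\HR^n)\subset\Isom(\HC^n)$ preserves and acts irreducibly on a copy of a real hyperbolic space, reducing to the known real-case irreducibility from~\cite{Monod-Py2_arx},\cite{Monod-Py}. The main obstacle I anticipate is precisely the identification of the irreducible factor carrying the $\ell$-relation: one must verify that the invariant totally geodesic subspace on which $\ro$ acts irreducibly really does have translation lengths scaled by exactly $t$ (and not by some other function of $t$ and $\ell$), which requires controlling how the complex power construction interacts with the axis of a loxodromic element and with the finite-dimensional reduction. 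Excluding the endpoint $t=1$ (where the representation degenerates to the identity) and handling the boundary behaviour of the Cartan argument along axes are the delicate points; everything else is a routine transfer from the infinite-dimensional theorem.
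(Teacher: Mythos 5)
Your skeleton is essentially the paper's: produce the power representation $\ro_0$ with its equivariant map $f_0$ (whether you build it directly from the tautological kernel on $\HC^n$ via Theorems~\ref{thm:C:power} and~\ref{thm:C:rep}, or by restricting Theorem~\ref{thm:rep:exist} along an embedding $\Isom(\HC^n)\hookrightarrow\Isom(\HCI)$, makes no difference by the uniqueness in Theorem~\ref{thm:KCHT-char}), observe via Lemma~\ref{lem:non-elem} that it is non\hyph{}elementary but not irreducible, co-restrict to the unique irreducible part $\HC^\kappa\se\HCI$, replace $f_0$ by $f=\pi\circ f_0$ where $\pi$ is the equivariant nearest-point projection, and read off $\ell(\ro(g))=t\,\ell(g)$ from the growth-rate formula~\eqref{eq:length:F}. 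The genuine gap is your claim that the Cartan identity is ``inherited verbatim'' because it is ``unaffected by passing to an irreducible subspace.'' That is exactly the step requiring proof: $f$ and $f_0$ are different maps ($\pi$ moves $p_0$, since otherwise $\ro_0$ would already be irreducible), and a nearest-point projection does not preserve Cartan arguments of triples. The paper closes this gap by a rigidity argument: both $\cartan\big(f(x),f(y),f(z)\big)$ and $\cartan\big(f_0(x),f_0(y),f_0(z)\big)$ are $\Isom(\HC^n)$-invariant alternating $2$-cocycles on $\HC^n$; they are cohomologous because $f$ and $f_0$ are orbital maps of two points of the same representation; and by Remark~\ref{rem:cohomologous} (double transitivity forces each cohomology class to have a \emph{unique} alternating invariant representative) cohomologous implies equal. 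Alternatively, one can use that $d(f,f_0)$ is constant and that $\cartan$ is determined by its values at infinity. Without some such argument, the identity $\cartan\big(f(x),f(y),f(z)\big)=t\,\cartan(x,y,z)$ for the projected map $f$ is unproven.

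Your irreducibility plan is also misdirected. Once you co-restrict a non\hyph{}elementary representation to its unique minimal invariant subspace, irreducibility holds by construction (non\hyph{}elementary and minimal is equivalent to irreducible, Section~\ref{sec:non-el}); nothing further is needed. The argument you propose to import from the proof of Theorem~\ref{thm:rep:exist} --- restricting to $\Isom(\HR^n)$, invoking the real classification, and concluding with the sandwich lemma --- is designed there to prove the \emph{stronger} statement that the base point $p_0$ already lies in the irreducible part, i.e.\ that $\ro_0$ itself is irreducible. For finite $n$ that statement is false: the irreducible part of the power representation carries a different kernel, which is precisely why Theorem~\ref{thm:rep:exist:n} is phrased in terms of $\ell$ rather than of $\cosh d$, and why the map $f$ you end up with does not satisfy the $(\cosh d)^t$ identity. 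Your route would moreover collapse at $n=1$, where there is no usable classification for $\Isom(\HR^1)$; the paper notes explicitly that its proof covers $n=1$ precisely because it nowhere restricts to $\Isom(\HR^n)$.
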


Again, these representations contain the representation of $\Isom(\HR^n)$ studied in~\cite{Monod-Py} for $n\geq 2$ and actually provide a new proof of their existence, not using the theory of principal series, see Remark~\ref{rem:contain:n}.

\begin{rem}\label{rem:2:series:state}
The special case of $n=1$ in Theorem~\ref{thm:rep:exist:n} restricts to a family of irreducible continuous representations of $\PSL_2(\RR) \cong \Isom(\HC^1)^\circ$ into $\Isom(\HCI)$ which are \emph{different} from the irreducible continuous representations of $\PSL_2(\RR) \cong \Isom(\HR^2)^\circ$ into $\Isom(\HCI)$ obtained by complexifying the representations studied in~\cite{Delzant-Py},\cite{Monod-Py}, see Remark~\ref{rem:2:series} below for details.
\end{rem}

\subsection{Classification and argument/length rigidity}
Now that we know that $\Isom(\HCI)$ does admit non\hyph{}obvious self\hyph{}representations, the classification question arises. A useful tool here is that any irreducible representation of $\Isom(\HCI)$ or of $\Isom(\HRI)$ to $\Isom(\HCI)$ contains a \emph{unique} orbit isomorphic, as homogeneous space, to $\HCI$ or $\HRI$ respectively (see Section~\ref{sec:fixator}). An equivalent reformulation in more familiar representation\hyph{}theoretical terms is that there is a unique ``spherical'' line fixed by the unitary, repectively orthogonal, subgroup. It follows that more generally non-elementary representations admit a \emph{canonical orbit} isomorphic to $\HCI$ or $\HRI$.

It turns out that the representations constructed in Theorem~\ref{thm:rep:exist} have the property that their (unique) canonical orbit contains the canonical orbit of the subgroup $\Isom(\HRI)$, see Remark~\ref{rem:canonical}. Under this restriction, we obtain a complete classification as follows. 

\begin{thm}\label{thm:rep:classify}
Consider any irreducible self\hyph{}representation $\Isom(\HCI)\to \Isom(\HCI)$ whose canonical orbit contains the canonical orbit of $\Isom(\HRI)$.

Then this self\hyph{}representation is conjugated to one of the representations of Theorem~\ref{thm:rep:exist}.
\end{thm}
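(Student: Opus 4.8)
The plan is to reconstruct from the representation a function of complex hyperbolic type on $G=\Isom(\HCI)$, to pin down its metric part by restriction to the real subgroup, and to determine its cocycle part by an infinitesimal argument at the base point; the uniqueness clause of Theorem~\ref{thm:C:rep} then finishes the identification. Concretely, let $\ro$ be the given irreducible self-representation and let $K\le G$ be the isotropy group of a base point $p_0\in\HCI$, so that the canonical orbit is a $G$-equivariant copy of $\HCI=G/K$ in the target, realised by an equivariant map $f\colon\HCI\to\HCI$ with $f(p_0)=p$. Setting $F(g)=\cosh d(p,\ro(g)p)$ and $\alpha(g_0,g_1,g_2)=\cartan(\ro(g_0)p,\ro(g_1)p,\ro(g_2)p)$ yields a function of complex hyperbolic type $(F,\alpha)$ on $G$ (the easy direction of the correspondence underlying Theorem~\ref{thm:C:rep}). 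Since $\HCI$ is two-point homogeneous, the quantity $\cosh d(f(x),f(y))$ depends only on $d(x,y)$, so $F(g)=\Psi\big(d(p_0,gp_0)\big)$ for a single function $\Psi$.

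Next I would determine $\Psi$. Let $L=\Isom(\HRI)\le G$, with isotropy $K_L=L\cap K$, so that the $L$-orbit $L\cdot p_0$ is the canonical real form $\HRI\subset\HCI$ in the source. The hypothesis that the canonical orbit of $\ro$ contains that of $L$ forces $f(\HRI)$ to be the canonical $\HRI$-orbit of $\ro|_L$, a totally geodesic real form of the target on which the Cartan argument vanishes identically. Hence $(F|_L,0)$ is a function of \emph{real} hyperbolic type (Remark~\ref{rem:beta:RC}), that is, a self-representation of $\Isom(\HRI)$, and the classification of these from \cite{Monod-Py2_arx} gives $\Psi(r)=(\cosh r)^t$ for some $t\in(0,1]$. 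Since distances within $\HRI$ already exhaust $[0,\infty)$, this determines $\Psi$ on all of $G$: one has $F=(\cosh d)^t$, matching the metric datum of Theorem~\ref{thm:rep:exist}.

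It remains to show $\alpha=t\,\alpha_0$, where $\alpha_0$ is the Cartan cocycle of the source. Here I would differentiate $f$ at $p_0$: equivariance makes $\mathrm{d}f_{p_0}$ intertwine the isotropy action of $K$ on the tangent space $T_{p_0}\HCI$ with the action of $\ro(K)$ on $T_p\HCI$. As $K$ acts as the full unitary group, it is irreducible over $\CC$ on this complex Hilbert space, so Schur's lemma forces $\mathrm{d}f_{p_0}$ to be a scalar multiple of a $\CC$-linear, or conjugate-linear, isometry; comparison with $F=(\cosh d)^t$ fixes the modulus of the scalar to be $\sqrt t$. In the $\CC$-linear case $f$ therefore pulls the Kähler form back to $t$ times the Kähler form, and since the Cartan argument is, up to a universal coboundary, the symplectic area cocycle, this yields $\alpha=t\,\alpha_0$; the conjugate-linear case yields $-t\,\alpha_0$ and is absorbed by composing $\ro$ with an anti-holomorphic isometry of the target. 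With $(F,\alpha)=\big((\cosh d)^t,\,t\,\alpha_0\big)$ identified, the uniqueness in Theorem~\ref{thm:C:rep}, valid because irreducibility makes the orbit total, shows that $\ro$ is conjugate to the representation of Theorem~\ref{thm:rep:exist} with parameter $t$.

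The hard part will be this last step: justifying that $f$ is differentiable at $p_0$ and that the purely infinitesimal conformality coming from Schur's lemma propagates to the global identity $\alpha=t\,\alpha_0$. I expect the smoothness to follow from $f$ being a homogeneous-space isomorphism onto its orbit, and the propagation from $G$-equivariance together with the continuity of $\alpha$; but making the passage from the tangent-level scaling of the Kähler form to the global Cartan cocycle precise, in particular controlling the coboundary relating the Cartan argument to the symplectic area uniformly on $\HCI$, is the delicate point.
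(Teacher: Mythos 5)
Your handling of the metric part is essentially the paper's own argument: restrict to $\Isom(\HRI)$, apply the real classification of \cite{Monod-Py2_arx} to extract the exponent $t$, and use two\hyph{}point homogeneity of the source plus equivariance to propagate $F=(\cosh d)^t$ to all of $\HCI$. Two points you assert without proof are handled in the paper by specific lemmas: the vanishing of the pulled\hyph{}back Cartan argument on the real orbit (so that $(F|_L,0)$ is genuinely of \emph{real} hyperbolic type) is Lemma~\ref{lem:pull-back:1} combined with Lemma~\ref{lem:tot:real}, and the identification of the real equivariant map with the restriction of the complex one at the base point --- which is exactly where the canonical\hyph{}orbit hypothesis enters --- rests on the uniqueness of the fixed point of the stabiliser, Proposition~\ref{prop:unique:FP}, whose proof is nontrivial (Gelfand pairs). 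Note also that the theorem assumes no continuity, so one must first establish automatic continuity of $\ro$; your proposal uses continuity throughout but never addresses this.

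The genuine gap is the cocycle step, and it is precisely the point you yourself flag as delicate. Your plan requires $f$ to be differentiable at $p_0$, but at that stage of the argument nothing is known about $\alpha$ beyond continuity, so the kernel $(\beta,\alpha)$ that determines $f$ through the GNS construction is not known to be smooth and there is nothing to differentiate. More importantly, without exact first\hyph{}order control the infinitesimal comparison can only ever give an inequality: since the image triple $f(x),f(y),f(z)$ need not lie in a complex geodesic and spans no canonical $2$\hyph{}simplex, the area asymptotics bound $\left|\cartan\big(f(x),f(y),f(z)\big)\right|$ only from above, yielding $|s|\leq t$. This is exactly the first half of the paper's proof of Theorem~\ref{thm:t:s}; the reverse inequality $|s|\geq t$ is the real content, and the paper obtains it by an entirely different mechanism. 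Namely, one first gets $\alpha=s\cartan$ for a scalar $s$ from the one\hyph{}dimensionality of $\hc^2$ together with the uniqueness of invariant alternating representatives (Remark~\ref{rem:cohomologous}), and then exploits the positivity requirement of Definition~\ref{def:KCHT} on explicit four\hyph{}point configurations in a complex geodesic: the matrix $M(z)$ built from cube roots of unity, the mixed powers $z^{t,s}$, and the second\hyph{}derivative computation $R''(0)=3t^2-3s^2\leq 0$. Your Schur\hyph{}lemma/K\"ahler\hyph{}form argument could conceivably replace this, but only after first proving $\alpha=s\cartan$ (to gain regularity of the kernel) and then establishing Fr\'echet differentiability of the GNS map --- neither of which is sketched --- so as it stands the proposal is missing the key ingredient of the theorem rather than merely a technical verification.
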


\begin{rem}
Notice that no continuity assumption is made; indeed there is an automatic continuity phenomenon exactly as in Theorem~III in~\cite{Monod-Py2_arx}. Moreover, it will be apparent from the proof that Theorem~\ref{thm:rep:classify} remains valid, mutatis mutandis, for the index two subgroups of holomorphic isometries.
\end{rem}

Prehaps all irreducible self\hyph{}representations of $\Isom(\HCI)$ satisfy the above restriction; then Theorem~\ref{thm:rep:classify} would give an unconditional classification. Our additional assumption allows us to combine results from~\cite{Monod-Py} with the argument/length rigidity Theorem~\ref{thm:t:s} below. In general, an unconditional classification could be attempted e.g.\ by revisting the arguments themselves used to prove~\cite{Monod-Py}.

\medskip

We observe a new phenomenon when comparing Theorem~\ref{thm:rep:classify} with the analogous classification obtained in~\cite{Monod-Py2_arx} for the real hyperbolic case. Indeed, a pair $(\beta, \alpha)$ associated to an arbitrary irreducible self\hyph{}representation of $\Isom(\HCI)$ could suggest that a classification would require \emph{two} numerical invariants $t,s$:

A first one would be obtained if we prove as in the real case that the distances are affected by an exponent $t\in (0, 1]$ (in the hyperbolic cosine). Then, a second scalar $s\in \RR$ could appear as follows. The pull-back of the Cartan argument in the target gives a $2$-cocycle for the source group. The latter has one\hyph{}dimensional cohomology and we can hope that the resulting proportionality factor $s$ between cohomology classes finishes to determine the representation.

This is indeed the first outline of our argument. However, it turns out that only $s=\pm t$ is possible in the setting of Theorem~\ref{thm:rep:classify}. This is a manner of (anti-)holomorphy that we establish using the following result.

\begin{thm}\label{thm:t:s}
Let $(\beta, \alpha)$ be the tautological kernel of complex hyperbolic type on the complex geodesic $\HC^1$ and let $0\leq t \leq 1$, $s\in \RR$.

Then $(\beta^t, s \alpha)$ is of complex hyperbolic type only if $s=\pm t$.
\end{thm}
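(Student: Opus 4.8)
The plan is to route everything through Proposition~\ref{prop:CHT:char}: the pair $(\beta^t,s\alpha)$ is of complex hyperbolic type exactly when there is a map $f\colon\HC^1\to\HC^\kappa$ with $\cosh d\big(f(x),f(y)\big)=\big(\cosh d(x,y)\big)^t$ and $\cartan\big(f(x),f(y),f(z)\big)=s\,\alpha(x,y,z)$. The two admissible values are already explained by Theorem~\ref{thm:C:power}: $s=t$ comes from a holomorphic such $f$, and $s=-t$ from post-composing it with an anti-holomorphic involution of the target, which negates $\cartan$. The whole content of the statement is therefore to exclude every other $s$.

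First I would extract the bound $|s|\le t$ from the two-point instance of the positivity in Definition~\ref{def:KCHT}. Fixing $x_0$ and testing the kernel on $\{x_1,x_2\}$, nonnegativity of the $2\times2$ minor is, after clearing terms, equivalent to
\[ \cos\big(s\,\alpha(x_0,x_1,x_2)\big)\ \ge\ \frac{\cosh^{2t}d_{01}+\cosh^{2t}d_{02}+\cosh^{2t}d_{12}-1}{2\,\cosh^{t}d_{01}\,\cosh^{t}d_{02}\,\cosh^{t}d_{12}},\qquad d_{ij}=d(x_i,x_j), \]
which is exactly the condition that the Hermitian Gram determinant of the three image points be $\le0$, with equality precisely when $f(x_0),f(x_1),f(x_2)$ lie in a common complex geodesic. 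On the source side the same expression with $t=1$ computes $\cos\alpha(x_0,x_1,x_2)$ \emph{exactly}, since the source triple already lies in $\HC^1$. Expanding both sides for an infinitesimal triangle of side $\varepsilon$ and invoking Heron's formula gives $\alpha\sim 2\,\mathrm{Area}$, while the comparison angle attached to the $t$-scaled side lengths is $\sim 2t\,\mathrm{Area}$ (the image triangle has area scaled by $t$); as cosine is decreasing this forces $|s|\,\alpha\le t\,\alpha$ to leading order, hence $|s|\le t$.

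The reverse bound is the crux, and it genuinely requires more than triples: the displayed inequality, imposed over all triangles, is \emph{equivalent} to $|s|\le t$ and holds strictly whenever $|s|<t$, so no triple can exclude a small $s$ (the extreme case $s=0$ being precisely the assertion that $(\cosh d)^t$ is not a kernel of real hyperbolic type on $\HC^1$, by Remark~\ref{rem:beta:RC}). I would instead analyse $f$ itself. The distance relation forces $df_x$ to be $\sqrt t$ times a linear isometry onto a real $2$-plane $P_x\subseteq T_{f(x)}\HC^\kappa$, and matching the Cartan argument to first order — again $\cartan\sim 2\,\mathrm{Area}$, now computed with the ambient Kähler form — identifies $\cartan$ with the pullback of that form and yields $s=t\cos\psi(x)$, where $\psi(x)$ is the Kähler angle of $P_x$. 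Thus $f(\HC^1)$ is an immersed surface of \emph{constant} Kähler angle $\psi$ with $\cos\psi=s/t$, carrying the induced metric $t\,g_{\HC^1}$ of Gaussian curvature $-4/t$.

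It then remains to prove $\psi\in\{0,\pi\}$, i.e.\ that the only constant-Kähler-angle realizations of this scaled metric are the (anti)holomorphic ones; this is where I expect the real work, since each individual finite configuration that arises from a genuine $f$ is automatically realizable, so the obstruction must be global. I see two routes. The geometric one feeds the Gauss equation with the ambient sectional curvature $-(1+3\cos^2\psi)$ of a plane of Kähler angle $\psi$ and the \emph{exact} (not merely infinitesimal) chordal identity $\cosh d=(\cosh d)^t$, aiming to show that an intermediate $\psi$ cannot be sustained along the whole surface. The combinatorial one tests the full positivity on a single rotationally symmetric configuration — a centre $x_0$ together with three points cyclically permuted by an order-$3$ rotation of $\HC^1$ — for which the cocycle relation forces the outer Cartan argument to equal three times a spoke's; the resulting $4\times4$ Gram matrix is circulant, its signature is an explicit function of $(s,t)$ and the radius, and I expect admissibility for all radii simultaneously to collapse to $s^2=t^2$. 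Either way, the decisive and hardest step is upgrading the infinitesimal identity $s=t\cos\psi$ to the rigidity $\cos\psi=\pm1$, which the two-point positivity alone cannot detect.
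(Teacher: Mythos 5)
Your first half is correct and coincides with the paper's own argument: the infinitesimal equilateral triangle, with $\cartan\sim 2\cdot\mathrm{Area}$ on complex geodesics (Remark~\ref{rem:omega:area}) and $(L'/L)^2\to t$, is precisely how the paper obtains $|s|\le t$. Your diagnosis that configurations of three points can never yield the reverse inequality --- since $|s|\le t$ together with the monotonicity of cosine makes every triple condition hold --- is also correct, and it explains why a four-point configuration is needed.

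The gap is that the reverse inequality $|s|\ge t$, which is the entire content of the theorem, is never actually proved: both of your routes end in a program rather than an argument. Route (a) faces two concrete obstructions you do not resolve: the map $f$ furnished by the GNS construction is a priori only continuous, so $df_x$, the plane $P_x$ and the K\"ahler angle $\psi(x)$ are not available without a regularity argument; and even granting smoothness, the rigidity assertion ``constant K\"ahler angle with induced metric $t\,g$ forces $\cos\psi=\pm 1$'' is exactly the statement to be established, not a citable input. Route (b) is, in fact, the paper's proof: it takes $x_0=1\oplus 0$ and $x_j=(1+\epsi)^{1/2}\oplus\epsi^{1/2}\zeta^j$ with $\zeta^3=1$, $j=1,2,3$, so that the $3\times 3$ constraint matrix from Definition~\ref{def:KCHT} becomes a scalar multiple of the circulant matrix $M(z)$ with rows built from $1,z,\ol z$. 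But where you write ``I expect admissibility for all radii simultaneously to collapse to $s^2=t^2$'', the paper does the work: $M(z)$ positive semi-definite forces $\Reel z\ge -1/2$ (via an explicit determinant factorization and Sylvester's criterion), which unwinds to the inequality $R(\epsi):=2\Reel\big(1+\epsi-\epsi\zeta\big)^{t,s}-3(1+\epsi)^t\le -1$ for all $\epsi>0$, where $z^{t,s}=|z|^t e^{i s\arg z}$ is the mixed power; since $R(0)=-1$ and $R'(0)=0$, one must have $R''(0)\le 0$, and a direct computation gives $R''(0)=3t^2-3s^2$, i.e.\ $|s|\ge t$. Until this (or an equivalent) second-order computation on a four-point configuration is carried out, your proposal establishes only the inequality $|s|\le t$, not the theorem.
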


\subsection{A comment on real irreducibility}
Recall that classical \emph{irreducible} unitary representations correspond to functions of positive type that are \emph{pure}, i.e.\ extremal in the convex cone of all functions of positive type, see e.g.~\cite[2.5.2]{Dixmier6996_C}.

In the hyperbolic case, we face again the fact that there is no convex structure. There is nonetheless a clean condition ensuring that a function of hyperbolic type yields an irreducible representation:

\begin{prop}\label{prop:min:irred}
Let $F\colon G\to \RR_{\geq 0}$ be a function of real hyperbolic type on a group $G$. Suppose that the ratio $F/F'$ is unbounded for any other function of real hyperbolic type $F'\leq F$.

Then the representation corresponding to $F$ is irreducible.
\end{prop}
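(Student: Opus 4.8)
The plan is to argue by contrapositive: assume the representation $\pi$ associated to $F$ is \emph{not} irreducible and produce a function $F'$ of real hyperbolic type with $F'\leq F$ for which $F/F'$ is bounded. By Theorem~\ref{thm:R:rep}, the function $F$ arises from an isometric $G$-action on some $\HR^\kappa$ with a point $p$ having total orbit, via $F(g)=\cosh d(gp,p)$. Here \emph{irreducible} should mean that there is no proper nonempty $G$-invariant hyperbolic subspace; equivalently, there is no totally geodesic proper subspace $Y\subseteq\HR^\kappa$ invariant under $G$ (an elementary representation fixing a point or a boundary configuration being the degenerate case to keep track of separately).

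\medskip
First I would set up the geometry of a $G$-invariant proper subspace. If the action on $\HR^\kappa$ is reducible, there is a proper totally geodesic $G$-invariant subspace $Y\subsetneq\HR^\kappa$; let $q$ denote the nearest-point projection of $p$ onto $Y$ (which exists and is unique by convexity in \cat{-1} spaces, and is $G$-equivariant because $Y$ and the metric are $G$-invariant, so $g q$ is the projection of $g p$ onto $Y$). The orbit of $q$ is then total in $Y$, and the point $q$ gives a new function of real hyperbolic type $F'(g)=\cosh d(gq,q)$, which is of hyperbolic type because it comes from a genuine action on a hyperbolic space (the converse direction of Proposition/Definition~\ref{propdef:KHT}).

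\medskip
The core of the argument is the comparison $F'\leq F$ together with boundedness of $F/F'$, which I would extract from hyperbolic trigonometry relating $d(gp,p)$, $d(gq,q)$, and the fixed distance $r=d(p,q)$ from $p$ to its projection. Writing $P$ for the projection map and using that $P$ is $1$-Lipschitz and equivariant, one expects an inequality of the shape $d(gq,q)\leq d(gp,p)$, giving $F'\leq F$ at once; and on the other side a \cat{-1} comparison controlling how far $gp$ can be from the geodesic subspace $Y$ in terms of $r$, which bounds the ratio $\cosh d(gp,p)/\cosh d(gq,q)$ above by a constant depending only on $r$ (morally $e^{2r}$ or so). Concretely I would use the hyperbolic Pythagoras-type estimate: since $q,gq\in Y$ and $p,gp$ project to them at distance $r$, convexity of the distance to $Y$ yields $\cosh d(gp,p)\leq \cosh(2r)\,\cosh d(gq,q)$ after the appropriate manipulation, whence $F/F'$ is bounded by $\cosh(2r)$.

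\medskip
The main obstacle I anticipate is the degenerate case where the ``invariant subspace'' is not a positive-dimensional hyperbolic subspace but rather a $G$-fixed point or a fixed point at infinity, i.e.\ where reducibility manifests as the orbit $Gp$ staying bounded or converging to the boundary. If $Gp$ is bounded, it has a canonical circumcenter $c$ fixed by $G$, and taking $F'$ constant equal to $1$ (coming from the one-point orbit of $c$) makes $F/F'=F$ unbounded precisely when $Gp$ is unbounded, so one must check that a reducible bounded orbit forces the contradiction through this route; the fixed-boundary-point case requires a separate horospherical argument, where the natural competitor $F'$ is built from a horofunction and the ratio bound again follows from \cat{-1} estimates. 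Making the notion of ``irreducible'' precise enough to cover these elementary cases uniformly, and verifying that each produces an admissible $F'\leq F$ with bounded ratio, is the delicate bookkeeping that the proof must handle carefully.
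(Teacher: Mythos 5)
Your main case---projecting $p$ equivariantly to an invariant hyperbolic subspace $Y$, using non\hyph{}expansiveness of the projection for $F'\leq F$ and the triangle inequality $d(gp,p)\leq d(gq,q)+2r$ for the bounded ratio---is exactly the paper's argument (you run it as a contrapositive, the paper runs it directly; that difference is cosmetic). Your treatment of bounded orbits via the constant competitor $F'=1$ is also sound. Two corrections there, one small and one more serious. Small: the inequality $\cosh d(gp,p)\leq\cosh(2r)\cosh d(gq,q)$ is false as stated (take $d(gq,q)=2r$ large); you need $e^{2r}\cosh d(gq,q)$ or $2\cosh(2r)\cosh d(gq,q)$, which is all that boundedness requires. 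More serious: the hypothesis only quantifies over \emph{other} functions $F'\leq F$, so your contrapositive must also produce $F'\neq F$. When $p\notin Y$ this is not automatic from the construction; it needs the sandwich lemma \cite[II.2.12]{Bridson-Haefliger} (no flat rectangles in $\HR^\kappa$), which is precisely the step the paper uses to close its argument. Without it, your competitor could a priori coincide with $F$ and yield no contradiction.

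The substantial gap is the fixed\hyph{}point\hyph{}at\hyph{}infinity case, which you defer as ``a separate horospherical argument \dots\ delicate bookkeeping.'' This case is the technical heart of the proof, not bookkeeping: if $G$ fixes only a point $\xi\in\partial\HR^\kappa$ (e.g.\ a parabolic action with total orbit, which does occur), there may be \emph{no} invariant hyperbolic subspace at all---the invariant object is an isotropic line in the Minkowski model---so the projection argument produces no competitor, and $F'$ must be built by hand. The paper devotes Proposition~\ref{propKHT:para} and Remark~\ref{rem:KHT:para} to exactly this: in horospherical coordinates one proves the decomposition
\begin{equation*}
F(g)=\cosh\chi(g)+e^{-\chi(g)}\,\Psi(g),
\end{equation*}
where $\chi$ is the Busemann character at $\xi$ and $\Psi\geq 0$ is of conditionally negative type, and then shows that moving the basepoint distance $1$ along the geodesic toward $\xi$ replaces $\Psi$ by $e^{-2}\Psi$. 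This gives a genuine function of hyperbolic type $F'\leq F$ with $F/F'\leq e^2$, and $F'=F$ forces $\Psi\equiv 0$, i.e.\ $G$ preserves the geodesic through $p$ toward $\xi$---the degenerate case excluded at the outset. Your sketch names the right ingredients (horofunctions, \cat{-1} estimates) but supplies neither the construction of the competitor nor any reason its ratio to $F$ is bounded, so the proof is incomplete exactly where the paper does its real work.
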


\noindent
The above formulation allows the degenerate (but irreducible) cases where the hyperbolic space has dimension~$0$ or~$1$.

\subsection{A logical conclusion}
In conclusion, we come back to the opening theme of this introduction, namely: the various kernels offer a concrete, first-order way to access representations in functional analysis and geometry.

This apparently vague statement can be understood very precisely in the language of first-order \emph{continuous logic}, or \emph{metric model theory}, exposed in~\cite{BYBHU}. We now sketch this interpretation.

One of the most basic examples of a relational metric structure is provided by class of finite sets endowed with kernels of positive type (say real). Expressions such as~\eqref{eq:def:KPT} are examples of first-order metric relations. This class turns out to be a \fraisse class in the sense of metric model theory. The corresponding ultra\hyph{}homogeneous \emph{\fraisse limit} can be identified with the separable infinite\hyph{}dimensional real Hilbert space. That identification amounts exactly to the GNS construction together with the homogeneity properties of Hilbert space.

\smallskip
In that context, it is unsurprising that the class of finite sets endowed with kernels of real hyperbolic type also constitute a \fraisse class, and that the corresponding \fraisse limit is~$\HRI$. The only point needing a proof is that $\HRI$ is ultra\hyph{}homogeneous. This can be deduced from the ultra\hyph{}homogeneity of Hilbert space. Indeed, $\Isom(\HRI)$ is homogeneous, and the action of a point\hyph{}stabiliser on each corresponding sphere in $\Isom(\HRI)$ is isomorphic to the analogous action for Hilbert spaces. Moreover, the distance functions on hyperbolic spheres are completely determined by the corresponding scalar products in Hilbert space and vice-versa (this is particularly clear in the ball model of $\HRI$). This reduces as claimed the ultra\hyph{}homogeneity of $\HRI$ to that of Hilbert space.

\medskip
\itshape More interesting is the case of $\HCI$.\upshape\ As a metric space, $\HCI$ is not triply homogeneous. In particular, the corresponding \emph{age} of finite subsets with their distance function \emph{is not a \fraisse class}.

However, we can enrich the language to include the Cartan argument, seen as a \emph{ternary predicate} in the sense of continuous logic. This is still a first-order relational language. Now, the class of all finite sets endowed with kernels of complex hyperbolic type is again a \fraisse class and its \fraisse limit can be identified with $\HCI$.

The proof of this statement goes along the lines sketched above for $\HRI$ since again it suffices to establish the ultra\hyph{}homogeneity of $\HCI$, but this time seen as a structure with both $\cosh d$ and $\cartan$. After using transitivity, the point is that we can again reduce to the action of $\UU$ on complex Hilbert spheres, because the additional data of the $\UU$-fixed point $x_0$ allows us to recover the complex scalar product. Indeed, its modulus and argument can be computed from $\cosh d$ and from $\cartan(x_0, \cdot, \cdot)$ respectively.

\subsection*{Acknowledgement}
These notes are motivated by the joint work with Pierre Py~\cite{Monod-Py2_arx}; I am very grateful to Pierre for his comments on a draft version.

\section{Preliminaries}\label{sec:prelim}
\subsection{Hyperbolic spaces}
We set the notation right away in the complex case. Our sesquilinear forms are linear in the second variable. Given a complex Hilbert space $H$, we consider the Hermitian Minkowski form $B$ on $\CC\oplus H$ defined by
\begin{equation*}
B\left( z\oplus u, w \oplus v\right) = \ol z w - \her{u}{v}
\end{equation*}
The \emph{complex hyperbolic space} $\HC^\kappa$ is defined as the projectivization of the set of positive vectors in $\CC\oplus H$, where the cardinal $\kappa$ denotes the Hilbertian dimension of $H$, which may be finite or not. Just as in the finite\hyph{}dimensional case, the real part of $B$ induces a Riemannian metric on $\HC^\kappa$ and a common normalisation choice is that the sectional curvature of $\HC^\kappa$ ranges in $[-4, -1]$. Then the corresponding distance function is given by
\begin{equation}\label{eq:dist}
\cosh d(x,y) = \frac{\left|B(X,Y)\right|}{\big(B(X,X) B(Y,Y)\big)^{\frac12}},
\end{equation}
where $X,Y$ are any vectors representing $x,y$ respectively (noting that this is indeed well-defined projectively).

The \emph{boundary at infinity} $\partial\HC^\kappa$ of $\HC^\kappa$ in the sense of metric geometry can be realised in this model as the space of $B$-isotropic lines in $\CC\oplus H$.

\medskip
The definition of the \emph{real hyperbolic space} $\HR^\kappa$ is exactly as above but with everything over the field $\RR$. In particular we see that there is a copy of $\HR^\kappa$ in $\HC^\kappa$. The sectional curvature restricts to the constant~$-1$ on this copy. At the other extreme, each \emph{complex geodesic} $\HC^1$ in $\HC^\kappa$ is isometric to a rescaled copy $\tfrac12\HR^2$ of the real hyperbolic plane and the sectional curvature attains the value~$-4$ precisely on these complex geodesics.

\medskip
We refer to~\cite{Burger-Iozzi-Monod} for more in-depth background on $\HR^\kappa$; the adjustments for the complex case can be taken from the finite\hyph{}dimensional monograph~\cite{Goldman}. (We warn the reader that the former reference uses the East Coast signature convention and that the latter reference chooses a different normalisation of the metric.)

In particular, we note that all hyperbolic spaces are complete by construction in the above Minkowski model; a more intrinsic (model-free) approach to quadratic spaces is discussed in~\cite[\S2-3]{Burger-Iozzi-Monod}. This approach also makes the following fact transparent: given a non-empty set $X\se \HC^\kappa$, there is a unique smallest hyperbolic subspace of $\HC^\kappa$ containing $X$; the latter can be viewed as the hyperbolic space obtained from the form $B$ restricted to closed $\CC$-linear space spanned by any lift of $X$ to $\CC\oplus H$. The subset $X$ is called \emph{total} if that smallest hyperbolic subspace is $\HC^\kappa$ itself. Again, the corresponding facts and terminology are available over~$\RR$.

\subsection{Isometries}\label{sec:isom}
We write $\UU(B)$ or $\UU(1, \kappa)$ for the group of (necessarily continuous) invertible operators preserving $B$; we thus get an isometric representation of the projectivised group $\PU(B)$ on $\HC^\kappa$. Its image is precisely the group $\Isom(\HC^\kappa)^\circ$ of holomorphic isometries of $\HC^\kappa$, which is the neutral component of the isometry group $\Isom(\HC^\kappa)$. The entire group $\Isom(\HC^\kappa)$ also contains the anti-holomorphic isometries.

In the real case, the quotient $\PO(B)$ of $\OO(B)$ (also denoted $\PO(1, \kappa)$ and $\OO(1, \kappa)$) identifies with the group $\Isom(\HR^\kappa)$. When $\kappa$ is finite, $\Isom(\HR^\kappa)^\circ$ is the index two subgroup of orientation-preserving isometries, whilst $\Isom(\HRI)$ is connected. We clarify at this point that our isometry groups are always endowed with the topology of pointwise convergence.

\medskip
The space $\HC^\kappa$ is homogeneous under the action of $\Isom(\HC^\kappa)^\circ$ and the explicit Minkowski model above shows that point\hyph{}stabilisers in $\Isom(\HC^\kappa)^\circ\cong \PU(B)$ are isomorphic to the unitary group $\UU(H)$ of the Hilbert space $H$. The stabiliser in $\Isom(\HC^\kappa)$ is the index two over\hyph{}group $\tUU(H)$ of $\UU(H)$ containing the complex conjugation.

In the real case, point\hyph{}stabilisers in $\Isom(\HR^\kappa)$ identify with $\OO(H)$ and for $\kappa$ finite, stabilisers in $\Isom(\HR^\kappa)^\circ$ are $\SO(H)$. A fundamental fact is that our isometric representations will map point stabilisers to point stabilisers:

\begin{lem}\label{lem:fixed:pt}
For any continuous representation of $\Isom(\HC^\kappa)$ by isometries of a complete \cat0 space $X$, the stabiliser of a point in $\HC^\kappa$ fixes a point in $X$. The corresponding statement also holds for $\HR^\kappa$.
\end{lem}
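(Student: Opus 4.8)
The plan is to reduce the statement to a fixed-point property for the point-stabilisers and then exploit the special structure of unitary and orthogonal groups. By the explicit Minkowski model recalled above, the stabiliser of a point $p\in\HC^\kappa$ in $\Isom(\HC^\kappa)$ is the group $\tUU(H)$, with holomorphic index-two subgroup $\UU(H)$, while in the real case the stabiliser of a point of $\HR^\kappa$ in $\Isom(\HR^\kappa)$ is $\OO(H)$. Since the given action of $\Isom(\HC^\kappa)$ is continuous and these stabilisers are closed subgroups carrying exactly the strong operator topology, it suffices to prove that every continuous isometric action of $\UU(H)$, of $\OO(H)$, or of $\tUU(H)$ on a complete \cat0 space has a fixed point.

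The core of the argument is that the orbits of such an action are bounded. When $\kappa$ is finite the relevant group is compact, so the orbit of any point is the continuous image of a compact group and hence bounded. When $\kappa$ is infinite I would invoke the strong boundedness (property (OB) in the sense of Rosendal) of the infinite-dimensional unitary and orthogonal groups, which asserts that every continuous isometric action of $\UU(H)$ or $\OO(H)$ on a metric space has bounded orbits; the mechanism behind this is an absorption (``swindle'') argument using a decomposition of $H$ into countably many copies of itself. This infinite-dimensional boundedness is precisely the main obstacle, everything else being soft \cat0 geometry.

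Granting boundedness, a fixed point is produced by the circumcenter construction: in a complete \cat0 space every bounded subset admits a unique circumcenter, and this circumcenter is preserved by any isometry preserving the subset. Applying this to the (bounded) closure of an orbit of $\UU(H)$, respectively $\OO(H)$, yields a fixed point and thereby settles both the holomorphic complex case and the real case.

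It remains to pass from $\UU(H)$ to the full stabiliser $\tUU(H)$. The fixed-point set $X^{\UU(H)}$ is a nonempty closed convex subset of $X$, hence itself a complete \cat0 space, and it is preserved by the complex conjugation normalising $\UU(H)$, whose class generates $\tUU(H)/\UU(H)\cong\ZZ/2\ZZ$. An order-two isometry of a complete \cat0 space fixes the midpoint of the geodesic between any point and its image; applying this on $X^{\UU(H)}$ produces a point fixed by all of $\tUU(H)$, which completes the proof.
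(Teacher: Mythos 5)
Your proof is correct and follows essentially the same route as the paper: the Cartan fixed-point (circumcenter) theorem reduces the statement to boundedness of orbits of the point-stabiliser, which follows from compactness when $\kappa$ is finite and from the Ricard--Rosendal strong boundedness of $\UU(H)$ and $\OO(H)$ when $\kappa$ is infinite. Your final passage from $\UU(H)$ to $\tUU(H)$ via the fixed-point set and the midpoint trick is valid but unnecessary: a $\tUU(H)$-orbit is a union of two $\UU(H)$-orbits, hence already bounded, so the circumcenter argument applies to the full stabiliser directly.
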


\begin{proof}
By the Cartan fixed-point theorem~\cite[II.2.8]{Bridson-Haefliger}, it suffices to know that this stabiliser has a bounded orbit in $X$. If $\kappa$ is finite, this follows from compactness. If $\kappa$ is infinite, this is a result of Ricard--Rosendal even without assuming continuity~\cite{Ricard-Rosendal}; this holds for both $\UU$ and $\OO$, see p.~190 in~\cite{Ricard-Rosendal}.
\end{proof}

Recall that the \emph{translation length} $\ell(g)$ of an isometry $g$ of an arbitrary metric space $X$ is $\ell(g)=\inf_{x\in X} d(g x, x)$. In the \cat0 case, this quantity can be recovered as $\lim_{n\to\infty} \tfrac1n d(g^n x, x)$ for any choice of $x\in X$, see~\cite[6.6(2)]{Ballmann-Gromov-Schroeder}. In particular, it follows that for any function of complex hyperbolic type $(F, \alpha)$ on a group $G$ we have
\begin{equation}\label{eq:length:F}
\ell(g) =   \log \lim_{n\to\infty} F(g^n)^{\frac1n} \kern10mm(\forall\,g\in G)
\end{equation}
where $g$ is viewed as an isometry of $\HC^\kappa$ according to Theorem~\ref{thm:C:rep}. The case $\alpha=0$ yields the corresponding statement for functions of real hyperbolic type, already recorded in~\cite[\S4.B]{Monod-Py2_arx}.

\subsection{Non-elementary and irreducible representations}\label{sec:non-el}
In accordance with the terminology used more generally in \cat{-1} spaces, an isometric action of a group $G$ on $\HC^\kappa$ is called \emph{elementary} if $G$ preserves a finite set in $\HC^\kappa \cup \partial \HC^\kappa$. This happens if an only if $G$ either fixes a point in $\HC^\kappa \cup \partial \HC^\kappa$ or preserves a geodesic line in $\HC^\kappa$.

If the $G$-action is non\hyph{}elementary, then there exists a unique minimal $G$-invariant hyperbolic subspace in $\HC^\kappa$ (which might be a complex geodesic). This is proved as in Proposition~4.3 of~\cite{Burger-Iozzi-Monod}. We call the action \emph{minimal} if this subspace is $\HC^\kappa$ itself; equivalently, if every $G$-orbit is total.

Recalling that the isometric $G$-action on $\HC^\kappa$ corresponds to a projective representation on $\CC\oplus H$ ranging in $\PU(B)$, we have the following equivalence when $\kappa\geq 2$:\itshape\ the $G$-action is non\hyph{}elementary and minimal if and only if the projective representation is irreducible.\upshape\ This holds mutatis mutandis in the simpler case of $\HR^\kappa$, see~\cite{Burger-Iozzi-Monod}.

For perfect groups, a basic non\hyph{}elementarity criterion is available (this idea was used in~\cite{Monod-Py2_arx} in the real case).

\begin{lem}\label{lem:non-elem}
Let $(F,\alpha)$ be a function of complex hyperbolic type on a group $G$ and suppose that $\lim_{n\to\infty} F(g^n)^{\frac1n} >1$ for some $g\in G$. If $G$ is perfect, then the corresponding $G$-action on $\HC^\kappa$ is non\hyph{}elementary.

When $G$ is a topological group and $(F,\alpha)$ is continuous, the same conclusion holds under the weaker assumption that $G$ is topologically perfect.

Finally, the conclusion also holds if $g$ belongs to some subgroup of $G$ which is perfect, respectively topologically perfect.
\end{lem}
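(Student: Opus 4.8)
The plan is to read everything off Theorem~\ref{thm:C:rep} and \eqref{eq:length:F}. Realizing $(F,\alpha)$ as an isometric $G$-action on $\HC^\kappa$ with base point $p$, the hypothesis $\lim_n F(g^n)^{1/n}>1$ becomes $\ell(g)=\log\lim_n F(g^n)^{1/n}>0$ by \eqref{eq:length:F}; in the \cat{-1} space $\HC^\kappa$ this positivity of the stable translation length means that $g$ is loxodromic, hence fixes exactly two boundary points $g^+,g^-\in\partial\HC^\kappa$ (its attracting and repelling endpoints). I would then reduce all three assertions to a single claim: \emph{a group that is perfect (resp.\ topologically perfect, with the action continuous) cannot act elementarily on $\HC^\kappa$ while containing a loxodromic element}. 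The first two assertions are this claim for $G$ itself, using that the action of Theorem~\ref{thm:C:rep} is continuous when $(F,\alpha)$ is. For the last assertion, if $g$ lies in a (topologically) perfect subgroup $H\se G$ and the $G$-action were elementary, then $H$ would preserve the same finite subset of $\HC^\kappa\cup\partial\HC^\kappa$ and so act elementarily as well; applying the claim to $H$, with its still loxodromic element $g$, gives the contradiction and hence non-elementarity of the $G$-action.

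To prove the claim, suppose $H$ is (topologically) perfect, acts elementarily, and contains a loxodromic $h$ with $\ell(h)>0$. By the dichotomy recalled before the lemma (elementary $\Leftrightarrow$ $H$ fixes a point of $\HC^\kappa\cup\partial\HC^\kappa$ or preserves a geodesic line), there are three cases. If $H$ fixes a point $q\in\HC^\kappa$, then $h$ fixes $q$, so $\ell(h)=0$ --- a contradiction that needs no perfectness.

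If $H$ fixes a point $\xi\in\partial\HC^\kappa$, then since $h\in H$ fixes $\xi$ and $h$ is loxodromic we must have $\xi\in\{h^+,h^-\}$. The associated Busemann character $\beta_\xi\colon H\to\RR$, $\beta_\xi(u)=b_\xi(up)-b_\xi(p)$, is a homomorphism (because each $u\in H$ fixes $\xi$, one has $b_\xi\circ u=b_\xi+\beta_\xi(u)$), and iterating along $h^n p\to\xi$ shows $|\beta_\xi(h)|=\ell(h)>0$. As $\RR$ is abelian, $\beta_\xi$ vanishes on $[H,H]$, so a perfect $H$ forces $\beta_\xi(h)=0$, a contradiction. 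In the topological case $\beta_\xi$ is continuous, since $b_\xi$ is $1$-Lipschitz and $u\mapsto up$ is continuous, hence $\beta_\xi$ vanishes on the topologically perfect $H$ for the same contradiction.

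Finally, if $H$ preserves a geodesic line $L$ but fixes no boundary point, it permutes the two endpoints $\{L^+,L^-\}$ via an orientation character $\sigma\colon H\to\ZZ/2$; since $h$ preserves $L$ and $\ell(h)>0$, it translates along $L$ and fixes both endpoints, so $h\in\ker\sigma$. If $H$ is perfect, the index-two subgroup $\ker\sigma$ equals $H$, so $H$ fixes $L^+$ and we are back to the previous case. If $H$ is merely topologically perfect, one checks that $\sigma$ is continuous --- orientation-reversing isometries of a line are bounded away from $\Id$ in the topology of pointwise convergence, so $\ker\sigma$ is open --- whence $\sigma$ again vanishes and $H$ fixes $L^+$. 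This completes the claim and the lemma. I expect the main obstacle to be the boundary analysis: setting up the Busemann character, verifying $|\beta_\xi(h)|=\ell(h)$, and, in the topological setting, checking the continuity of $\beta_\xi$ and of $\sigma$ so that (topological) perfectness can be invoked.
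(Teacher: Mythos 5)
Your proof is correct and follows essentially the same route as the paper: positive translation length via \eqref{eq:length:F}, the elementarity dichotomy, and (topological) perfection killing both the endpoint-swapping character and the Busemann character, with the subgroup case handled by the same restriction observation. The only difference is one of presentation --- the paper compresses the final contradiction into ``$g$ would be parabolic, contradicting $\ell(g)>0$ in \cat{-1} geometry,'' whereas you unwind it as $|\beta_\xi(h)|=\ell(h)>0$ and spell out the continuity of the two characters --- but the substance is identical.
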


\begin{proof}
There is no loss of generality in assuming that $G$ itself is the (topologically) perfect subgroup. The assumption on $g$ implies that $g$ has positive translation length because of the formula~\eqref{eq:length:F}. In particular $g$ cannot fix a point in $\HC^\kappa$. Supposing for a contradiction that the $G$-action is elementary, it follows that $G$ fixes either a point at infinity or a pair of points at infinity. The (topological) perfection of $G$ implies that the former is the case and that moreover $G$ preserves all horospheres based at that point. In particular, $g$ is parabolic; in \cat{-1} geometry, this contradicts the positivity of the translation length.
\end{proof}

\subsection{The Cartan argument}\label{sec:cartan}
We first recall the intrinsic geometric definition. Given three points $x,y,z$ in a complex hyperbolic space $\HC^\kappa$, the \emph{Cartan argument} can be defined by
\begin{equation*}
\cartan(x,y,z) = \int_{\Delta(x,y,z)} \omega\kern3mm \in \left( -\tfrac\pi2, \tfrac\pi2 \right)
\end{equation*}
where $\omega$ is the K\"ahler form and $\Delta(x,y,z)$ is any simplex with geodesic sides on the vertices $x,y,z$. A number of points have to be clarified to make this a well-posed definition. First, there is nothing special if the cardinal $\kappa$ is infinite since any triple of points is contained in a copy of $\HC^2$ in $\HC^\kappa$. Secondly, the integral on the right hand side does not depend on the particular choice of the simplex since $\omega$ is closed. Finally, we normalise $\omega$ in such a way that the range of the Cartan argument is the whole open interval $(-\pi/2, \pi/2)$.

\begin{rem}\label{rem:omega:area}
Since our normalisation of the metric is such that complex geodesics have constant sectional curvature~$-4$, it follows that the K\"ahler form as normalised above restricts to \emph{twice} the area form on complex geodesics.
\end{rem}

An equivalent definition by means of the Hermitian triple product, closer to the original definition by Cartan~\cite[p.~167]{ECartan32}, is as follows. Suppose that $x,y,z$ are represented respectively by positive vectors $X, Y, Z$ for a Hermitian form $B$ of index one. The triple product
\begin{equation}\label{eq:triple}
\left\langle X, Y, Z\right\rangle = B(X, Y) \, B(Y, Z)\, B(Z, X)
\end{equation}
scales by a positive scalar whenever any of the vectors is multiplied by a complex scalar; therefore its argument is represented by an element of $\SSS^1$ that depends only on $x,y,z$. Moreover, this element of $\SSS^1$ has positive real part: this follows algebraically from the fact that $X,Y,Z$ are positive vectors. Therefore we can represent this argument by an element of $(-\pi/2, \pi/2)$, and this coincides with $\cartan(x,y,z)$.

Since this algebraic definition is the one that we shall use, we recall why it satisfies the following basic properties (which are clear for the definition in terms of the K\"ahler form).

\begin{lem}\label{lem:cartan}
The Cartan argument is an alternating $2$-cocycle.
\end{lem}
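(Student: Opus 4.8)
The plan is to work throughout with the algebraic description via the triple product~\eqref{eq:triple}: write $\alpha = \cartan$, fix positive representatives $X_j$ of the points $x_j$, and set $b_{jk} = B(X_j, X_k)$, so that $\alpha(x_i, x_j, x_k)$ is the argument of $\left\langle X_i, X_j, X_k\right\rangle = b_{ij}\, b_{jk}\, b_{ki}$ lying in $(-\pi/2, \pi/2)$. The Hermitian symmetry $B(V,W) = \overline{B(W,V)}$ is the single input that drives both assertions.

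For the alternating property, observe first that the triple product is invariant under cyclic permutation of its arguments, directly from the defining product $b_{ij}\,b_{jk}\,b_{ki}$. It therefore suffices to treat one transposition. Swapping the last two arguments gives $\left\langle X, Z, Y\right\rangle = B(X,Z)\,B(Z,Y)\,B(Y,X)$, and applying Hermitian symmetry to each factor rewrites this as $\overline{B(X,Y)\,B(Y,Z)\,B(Z,X)} = \overline{\left\langle X, Y, Z\right\rangle}$; hence the argument is negated. Since cyclic permutations are even and exhaust $A_3$, while any transposition together with $A_3$ generates the whole symmetric group on three letters, it follows that every permutation multiplies $\alpha$ by its sign. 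In particular $\alpha$ vanishes when two arguments coincide, and $\beta$ is symmetric.

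For the cocycle property I would form the multiplicative analogue of the alternating sum defining $d\alpha(x_0,x_1,x_2,x_3)$, namely the ratio
$$\frac{\left\langle X_1, X_2, X_3\right\rangle \, \left\langle X_0, X_1, X_3\right\rangle}{\left\langle X_0, X_2, X_3\right\rangle \, \left\langle X_0, X_1, X_2\right\rangle}.$$
Expanding each triple product into its three factors $b_{jk}$ and cancelling those common to numerator and denominator leaves exactly $b_{13}\, b_{31} / (b_{02}\, b_{20}) = |b_{13}|^2 / |b_{02}|^2$, once more by Hermitian symmetry. This quantity is a strictly positive real number, so the argument of the ratio is zero; unwinding, this says precisely that $d\alpha(x_0,x_1,x_2,x_3) \equiv 0 \pmod{2\pi}$.

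The one genuine subtlety is upgrading this congruence to an honest equality in $\RR$, and here the range constraint is decisive. Each of the four values of $\alpha$ entering $d\alpha$ lies in the open interval $(-\pi/2, \pi/2)$, so their alternating combination lies in $(-2\pi, 2\pi)$; the only integer multiple of $2\pi$ in that interval is $0$. Combined with the congruence just obtained, this forces $d\alpha(x_0,x_1,x_2,x_3) = 0$, completing the proof that $\alpha$ is a $2$-cocycle. This final step is the crux: the algebra alone delivers the identity only modulo $2\pi$, and it is the positivity of the triple product --- equivalently, the fact that the $X_j$ are positive vectors --- that pins down the correct $\RR$-valued cocycle rather than merely a class in $\RR/2\pi\ZZ$.
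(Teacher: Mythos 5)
Your proof is correct and follows essentially the same route as the paper's: both derive the alternating property from the Hermitian symmetry of $B$, both obtain the cocycle identity modulo $2\pi$ from the multiplicative structure of the $\SSS^1$-valued triple product (your explicit expansion and cancellation is exactly the unwound form of the paper's remark that this element of $\SSS^1$ is the coboundary of the image of $B$ in $\SSS^1$), and both conclude with the identical range argument that four terms each of absolute value less than $\pi/2$ force the relation to hold exactly in $\RR$, not merely in $\RR/2\pi\ZZ$.
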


\begin{proof}
Since $B$ is Hermitian, $\cartan$ is alternating. Moreover, the element of $\SSS^1$ defined by $\left\langle X, Y, Z\right\rangle$ is a (multiplicative) cocycle because it is actually a coboundary: the coboundary of the image of $B$ in $\SSS^1$. It follows that $\cartan$ satisfies the cocycle equation modulo $2\pi \ZZ$ in $\RR$. But this equation has only four terms, all less than~$\pi/2$ is absolute value. Therefore the cocycle relation is already satisfied in~$\RR$.
\end{proof}

\begin{rem}\label{rem:cohomologous}
There is no mystery as to why the two definitions of $\cartan$ coincide. Indeed, the fact that any pair of points in $\HC^n$ can be interchanged by an element of $G=\Isom(\HC^n)^\circ$ shows that there is no non-zero alternating $G$-invariant function on $\HC^n\times \HC^n$ . This implies that every class in degree two continuous real cohomology of $G$ is represented by a \emph{unique} alternating $2$-cocycle on $\HC^n$.

This shows that we only need to make the right normalisation choice for $\cartan$ since the cohomology $\hc^2(G, \RR)$ is well-known to be one\hyph{}dimensional (it suffices here to argue with $n$ finite).

The same argument shows that if $X$ is a Riemannian symmetric space and $G=\Isom(X)$, then every class in the second continuous cohomology of $G$ with real coefficients is represented by a \emph{unique} alternating $G$-invariant cocycle on $X^3$.
\end{rem}

The Cartan argument detects totally real subspaces as follows. (For a similar statement valid also at infinity, see~\cite[2.1]{Burger-IozziPSUn1_pb}.)

\begin{lem}\label{lem:tot:real}
Let $X\se\HC^\kappa$ be any subset. The Cartan argument vanishes on all triples in $X$ if an only if $X$ is contained in a totally real subspace of $\HC^\kappa$.
\end{lem}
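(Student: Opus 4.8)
The plan is to argue both implications directly from the algebraic description of the Cartan argument, using the triple product $\left\langle X,Y,Z\right\rangle = B(X,Y)\,B(Y,Z)\,B(Z,X)$ of positive lifts. The key fact, already recorded in the definition, is that $\left\langle X,Y,Z\right\rangle$ always has positive real part, so $\cartan(x,y,z)=0$ holds \emph{precisely} when this triple product is a strictly positive real number. For the easy implication, suppose $X$ lies in a totally real subspace, realised by a real-linear subspace $W_\RR\se\CC\oplus H$ on which $B$ is real-valued. Choosing lifts of the points of $X$ inside $W_\RR$, every pairwise value $B(X,Y)$ is real, hence so is each triple product; being positive real, its argument vanishes, and $\cartan\equiv 0$ on $X$.

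The substance is the converse. First I would fix a basepoint $x_0\in X$ with a lift $X_0$, and for each $x\in X$ rescale its lift $X$ by a unit complex scalar so that $B(X_0,X)\in\RR_{>0}$; this is possible since $|B(X_0,X)|=\cosh d(x_0,x)\,(B(X_0,X_0)B(X,X))^{1/2}>0$. With these normalised lifts, for any $x,y\in X$ the hypothesis $\cartan(x_0,x,y)=0$ says that $B(X_0,X)\,B(X,Y)\,B(Y,X_0)$ is positive real. The outer factors $B(X_0,X)$ and $B(Y,X_0)=\overline{B(X_0,Y)}$ are positive reals by the normalisation, so $B(X,Y)$ is forced to be a positive real as well. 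Thus, after this single simultaneous phase adjustment against $x_0$, every pairwise inner product $B(X,Y)$ with $x,y\in X$ is real.

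It then remains to see that the closed real-linear span $W_\RR$ of the normalised lifts is a genuinely totally real subspace containing $X$. That $B$ is real-valued on $W_\RR$ is immediate from the previous step (and passes to the closure by continuity). The point to check is $W_\RR\cap iW_\RR=\{0\}$, which is what distinguishes a totally real subspace from one containing a complex line. If $v\in W_\RR\cap iW_\RR$, then for every $u\in W_\RR$ the number $B(u,v)$ is simultaneously real (as $v\in W_\RR$) and purely imaginary (writing $v=iw$ with $w\in W_\RR$ gives $B(u,v)=i\,B(u,w)$), hence $0$; by $\CC$-linearity $v$ is then $B$-orthogonal to the whole complex span $V$ of the lifts, while $v\in V$. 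Since $V$ contains the positive vector $X_0$ and the ambient form has signature $(1,\kappa)$, the orthogonal complement $X_0^\perp$ is negative definite, so $B|_V$ is nondegenerate and $v=0$. Hence $W_\RR$ is a real form, its projectivised positive cone is a totally real copy of $\HR^m$ in $\HC^\kappa$, and $X$ lies in it.

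I expect the main obstacle to be exactly this last step. The clever part is noticing that a single phase normalisation against one basepoint already makes \emph{all} pairwise products real (so that only the triples through $x_0$ are actually used); the remaining care lies in verifying that realness of $B$ on the real span yields an honestly totally real subspace rather than merely one on which $B$ happens to be real while secretly containing a complex direction. The decisive input there is the nondegeneracy of $B|_V$ forced by the signature $(1,\kappa)$, and in the infinite-dimensional case one must take closed spans and invoke continuity of $B$ throughout.
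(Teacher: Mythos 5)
Your proof is correct, but it takes a genuinely different route from the paper's. The paper handles the substantial direction with the kernel machinery: vanishing of $\cartan$ on triples of $X$ means $(\cosh d|_X,0)$ is a kernel of complex hyperbolic type, hence $\cosh d|_X$ is a kernel of \emph{real} hyperbolic type (Remark~\ref{rem:beta:RC}); Proposition~\ref{propdef:KHT} then yields an isometric copy of $\HR^{\kappa'}$ in $\HC^\kappa$ containing $X$, and the paper cites the fact that such isometric copies coincide with the totally real subspaces, as in~\cite[II.10.16]{Bridson-Haefliger}. You instead work entirely in the Minkowski model: after a single phase normalisation of the lifts against a basepoint $x_0$ (so that $B(X_0,X)>0$), the hypothesis on triples through $x_0$ forces every pairwise product $B(X,Y)$ to be a positive real, and you then verify by hand that the closed real span $W_\RR$ of the normalised lifts is an honest real form --- the delicate point being $W_\RR\cap iW_\RR=\{0\}$, which you correctly deduce from the negative definiteness of $X_0^\perp$, i.e.\ from nondegeneracy of $B$ on the complex span. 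Note that your normalisation is exactly the non-equivariant hyperboloid lift of Remark~\ref{rem:proof:KCHT:Phi}, under which $B(X,Y)=e^{i\cartan(x_0,x,y)}\cosh d(x,y)$; but your argument replaces both the GNS/kernel step and the external geometric citation by explicit linear algebra. What each approach buys: the paper's proof is three lines given the formalism it has already built and stays consistently in the kernel language; yours is self-contained, applies verbatim in infinite dimensions without reduction to a finite-dimensional reference, and makes explicit the bonus observation that vanishing of the Cartan argument on triples through one basepoint already forces vanishing on all triples (in the paper this is only implicit, via the cocycle identity and Remark~\ref{rem:proofs:KCHT}).
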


\begin{proof}
The vanishing of $\cartan$ implies that $\cosh d$ is a kernel of real hyperbolic type on $X$ (Remark~\ref{rem:beta:RC}); therefore, by Proposition~\ref{propdef:KHT}, there is for some $\kappa'$ an isometric copy of $\HR^{\kappa'}$ in $\HC^\kappa$ containing $X$. The fact that these isometric copies coincide with the totally real subspaces is proved as in the finite\hyph{}dimensional case, see e.g.~\cite[II.10.16]{Bridson-Haefliger}. The converse follows from the definition of the Cartan argument.
\end{proof}

\section{Kernel yoga and functional pilates}
\begin{flushright}
\begin{minipage}[t]{0.7\linewidth}\itshape\small
\begin{flushright}
Comme d'habitude, nous commen\c{c}ons par une sorte\\
d'axiomatisation des propri\'et\'es que nous avons en vue.
\end{flushright}
\begin{flushright}
\upshape\small
Grothendieck~\cite{Grothendieck53} page 158.
\end{flushright}
\end{minipage}
\end{flushright}
\subsection{The characterisation of kernels}\label{sec:yoga}
The main step in the proof of Proposition~\ref{prop:CHT:char} is the following.

\begin{prop}\label{prop:KCHT}
Let $X$ be a set with a kernel of complex hyperbolic type $(\beta, \alpha)$.

Then there is a map $f\colon X\to \HC^\kappa$ with total image in a complex hyperbolic space (for some cardinal $\kappa$) such that
\begin{equation*}
\beta(x,y) = \cosh d\big(f(x), f(y)\big) \kern5mm\text{and} \kern5mm \alpha(x, y, z) = \cartan \big(f(x), f(y), f(z)\big)
\end{equation*}
hold for all $x,y,z\in X$.
\end{prop}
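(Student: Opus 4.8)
The plan is to mimic the GNS construction, extracting the Hilbert part $H$ of the Minkowski model from the positive-type kernel built into Definition~\ref{def:KCHT}, and then to choose the lifts of the points $f(x)$ so carefully that both $\beta$ and $\alpha$ fall directly out of the form $B$. First I would fix a base point $x_0\in X$. By hypothesis the kernel $\Phi_{x_0}(x,y)=\beta(x,x_0)\beta(x_0,y)-e^{i\alpha(x_0,x,y)}\beta(x,y)$ is of complex positive type, so the complex GNS construction yields a complex Hilbert space $H$ and a map $h\colon X\to H$ with $\her{h(x)}{h(y)}=\Phi_{x_0}(x,y)$ and $h(X)$ total in $H$. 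Working in the Minkowski model $\CC\oplus H$ with the form $B$, I would set $F(x)=\beta(x,x_0)\oplus h(x)$ and let $f(x)$ be the projective class of $F(x)$.

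The heart of the argument is the single identity $B(F(x),F(y))=e^{i\alpha(x_0,x,y)}\beta(x,y)$, obtained by expanding $B$, using that $\beta$ is real and symmetric, and cancelling the term $\beta(x,x_0)\beta(x_0,y)$ against the matching term inside $\Phi_{x_0}$. In particular $B(F(x),F(x))=\beta(x,x)=1>0$ (since $\alpha$ vanishes on the diagonal), so each $F(x)$ is a genuine positive vector and $f$ is well defined. Formula~\eqref{eq:dist} then gives $\cosh d(f(x),f(y))=|B(F(x),F(y))|=\beta(x,y)$ at once.

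For the Cartan argument, the triple product~\eqref{eq:triple} of $F(x),F(y),F(z)$ has positive modulus $\beta(x,y)\beta(y,z)\beta(z,x)$ and argument $\alpha(x_0,x,y)+\alpha(x_0,y,z)+\alpha(x_0,z,x)$. Invoking the alternating cocycle relation $d\alpha(x_0,x,y,z)=0$ collapses this sum to exactly $\alpha(x,y,z)$, which lies in $(-\tfrac\pi2,\tfrac\pi2)$ and therefore represents $\cartan(f(x),f(y),f(z))$ in the sense of Section~\ref{sec:cartan}. Finally, totality follows from $\Phi_{x_0}(x_0,x_0)=0$, whence $h(x_0)=0$ and $F(x_0)=1\oplus 0$; subtracting multiples of $F(x_0)$ recovers each $0\oplus h(x)$, so totality of $h(X)$ in $H$ shows that the closed span of $\{F(x):x\in X\}$ is all of $\CC\oplus H$, and $f(X)$ is total in the space $\HC^\kappa$ attached to it, with $\kappa=\dim H$.

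I do not expect a genuine obstacle: once the lift $F(x)=\beta(x,x_0)\oplus h(x)$ is written down, every claim reduces to a short algebraic identity. The only point requiring care is the bookkeeping around the cocycle and alternation conditions, since it is precisely the $2$-cocycle relation for $\alpha$, rather than any analytic input, that forces the three anchored arguments to telescope into $\alpha(x,y,z)$. This is the one place where the full strength of Definition~\ref{def:KCHT} is used.
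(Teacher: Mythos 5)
Your proposal is correct and follows essentially the same route as the paper's own proof: the GNS construction applied to the anchored kernel $\Phi_{x_0}$, the lift $F(x)=\beta(x,x_0)\oplus h(x)$ into the Minkowski space $\CC\oplus H$, the identity $B(F(x),F(y))=e^{i\alpha(x_0,x,y)}\beta(x,y)$, the telescoping of the three anchored arguments via the alternating cocycle relation, and the totality argument from $F(x_0)=1\oplus 0$. Your additional observations (that $F(x)$ is a genuine positive vector since $\alpha$ vanishes on repeated entries, and that the collapsed sum already lies in $(-\tfrac\pi2,\tfrac\pi2)$ so no modular ambiguity arises) are points the paper leaves implicit, and they are handled correctly.
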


\begin{proof}
Choose some $x_0\in X$. The GNS construction for complex kernels of positive type provides a complex Hilbert space $H$ and a map $h\colon X\to H$ such that
\begin{equation}\label{eq:GNS}
\her{h(x)}{h(y)} =  \beta(x, x_0) \beta(x_0, y) - e^{i \alpha(x_0,x,y)} \, \beta(x,y)
\end{equation}
holds for all $x,y\in X$. Now endow the space $\CC \oplus H$ with the Minkowski Hermitian form $B$ and define $f\colon X\to \CC\oplus H$ by $f(x) = \beta(x, x_0)\oplus h(x)$. Then for all $x,y\in X$ we have
\begin{equation}\label{eq:B:beta}
B\big(f(x), f(y)\big) = e^{i \alpha(x_0,x,y)} \, \beta(x,y).
\end{equation}
This shows that $\beta(x,y)= \left|B(f(x), f(y)) \right| = \cosh d(f(x), f(y))$ for all $x,y$.

Next, consider any points $x,y,z\in X$. Computing the Hermitian triple product $\left\langle f(x), f(y), f(z) \right\rangle$ using~\eqref{eq:B:beta}, we find
$$\cartan \big(f(x), f(y), f(z)\big) = \alpha(x_0, x, y) +  \alpha(x_0, y, z) +  \alpha(x_0, z, x).$$
Using that $\alpha$ is an alternating cocycle, we conclude $\cartan(f(x), f(y), f(z)) =  \alpha(x, y, z)$ as desired.

Finally, the fact that $f(X)$ is total in $\CC \oplus H$ follows from the facts that $h(X)$ is total in $H$ and that $f(x_0)$ is $1\oplus 0$ since~\eqref{eq:GNS} forces $\|h(x_0)\|=0$.
\end{proof}

\begin{proof}[End of the proof of Proposition~\ref{prop:CHT:char}]
In view of Proposition~\ref{prop:KCHT}, it remains only to verify that conversely the pair $(\cosh d, \cartan)$ is a kernel of complex hyperbolic type on $\HC^\kappa$ itself. Given Lemma~\ref{lem:cartan}, it suffices to prove that the kernel
\begin{equation}\label{eq:proof:KCHT}
\Phi(x,y) = \cosh d(x, x_0) \cosh d(x_0, y) - e^{i \cartan(x_0, x, y)} \, \cosh d(x,y)
\end{equation}
is of positive type on $\HC^\kappa$ for all $x_0$. In view of the transitivity of $\UU(B)$ on $\HC^\kappa$, we can suppose that $x_0$ corresponds to the vector $1\oplus 0$. We define a (non\hyph{}equivariant!) lift $\HC^\kappa\to\CC\oplus H$ mapping $x$ to $X$ as follows. Since $x$ is a positive line, we can choose $X$ such that $B(X, X)=1$ and such that moreover its first coordinate is in $\RR_{\geq 0}$. It follows that $X=\sqrt{1+\|\tilde x\|^2} \oplus \tilde x$ for some $\tilde x\in H$ and that $\sqrt{1+\|\tilde x\|^2}=\cosh d(x_0, x)$. In particular $\widetilde{x_0}=0$. Moreover, the definition~\eqref{eq:dist} shows that $|B(X, Y)| = \cosh d(x, y)$. Now the argument $\cartan(x_0, x, y)$ of the triple product $\left\langle X_0, X, Y\right\rangle$ as defined in~\eqref{eq:triple} is just the argument of $B(X, Y)$. Summarising, the kernel~\eqref{eq:proof:KCHT} reads
\begin{equation}\label{eq:proof:KCHT:Phi}
\Phi(x, y) = \sqrt{1+\|\tilde x\|^2} \, \sqrt{1+\|\tilde{\smash{y}\vphantom{x}}\|^2} - B(X,Y) = \her{\tilde x}{\tilde y}
\end{equation}
In conclusion, $\Phi$ is indeed of positive type.
\end{proof}

\begin{rem}\label{rem:proof:KCHT:Phi}
The explicit lift $x\mapsto X$ used above, which gives~\eqref{eq:proof:KCHT:Phi}, corresponds to the hyperboloid model in the real case. Although that model is \emph{not equivariant} in the complex case, it can be convenient for calculations. For instance, we used $B(X,Y) = e^{i \cartan(x_0, x, y)} \, \cosh d(x, y)$.
\end{rem}

\begin{rem}\label{rem:proofs:KCHT}
Since the proof of Proposition~\ref{prop:KCHT} starts with an arbitrary choice for $x_0\in X$, we see that we can replace ``for all $x_0$'' by ``there exists $x_0$'' in Definition~\ref{def:KCHT}, just as in the real case. We also see that the case $\alpha=0$ of Proposition~\ref{prop:CHT:char} gives Proposition~\ref{propdef:KHT}.
\end{rem}

\subsection{The equivariant case}
The arguments of the previous section are not functorial because of the asymmetry in the role of $x_0$ and of the other points. This will be mended by the following.

\begin{thm}\label{thm:KCHT-char}
Let $X$ be a set with a kernel of complex hyperbolic type $(\beta, \alpha)$. 

The map $f\colon X\to \HC^\kappa$ of Proposition~\ref{prop:KCHT} is unique up to a unique holomorphic isometry. Therefore, there is a canonical representation of the group of bijections of $X$ preserving $\beta$ and $\alpha$ to $\Isom(\HC^\kappa)^\circ$ for which $f$ is equivariant.
\end{thm}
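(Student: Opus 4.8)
The plan is to reduce everything to a rigidity statement about the linear Minkowski model: two total lifts of the same kernel data differ by a unique $B$-unitary. Throughout I fix a basepoint $x_0\in X$ and recall from the proof of Proposition~\ref{prop:KCHT} that the construction produces a \emph{canonical} lift $F\colon X\to\CC\oplus H$ of $f$, normalised by~\eqref{eq:B:beta} so that $B(F(x),F(y))=e^{i\alpha(x_0,x,y)}\beta(x,y)$. In particular this Gram matrix depends only on $(\beta,\alpha)$ and on $x_0$, not on the chosen realisation.

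First I would show that any second map $f'\colon X\to\HC^{\kappa'}$ satisfying the same two identities with total image admits a lift $F'$ with \emph{the same} Gram matrix. Starting from arbitrary positive representatives, rescale each $F'(x)$ so that $B'(F'(x),F'(x))=1$; this pins down the modulus, and since $\cosh d(f'(x_0),f'(x))=\beta(x_0,x)$ we get $|B'(F'(x_0),F'(x))|=\beta(x_0,x)$. Rotating each $F'(x)$ by a unit phase, arrange $B'(F'(x_0),F'(x))\in\RR_{\geq 0}$, hence equal to $\beta(x_0,x)=e^{i\alpha(x_0,x_0,x)}\beta(x_0,x)$ since $\alpha$ is alternating. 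It then remains to fix the phase of $B'(F'(x),F'(y))$ for general $x,y$, and here the triple product~\eqref{eq:triple} is the key: since $B'(F'(x_0),F'(x))$ and $B'(F'(y),F'(x_0))=\overline{B'(F'(x_0),F'(y))}$ are both real positive, the argument of $\langle F'(x_0),F'(x),F'(y)\rangle$ equals that of $B'(F'(x),F'(y))$; but this argument is $\cartan(f'(x_0),f'(x),f'(y))=\alpha(x_0,x,y)$ by hypothesis. Combined with $|B'(F'(x),F'(y))|=\beta(x,y)$ this yields $B'(F'(x),F'(y))=e^{i\alpha(x_0,x,y)}\beta(x,y)$, matching the Gram matrix of $F$.

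With the two Gram matrices identified, the assignment $F(x)\mapsto F'(x)$ extends $\CC$-linearly to a well-defined $B$-preserving map between the spans: if $\sum_i c_i F(x_i)=0$, then $w=\sum_i c_i F'(x_i)$ satisfies $B'(w,F'(x_j))=0$ for all $j$, and as $f'(X)$ is total and $B'$ is continuous and non-degenerate this forces $w=0$. Since both images are total, this isometry of dense subspaces extends to a surjective $B$-unitary $\CC\oplus H\to\CC\oplus H'$; being complex-linear it descends to a \emph{holomorphic} isometry $\HC^\kappa\to\HC^{\kappa'}$ carrying $f(x)$ to $f'(x)$ for every $x$, and in particular $\kappa=\kappa'$. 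Uniqueness is then immediate: if two holomorphic isometries agree on all of $f(X)$, their composite inverse fixes the total set $f(X)$ pointwise, hence fixes the minimal hyperbolic subspace it spans, namely all of $\HC^\kappa$.

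Finally, the equivariant representation falls out of this uniqueness. For a bijection $\gamma$ of $X$ preserving $\beta$ and $\alpha$, the composite $f\circ\gamma$ is again a total map realising $(\beta,\alpha)$, so the above yields a unique holomorphic isometry $\ro(\gamma)$ with $\ro(\gamma)\circ f=f\circ\gamma$. That $\ro$ is a homomorphism follows by the same uniqueness, since $\ro(\gamma_1)\ro(\gamma_2)$ and $\ro(\gamma_1\gamma_2)$ both send $f(x)$ to $f(\gamma_1\gamma_2 x)$ and hence coincide on the total set $f(X)$. I expect the genuine work to be concentrated in the phase-matching of the Gram matrices in the second paragraph, where the alternating and cocycle properties of $\alpha$ and the triple-product computation~\eqref{eq:triple} must conspire exactly; the extension to a $B$-unitary and the homomorphism property are then formal.
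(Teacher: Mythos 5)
Your proposal is correct in substance, but it takes a genuinely different route from the paper. The paper does not prove uniqueness first: it constructs the representation directly, by equipping the free vector space $\CC[X]$ with the twisted Hermitian form $B_z(\fhi,\psi)=\sum_{x,y\in X} e^{i\alpha(z,x,y)}\,\beta(x,y)\,\ol{\fhi(x)}\psi(y)$ and the twisted translation $(g.\fhi)(x)=e^{i\alpha(z,gz,x)}\,\fhi(g\inv x)$; one checks that this preserves $B_z$ and defines a \emph{projective} representation with multiplier $e^{i\alpha(z,gz,ghz)}$, which becomes an honest action after projectivising, and the completed quotient of $\CC[X]$ is then identified with the Minkowski space of Proposition~\ref{prop:KCHT} (the remaining details, including uniqueness, being quoted from the real case, Theorem~3.4 of~\cite{Monod-Py2_arx}). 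You instead prove the uniqueness statement first --- via rigidity of the normalised Gram matrix --- and then obtain the representation and its homomorphism property formally. Your phase-matching step is indeed the crux and it is correct: with $B'(F'(x),F'(x))=1$ and $B'(F'(x_0),F'(x))\in\RR_{>0}$ (possible since this quantity has modulus $\beta(x_0,x)\geq 1$), the triple product~\eqref{eq:triple} pins down $B'(F'(x),F'(y))=e^{i\alpha(x_0,x,y)}\beta(x,y)$, matching~\eqref{eq:B:beta}. What your route buys is a self-contained uniqueness proof with equivariance falling out for free; what the paper's route buys is the explicit cocycle formula for the action, exhibiting the multiplier $e^{i\alpha}$ --- exactly the cohomological phenomenon emphasised in the introduction.

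Two steps that you label ``formal'' deserve more care. First, extending $F(x)\mapsto F'(x)$ from the dense spans to a $B$-unitary is not the usual Hilbert-space extension argument: $B$ is indefinite, and preserving an indefinite form does not by itself give boundedness. It does work for forms of index one: your map sends the positive unit vector $F(x_0)=1\oplus 0$ to the positive unit vector $F'(x_0)$, hence maps $\mathrm{span}\,F(X)\cap (F(x_0))^{\perp_B}$, which is dense in $0\oplus H$, into $(F'(x_0))^{\perp_{B'}}$; on these complements $-B$ and $-B'$ are genuine Hilbert inner products, so the extension reduces to the Hilbert-space case, and surjectivity follows because the image of the extension is complete and contains the dense span of $F'(X)$. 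Second, in the uniqueness step, the fixed-point set of a holomorphic isometry is a union of projectivised eigenspaces and need not be a single hyperbolic subspace, so ``fixes the subspace spanned'' is better replaced by a linear argument: if a unitary $U$ preserves every line $f(x)$, then $UF(x)=\lambda_x F(x)$ with $|\lambda_x|=1$, and since $B(F(x),F(y))=e^{i\alpha(x_0,x,y)}\beta(x,y)$ never vanishes, all $\lambda_x$ coincide; thus $U$ is scalar on a dense subspace and projectively trivial.
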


This results immediately implies Theorem~\ref{thm:C:rep} from the introduction by setting $X=G$. The point $p$ of that statement is then $f(e)$ and the continuity assertion is a standard consequence of the uniform equicontinuity of isometries. The proof below yields a representation to $\Isom(\HC^\kappa)$ if we consider more generally bijections for which $\alpha$ is only twisted-invariant.

\begin{proof}[Proof of Theorem~\ref{thm:KCHT-char}]
Denote by $G$ the group of bijections of $X$ preserving $\beta$ and $\alpha$. We choose an arbitrary point $z\in X$ and define a Hermitian form $B_z$ on the free complex vector space $\CC[X]$ on $X$ by setting
\begin{equation*}
B_z(\fhi, \psi) = \sum_{x,y\in X} e^{i \alpha(z,x,y)} \, \beta(x,y) \, \ol{\fhi(x)} \psi(y)\kern10mm(\fhi, \psi\in \CC[X]).
\end{equation*}
We caution that a priori the set of $B_z$-positive vectors depends on the choice of $z$, although one can check, using the cocycle relation for $\alpha$, that $\fhi$ is $B_z$-positive if and only if the vector
\begin{equation*}
x\longmapsto e^{i \alpha(z',z,x)}\, \fhi(x)
\end{equation*}
is $B_{z'}$-positive. This suggests to define a representation of $G$ on $\CC[X]$ as follows:
\begin{equation*}
(g. \fhi)(x) = e^{i \alpha(z, g z, x)} \,\fhi( g\inv x).
\end{equation*}
Indeed, a direct computation using the cocycle identity for $\alpha$ and the $G$-invariance of $(\beta, \alpha)$ shows $B_z(g. \fhi, g. \psi) = B_z(\fhi, \psi)$. Now $\fhi \mapsto g.\fhi$ is a linear map, but does not quite define a linear representation of $G$. In fact, a computation with the cocycle relation for $\alpha$ shows that we have the following identity for $g,h\in G$\,:
\begin{equation*}
\big(g.(h.\fhi)\big)(x) = e^{i \alpha(z, g z, g h z)} \, \big((gh).\fhi\big)(x).
\end{equation*}
In other words, we have a well-defined projective representation of $G$ preserving $B_z$; notice that the associated multiplier is precisely given by $e^{i \alpha}$.

At this point, the proof is completed exactly as in the real case, see Theorem~3.4 in~\cite{Monod-Py2_arx}. We refer the reader to that proof, but still sketch it briefly. Since $G$ preserves $B_z$, the projective representation descends to the quotient of $\CC[X]$ by the kernel of this Hermitian form. Moreover, the definition of $B_z$ shows that there is an isomorphism between this quotient and a dense subspace of the Minkowski space constructed for Proposition~\ref{prop:KCHT} which is compatible with the maps from $X$ to each of these two spaces. It follows that the completion of the quotient of $\CC[X]$ can be identified with that Minkowski space and all statements follow.
\end{proof}

\section{Powers of kernels and their representations}
\subsection{Powers of kernels}
Let $(\beta, \alpha)$ be a kernel of complex hyperbolic type. We  now proceed to establish Theorem~\ref{thm:C:power}, namely: $(\beta^t, t \alpha)$ is still a kernel of complex hyperbolic type whenever $0\leq t \leq 1$.

In the proof below, we shall take complex $t$-powers; therefore we specify that we define them by principal value. In fact, the proof will only involve $t$-powers of complex numbers with positive real parts due to the fact that $\alpha$ ranges in $(-\pi/2, \pi/2)$.
\begin{flushright}
\begin{minipage}[t]{0.7\linewidth}\itshape\small
\begin{flushright}
in other words, power is a word the meaning of which\\
we do not understand.
\end{flushright}
\begin{flushright}
\upshape\small
Leo Tolstoy, \emph{War and Peace} (tr. by A.\&L. Maude),\\
Chap.~V of the Second Epilogue.
\end{flushright}
\end{minipage}
\end{flushright}
\begin{proof}[Proof of Theorem~\ref{thm:C:power}]
We assume throughout that $0<t<1$ holds, since the cases $t=0,1$ are trivial. Fix $n\in \NN$ and choose points $x_0, x_1, \ldots, x_n$ in $X$. Define the $n$-by-$n$ Hermitian matrix $C$ by
\begin{equation*}
C_{j,k}= e^{i \alpha(x_0, x_j, x_k)}\, \beta (x_j, x_k) \kern10mm(j,k\geq 1)
\end{equation*}
and the vector $b\in\CC^n$ by $b_j=\beta(x_0, x_j)$. Recall that $b_j$ is real and positive (in fact~$\geq 1$). We know that the matrix
\begin{equation}\label{eq:power:hyp}
M= b b\ctran - C
\end{equation}
is positive semi-definite (here $b\ctran$ is just the transpose). Our goal is to prove that the difference of Hadamard powers
\begin{equation}\label{eq:power:conc}
\left( b_j^t \, b_k^t - C_{j,k}^t \right)_{j,k \geq 1}
\end{equation}
is positive semi-definite as well. Since all $b_j$ are positive, we can define a diagonal matrix $D=D\ctran$ by $D_{j,j} = 1/b_j$. Thus $D b = \one$. We define $M'$ by
\begin{equation*}
M' = D M D\ctran = \one \one\ctran  - D C D\ctran,
\end{equation*}
where we used~\eqref{eq:power:hyp}. By construction, $M'$ is positive semi-definite. Moreover, we have
\begin{equation*}
M'_{j,j} = 1 -  b_j^{-1}  \,C_{j,j} \, b_j^{-1} =  1 - b_j^{-2} < 1
\end{equation*}
for all $j\geq 1$. It follows $|M'_{j,k}|<1$ for all $j,k\geq 1$ since $M'$  is positive semi-definite. In particular we can apply to each $M'_{j,k}$ the power series
\begin{equation}\label{eq:q}
q(z) = t\, \sum_{m=1}^\infty \frac{(1-t) (2-t) \cdots (m-1-t)}{m!}\, z^m
\end{equation}
because it has radius of convergence~$1$. We define thus $M''_{j,k} = q(M'_{j,k})$. In fact, $q$ is essentially a generalised binomial series:
\begin{equation*}
q(z) = 1-  \sum_{m=0}^\infty
\begin{pmatrix}
m -1 -t\\
m
\end{pmatrix}
\,z^m
\end{equation*}
and thus basic binomial identities imply
\begin{equation}\label{eq:q:t}
q(z) = 1- (1-z)^t \kern5mm (\forall\,  |z|<1).
\end{equation}
We now use the constraint $0<t<1$. It implies that the expression~\eqref{eq:q} is a limit of \emph{positive} combinations of powers; hence, Schur's product theorem ensures that $M''$ is also positive semi-definite. Thus finally the matrix $N$ defined by
\begin{equation*}
N_{j,k} = b_j^t \, M''_{j,k} \, b_k^t
\end{equation*}
is positive semi-definite. Using the determination~\eqref{eq:q:t} for $q$ and $b_j, b_k>0$, we see that
\begin{equation*}
N_{j,k} = b_j^t \, b_k^t \left(1- \left( 1- M'_{j,k} \right)^t \right) = b_j^t \, b_k^t -  \left( b_j \, b_k - b_j \, M'_{j,k} \,b_k \right)^t
\end{equation*}
and hence
\begin{equation*}
N_{j,k} = b_j^t \, b_k^t -  \left( b_j \, b_k - M_{j,k} \right)^t = b_j^t \, b_k^t  - C_{j,k}^t.
\end{equation*}
In conclusion, we proved indeed that~\eqref{eq:power:conc} is positive semi-definite. 
\end{proof}

The above proof can be interpreted as a \emph{non-equivariant} detour through the ball model.

\subsection{Representations of $\Isom(\HCI)$ associated to powers}
Consider the ``tautological'' kernel $(\beta, \alpha)$ on $\Isom(\HCI)$ given by $\cosh d$ and $\cartan$. According to Theorem~\ref{thm:C:rep}, the power kernel $(\beta^t, t \alpha)$ provided by Theorem~\ref{thm:C:power} gives rise to a continuous self\hyph{}representation $\ro$ of $\Isom(\HCI)$ such that
\begin{align}
\cosh d\big(f(x), f(y)\big) &= \big(\cosh d(x,y)\big)^t\kern5mm\text{and}\label{eq:power:map:d}\\
\cartan\big(f(x),f(y),f(z)\big) &= t \,\cartan(x,y,z)\label{eq:power:map:c}
\end{align}
hold for all $x,y,z\in\HCI$, where $f\colon\HCI\to\HCI$ is given by the $\ro$-orbital map associated to the point $p\in\HCI$ of Theorem~\ref{thm:C:rep}. What remains to be established for Theorem~\ref{thm:rep:exist} is that this representation $\ro$ is irreducible.

Given the relation~\eqref{eq:power:map:d}, it follows from Lem\-ma~\ref{lem:non-elem} (applied to any element $g$ that is hyperbolic in the source group) that the representation $\ro$ is non\hyph{}elementary and hence has a unique irreducible part $\HC^\kappa\se \HC^\infty$. We need to prove $\HC^\kappa= \HC^\infty$, or equivalently $p\in \HC^\kappa$.

\smallskip

Choose a point $p'\in \HC^\kappa$ fixed by some point stabilizer in the source group $\Isom(\HCI)$; for instance, take for $p'$ the image of $p$ under the nearest-point projection to $\HC^\kappa$. The associated orbital map is a $\ro$-equivariant map $f'\colon\HCI\to\HC^\kappa$. We furthermore consider a copy of $\Isom(\HRI)$ in $\Isom(\HCI)$ induced by the inclusion of $\OO(1, \infty)$ in $\UU(1, \infty)$ together with the corresponding $\HRI\se\HCI$. We claim that the pull-back via $f'$ of the Cartan argument on $\HC^\kappa$ vanishes on $\HRI$; more generally, we have the following.

\begin{lem}\label{lem:pull-back:0}
Let $2\leq n\leq \infty$ and let $\gamma$ be any $\Isom(\HC^n)^\circ$-invariant alternating cocycle on $\HC^n$. Then $\gamma$ vanishes on triples of points in $\HR^n$.
\end{lem}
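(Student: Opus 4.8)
The plan is to show that $\gamma$ is necessarily a scalar multiple of the Cartan cocycle, after which the result follows immediately from Lemma~\ref{lem:tot:real}: since $\HR^n\se\HC^n$ is a totally real subspace, that lemma tells us $\cartan$ vanishes on every triple lying in $\HR^n$, so any multiple of it does too. To set this up cleanly I would first reduce to the case $n=2$ with $n$ finite. Indeed, any three points of $\HC^n$ lie in a copy of $\HC^2\se\HC^n$, and the stabiliser of such a copy in $\Isom(\HC^n)^\circ$ surjects onto $\Isom(\HC^2)^\circ$; hence the restriction of $\gamma$ to that $\HC^2$ is an $\Isom(\HC^2)^\circ$-invariant alternating cocycle, and it suffices to prove vanishing on triples of the totally real $\HR^2\se\HC^2$ they span. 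This also bypasses any issue with infinite-dimensional cohomology.

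The core step is then the proportionality $\gamma=c\,\cartan$ on $\HC^2$. Here I would invoke Remark~\ref{rem:cohomologous}. The observation recorded there --- that there is no nonzero alternating $\Isom(\HC^2)^\circ$-invariant function on $\HC^2\times\HC^2$, because any pair of points can be interchanged by a holomorphic isometry --- shows that the map $\gamma\mapsto[\gamma]$ is injective on the space of invariant alternating $2$-cocycles: two such cocycles representing the same class differ by an invariant alternating cocycle that is trivial, hence vanish by the pairs argument. Since $\hc^2\big(\Isom(\HC^2)^\circ,\RR\big)$ is one-dimensional and represented by $\cartan$, the space of invariant alternating cocycles is exactly $\RR\cdot\cartan$. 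Thus $\gamma=c\,\cartan$ for some $c\in\RR$, and Lemma~\ref{lem:tot:real} finishes the argument.

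The point requiring care --- and the main obstacle --- is that this identification passes through \emph{continuous} cohomology, whereas the lemma allows an arbitrary, a priori discontinuous, invariant cocycle. I would address this with a complementary and more robust argument that also clarifies the geometric mechanism at work. Every element of $\Isom(\HR^n)=\PO(1,n)$, including the orientation-reversing ones, extends $\CC$-linearly to a \emph{holomorphic} isometry of $\HC^n$ preserving $\HR^n$; hence $\Isom(\HR^n)\se\Isom(\HC^n)^\circ$, and the restriction $\gamma|_{\HR^n}$ is an alternating cocycle invariant under the \emph{entire} real isometry group, orientation-reversing maps included. For $n\geq 3$ the second continuous cohomology of $\Isom(\HR^n)^\circ$ vanishes, so the same uniqueness-of-representative argument forces $\gamma|_{\HR^n}=0$; for $n=2$ that cohomology is spanned by the signed area (Euler) cocycle, but this representative is negated by the orientation-reversing isometries of $\HR^2$ and so cannot be invariant under the full group unless it is zero. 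Either way $\gamma$ vanishes on $\HR^n$, with the orientation-odd nature of the area class being exactly the feature that the complex-conjugation symmetry of $\HR^n$ exploits.
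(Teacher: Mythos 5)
Your proof is correct and rests on the same two pillars as the paper's own argument: the computation of the relevant second continuous cohomology, and the uniqueness of alternating invariant representatives coming from the fact that pairs of points can be interchanged by isometries (Remark~\ref{rem:cohomologous}). The arrangement, however, differs. The paper keeps $n$ finite but arbitrary, restricts $\gamma$ to $\HR^n$, observes that the restriction map $\hc^2(\Isom(\HC^n)^\circ,\RR)\to\hc^2(\Isom(\HR^n)^\circ,\RR)$ vanishes (because the source is generated by the Cartan class, which dies on real triples), and then upgrades the vanishing of the \emph{class} to the vanishing of the \emph{cocycle} by the uniqueness argument applied on $\HR^n$, using metric double transitivity of real hyperbolic space. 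You instead apply the uniqueness argument on $\HC^2$ to pin down $\gamma=c\,\cartan$ globally and then quote Lemma~\ref{lem:tot:real}; the two routes consume the same fact (the Cartan argument vanishes on totally real triples) at different points of the argument. Your handling of $n=2$ in the second argument is genuinely different and rather pleasant: since the full group $\Isom(\HR^2)=\PO(1,2)$, orientation-reversing elements included, sits inside $\Isom(\HC^2)^\circ$, the restriction $\gamma|_{\HR^2}$ is invariant under orientation-reversing maps, which negate the Euler/area representative; this kills the one case where $\hc^2(\Isom(\HR^n)^\circ,\RR)\neq 0$ by symmetry, whereas the paper kills it through the vanishing of the restriction map itself.

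One caveat: you present your second argument as resolving the difficulty that $\gamma$ is not assumed continuous, but it does not. It invokes exactly the same identification of a priori arbitrary invariant alternating cocycles with continuous-cohomology classes as your first argument and as the paper's proof; nothing in the restriction-to-$\HR^n$ step avoids this. That identification is precisely what Remark~\ref{rem:cohomologous} takes for granted, and the paper operates at the same level of informality, so your proof is on equal footing with the paper's --- but the claim that the second argument bypasses the continuity issue should be dropped (or actually justified, e.g.\ by an argument that the complex of invariant, not necessarily continuous, functions on triples still only supports the classes named).
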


\begin{proof}
We can assume $n$ finite upon possibly factoring through $\Isom(\HC^{n'})^\circ$ for some $n'$ finite with $2\leq n'\leq n$. Now recall that the restriction map in continuous cohomology
\begin{equation*}
\hc^2(\Isom(\HC^n)^\circ, \RR) \lra \hc^2(\Isom(\HR^n)^\circ, \RR)
\end{equation*}
vanishes. This follows from the fact that the left hand side is generated by the Cartan argument of $\HC^n$, which vanishes on $\Isom(\HR^n)^\circ$ (and actually the right hand side vanishes altogether unless $n=2$). It remains to upgrade the vanishing of the \emph{class} of $\gamma$ to the actual vanishing of $\gamma$; this follows as in Remark~\ref{rem:cohomologous}, but this time using the metric double transitivity of the real hyperbolic space.
\end{proof}

For later reference, we observe that the proof given here for Lemma~\ref{lem:pull-back:0} also establishes the following simpler variant.

\begin{lem}\label{lem:pull-back:1}
Let $3\leq n\leq \infty$ and let $\gamma$ be any $\Isom(\HR^n)^\circ$-invariant alternating cocycle on $\HR^n$. Then $\gamma$ vanishes on triples of points in $\HR^n$.\qed
\end{lem}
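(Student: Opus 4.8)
The plan is to follow the proof of Lemma~\ref{lem:pull-back:0} almost verbatim, replacing the vanishing of the \emph{restriction} map by the outright vanishing of the target cohomology. First I would reduce to finite $n$: any triple of points of $\HR^n$ lies in a totally geodesic copy of $\HR^2$, which in turn sits inside some totally geodesic $\HR^{n'}$ with $3\leq n'<\infty$ (here the hypothesis $n\geq 3$ is used to leave room for $n'=3$). Since every element of $\Isom(\HR^{n'})^\circ$ extends to $\HR^n$ by acting as the identity on the orthogonal complement, and this extension lies in $\Isom(\HR^n)^\circ$, the restriction of $\gamma$ to $\HR^{n'}$ is an $\Isom(\HR^{n'})^\circ$-invariant alternating cocycle. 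It therefore suffices to treat $\gamma$ on a finite\hyph{}dimensional $\HR^n$ with $3\leq n<\infty$.

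For such $n$, the decisive input is that $\hc^2(\Isom(\HR^n)^\circ,\RR)=0$. By van~Est this group is computed by the $\Isom(\HR^n)^\circ$-invariant $2$-forms on the symmetric space $\HR^n$, equivalently by $(\Lambda^2\RR^n)^{\SO(n)}$, which vanishes for $n\geq 3$ because $\HR^n$ is not Hermitian (only $n=2$ carries an invariant area form). Viewing $\gamma$ as an $\Isom(\HR^n)^\circ$-invariant alternating $2$-cocycle on the homogeneous space $X=\Isom(\HR^n)^\circ/\SO(n)=\HR^n$, it represents a class in this vanishing group; hence $\gamma=d\eta$ for some invariant $1$-cochain $\eta$ on $X^2$, which, after antisymmetrising, I may take to be alternating.

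It then remains to upgrade the vanishing of the cohomology \emph{class} to the vanishing of $\gamma$ itself, exactly as in Remark~\ref{rem:cohomologous}. The point is that there is no nonzero $\Isom(\HR^n)^\circ$-invariant alternating function on $X^2$: given $x\neq y$, the rotation by $\pi$ about the midpoint of the segment joining them, performed inside the geodesic $2$-plane through $x$ and $y$ and extended by the identity, is orientation\hyph{}preserving and interchanges $x$ and $y$, so invariance together with antisymmetry forces $\eta(x,y)=-\eta(x,y)=0$. With $\eta\equiv 0$ one concludes $\gamma=d\eta=0$, as required.

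The only genuine subtlety is this last orientation bookkeeping: I single out the $2$-plane rotation rather than the geodesic point symmetry $\sigma_m$ precisely because $\sigma_m$ is orientation\hyph{}preserving only for even $n$, and the lemma concerns the identity component. Everything else is a transcription of the argument already given for Lemma~\ref{lem:pull-back:0}. Finally, the hypothesis $n\geq 3$ is sharp, since for $n=2$ the oriented\hyph{}area cocycle is a nonzero $\Isom(\HR^2)^\circ$-invariant alternating cocycle, reflecting $\hc^2(\Isom(\HR^2)^\circ,\RR)\cong\RR$.
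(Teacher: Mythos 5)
Your proof is correct and follows essentially the same route as the paper's: the paper disposes of this lemma by noting that the proof of Lemma~\ref{lem:pull-back:0} applies verbatim, i.e.\ reduce to finite $n$, invoke the vanishing of $\hc^2(\Isom(\HR^n)^\circ,\RR)$ for $3\leq n<\infty$, and upgrade the vanishing of the cohomology class to the vanishing of the cocycle itself via metric double transitivity as in Remark~\ref{rem:cohomologous}. Your extra details --- the van~Est/invariant-form computation justifying $\hc^2(\Isom(\HR^n)^\circ,\RR)=0$, the explicit orientation-preserving pair swap by a rotation in a $2$-plane rather than the point symmetry, and the sharpness remark at $n=2$ --- are correct fillings-in of steps the paper leaves implicit.
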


Combining the above pull-back claim with Lemma~\ref{lem:tot:real}, we deduce that $f'(\HRI)$ is contained in a totally real subspace of $\HC^\kappa$, which we denote by $\HR^{\kappa'}$. Upon taking the smallest such subspace, we can assume that $\HR^{\kappa'}$ is invariant under $\ro(\Isom(\HRI))$. This real representation of $\Isom(\HRI)$ is still non\hyph{}elementary and hence it contains a unique $\RR$-irreducible component $\HR^{\kappa''}\se \HR^{\kappa'}$.

We now apply to $\HR^{\kappa''}$ the classification of irreducible self\hyph{}representations of $\Isom(\HRI)$ established in~\cite{Monod-Py2_arx}. The translation lengths under $\ro$ of elements $g\in \Isom(\HRI)$ are unaffected whether we compute them in $\HCI$ or in $\HR^{\kappa''}$; this is apparent in the expression~\eqref{eq:length:F}. Thus we have $\ell(\ro(g)) = t\, \ell(g)$ in view of~\eqref{eq:power:map:d}. Therefore, the classification (Theorem~II in~\cite{Monod-Py2_arx}) shows that we must have
\begin{equation}\label{eq:f''}
\cosh d\big(f''(x), f''(y)\big) = \big(\cosh d(x,y)\big)^t
\end{equation}
for all $x,y$ in $\HRI\se\HCI$, where $f''$ is the unique $\ro(\Isom(\HRI))$-equivariant map $f''\colon\HRI\to\HR^{\kappa''}$. The uniqueness of this map follows from Remark~5.6 in~\cite{Monod-Py2_arx}. We note that $f''$ can be given as the $\ro(\Isom(\HRI))$-orbital map of the point $p''=\pi(p)$, where $\pi\colon \HCI\to\HR^{\kappa''}$ is the nearest-point projection. Equivalently, $f''=\pi\circ f$ holds on $\HRI$.

Since the values in~\eqref{eq:power:map:d} coincide with those in~\eqref{eq:f''}, we conclude that $\pi$ is the identity, because otherwise the sandwich lemma (see~\cite[II.2.12]{Bridson-Haefliger}) would force
\begin{equation*}
d\big(f''(x), f''(y)\big) < d\big(f(x), f(y)\big).
\end{equation*}
In particular, $p=p''$ lies in $\HR^{\kappa''}$ and thus a fortiori in $\HC^\kappa$, as was to be shown. This finishes the proof of Theorem~\ref{thm:rep:exist}.\qed


\subsection{Representations of $\Isom(\HC^n)$ for $n$ finite}
The only, but significant, difference with the proof of Theorem~\ref{thm:rep:exist} is as follows. We still have a representation $\ro_0\colon \Isom(\HC^n)\to \Isom(\HCI)$ and an equivariant map $f_0\colon \HC^n\to\HCI$ satisfying the same conditions~\eqref{eq:power:map:d} and~\eqref{eq:power:map:c} as above; again, $f_0$ is the orbital map of a point $p_0\in\HCI$ provided by Theorem~\ref{thm:C:rep}. \emph{However}, the representation is not irreducible and we will need to replace the map $f_0$ with another map $f$, given by another point $p$.

As already mentioned above, Lemma~\ref{lem:non-elem} implies that the representation is non\hyph{}elementary and hence has a unique irreducible part. Let $\HC^\kappa\se\HCI$ be the corresponding minimal invariant complex hyperbolic subspace. (We have $\kappa=\infty$, e.g.\ by the Karpelevich--Mostow theorem, but we keep the notation $\HC^\kappa$ to distinguish it from the larger ambient space.) We define $\ro$ to be the co-restriction of $\ro_0$ to $\HC^\kappa$ and $f=\pi\circ f_0$, where $\pi\colon\HCI\to \HC^\kappa$ is the nearest-point projection, which is equivariant. Thus $f$ can also be described as the orbital map associated to $p=\pi(p_0)$.

We still have $\ell(\ro(g)) = \ell(\ro_0(g))$ for all $g$ in $\Isom(\HC^n)$ using~\eqref{eq:length:F}; therefore, we can deduce $\ell(\ro(g)) = t\, \ell(g)$ from~\eqref{eq:power:map:d}. It only remains to show that the identity
\begin{equation}\label{eq:coc:equal}
\cartan\big(f(x),f(y),f(z)\big) = t \,\cartan(x,y,z)
\end{equation}
holds. We know already that~\eqref{eq:coc:equal} holds for $f_0$ instead of $f$. It is a general fact in the cohomology of groups that the cocycles $\cartan(f(x),f(y),f(z))$ and $\cartan(f_0(x),f_0(y),f_0(z))$ are cohomologous (as $\Isom(\HC^n)$-cocycles) because the maps $f$ and $f_0$ are conjugated to each other by an element of $\Isom(\HCI)$, namely any element sending $p$ to $p_0$. In the present case, this implies that the actual cocycles coincide as explained in Remark~\ref{rem:cohomologous}.

The proof of Theorem~\ref{thm:rep:exist:n} is complete; we observe that there is nothing special happening to this argument in the special case $n=1$ since we have not used the restriction to $\Isom(\HR^n)$, in contrast to the proof of Theorem~\ref{thm:rep:exist}.\qed

\begin{rem}
A more geometric argument to deduce~\eqref{eq:coc:equal} from the corresponding identity with $f_0$ is as follows. The maps $f$ and $f_0$ remain at constant distance of each other, namely $d(p, p_0)$. Therefore the equality can be deduced from the fact that the cocycle $\cartan$ is determined by its values at infinity, which is explained in~\cite{Burger-IozziPSUn1_pb}.
\end{rem}

\begin{rem}\label{rem:contain}
In the proof of Theorem~\ref{thm:rep:exist}, we noted that our self\hyph{}representations of $\Isom(\HCI)$, when restricted to $\Isom(\HRI)$, contain the real self\hyph{}representations of $\Isom(\HRI)$ constructed in \cite{Monod-Py2_arx}. A corresponding statement holds for the above representations of $\Isom(\HC^n)$; we shall justify it later in Remark~\ref{rem:contain:n} in order to benefit from some general observations.
\end{rem}

\begin{rem}\label{rem:2:series}
We now justify Remark~\ref{rem:2:series:state}. On the one hand, Theorem~\ref{thm:rep:exist:n} applies in particular to $n=1$ and the proof above also applies to produce a family of irreducible representations of the subgroup $\PSL_2(\RR) \cong \Isom(\HC^1)^\circ$ into $\Isom(\HCI)$. On the other hand, we obtain a family of representations of $\PSL_2(\RR) \cong \Isom(\HR^2)^\circ$ into $\Isom(\HCI)$ by complexifying the representations into $\Isom(\HRI)$ studied in~\cite{Delzant-Py},\cite{Monod-Py}. These representations are also irreducible because Proposition~\ref{prop:complex:irred} below and its proof hold for $\Isom(\HR^n)^\circ$ exactly as they hold for $\Isom(\HR^n)$. (In any case we only passed to the connected component out of affection for $\PSL_2$.)

Finally, no element of the first family can be conjugated to any element of the second one because they have non-vanishing, respectively vanishing, pull-back of the K\"ahler class. This follows from Remark~\ref{rem:cohomologous}, respectively from the fact that the Cartan argument vanishes on the totally real subspace preserved by the second family.
\end{rem}

\section{General properties of representations}\label{sec:gen}
This section collects results that will be useful to organise the arguments towards a classification.

\subsection{Non-elementarity}
The first two statements are straightforward generalisations of similar statements in~\cite{Monod-Py2_arx},\cite{Monod-Py}.

\begin{prop}\label{prop:rep:length}
Let $\KK=\RR$ or $\CC$. Let $\ro\colon \Isom(\HK^\kappa) \to \Isom(\HC^{\kappa'})$ be a non\hyph{}elementary continuous representation, with $\kappa, \kappa'\geq 2$ arbitrary cardinals. If $g\in \Isom(\HK^\kappa)$ has positive translation length, then so does $\ro(g)$.
\end{prop}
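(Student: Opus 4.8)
The plan is to argue by contradiction: I assume $\ell(\ro(g)) = 0$ and deduce that $\ro$ is elementary, contrary to hypothesis. Since $g$ has positive translation length it is a hyperbolic isometry of $\HK^\kappa$, hence fixes two distinct points $\xi_+, \xi_-\in\partial\HK^\kappa$ (attracting and repelling). Write $G=\Isom(\HK^\kappa)$ and let $N_+, N_-$ be the horospherical (unipotent) subgroups of $G$ fixing $\xi_+$ and $\xi_-$ respectively. The dynamical input is the standard contraction property of hyperbolic elements: the subgroup $N_-$ at the repelling point is contracted by conjugation under $g$, and $N_+$ by conjugation under $g^{-1}$, so that $g^n v g^{-n}\to e$ for $v\in N_-$ and $g^{-n}v'g^n\to e$ for $v'\in N_+$, the limits taken in the pointwise topology of $G$. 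As $\ro$ is continuous, this gives $\ro(g)^n\ro(v)\ro(g)^{-n}\to\id$ and $\ro(g)^{-n}\ro(v')\ro(g)^n\to\id$ pointwise on $\HC^{\kappa'}$.

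By the standard trichotomy for isometries of $\HC^{\kappa'}$, the assumption $\ell(\ro(g))=0$ forces $\ro(g)$ to be elliptic or parabolic; in either case $\ro(g)$, and therefore also $\ro(g)^{-1}$, fixes a point $q\in\HC^{\kappa'}\cup\partial\HC^{\kappa'}$. I would then show $\ro(v)q=q$ for all $v\in N_-\cup N_+$. If $\ro(g)$ is elliptic, then $q\in\HC^{\kappa'}$ and $\ro(g)^{\mp n}q=q$, so
\[
d\big(\ro(v)q,\,q\big)=\lim_{n\to\infty} d\big(\ro(g)^n\ro(v)\ro(g)^{-n}q,\,q\big)=0,
\]
whence $\ro(v)q=q$; the case $v'\in N_+$ is identical with $g$ replaced by $g^{-1}$. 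If $\ro(g)$ is parabolic, let $q=\omega\in\partial\HC^{\kappa'}$ be its unique fixed point, fix a basepoint $o$, and set $o_n=\ro(g)^{-n}o$, so $o_n\to\omega$. Evaluating the pointwise convergence at $o$ and applying the isometry $\ro(g)^{-n}$ yields $d(\ro(v)o_n, o_n)\to0$; since $o_n\to\omega$ at bounded distance from $\ro(v)o_n$, also $\ro(v)o_n\to\omega$, and continuity of $\ro(v)$ on the bordification gives $\ro(v)\omega=\omega$. The statement for $N_+$ follows symmetrically using $o_n'=\ro(g)^n o\to\omega$.

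At this point $\ro\big(\langle N_+,N_-\rangle\big)$ fixes $q$. I would then invoke the structural fact that in rank one the two opposite horospherical subgroups generate the connected isometry group, $\langle N_+,N_-\rangle=\Isom(\HK^\kappa)^\circ$. Since $\ro(g)$ also fixes $q$ and $\Isom(\HK^\kappa)^\circ$ has finite index in $G$, the orbit $\ro(G)q$ is finite; hence $\ro(G)$ preserves a finite subset of $\HC^{\kappa'}\cup\partial\HC^{\kappa'}$, i.e.\ $\ro$ is elementary in the sense of Section~\ref{sec:non-el}, the desired contradiction.

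The routine ingredients (the trichotomy of isometries and the Gromov-product estimate ``bounded distance to a sequence converging to $\omega$ forces the same limit'') are standard in \cat{-1} geometry and can be cited from~\cite{Bridson-Haefliger}. The two points where ``straightforward generalisation'' must actually be cashed out, and hence where I expect the only real work, are: (i) the precise bookkeeping of the parabolic boundary case above; and (ii) verifying, in the possibly infinite-dimensional setting, both the contraction property of $N_\pm$ and the generation statement $\langle N_+,N_-\rangle=\Isom(\HK^\kappa)^\circ$ for $\KK=\RR,\CC$ and arbitrary $\kappa\geq 2$.
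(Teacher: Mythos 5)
Your overall strategy (contradiction, contraction under conjugation by $g$, then a generation statement to spread the fixed point to the whole group) is the same family of argument as the paper's, and your elliptic case is correct as written. However, the two items you defer as ``the only real work'' are not routine verifications in the infinite\hyph{}dimensional setting that the proposition actually covers: one of your asserted steps is false in general, and the other is false as stated.

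First, in the parabolic case the assertion $o_n=\ro(g)^{-n}o\to\omega$ can fail. A parabolic isometry of $\HC^{\kappa'}$ or $\HR^{\kappa'}$ with $\kappa'$ infinite need not have orbits converging to its fixed point: in horospherical coordinates at $\omega$, any affine isometry $v\mapsto Av+b$ of the Hilbert space with unbounded orbits and no fixed point induces a parabolic isometry, and one can choose $A$ (a sum of plane rotations with angles tending to $0$) and $b$ so that the orbit of $0$ is unbounded yet returns infinitely often to a bounded set; along that subsequence $o_n$ converges to no boundary point at all. The repair is to use, not convergence of the whole orbit, but the fact that a parabolic isometry has vanishing Busemann character at $\omega$, so that $\{o_n\}$ lies on a single horosphere centred at $\omega$. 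The orbit is unbounded (else $\ro(g)$ would be elliptic by the Cartan fixed\hyph{}point theorem), and an unbounded sequence on a fixed horosphere converges to $\omega$ along any subsequence leaving every bounded set, since there the Gromov product $(o_n|\omega)$ is $\tfrac12 d(o_n,o)$ up to a bounded error. Combined with $d(\ro(v)o_n,o_n)\to 0$, this subsequence suffices to get $\ro(v)\omega=\omega$; this is in substance the ``contraction group argument'' of Proposition~2.1 in~\cite{Monod-Py} that the paper invokes.

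Second, the generation statement $\langle N_+,N_-\rangle=\Isom(\HK^\kappa)^\circ$ is false when $\kappa$ is infinite. In the Minkowski model every horospherical element is the identity plus an operator of rank at most two, so $\langle N_+,N_-\rangle$ consists of finite\hyph{}rank perturbations of the identity; this is a proper subgroup (it misses, for instance, every elliptic element whose rotation part differs from the identity by an operator of infinite rank). What is true, and is enough for your argument, is that this subgroup is \emph{dense}: it contains a copy of $\Isom(\HK^n)^\circ$ for each finite $n$ (opposite horospherical subgroups do generate in finite dimensions), the union of these is dense in $\Isom(\HK^\kappa)$, and stabilisers of points of $\HC^{\kappa'}\cup\partial\HC^{\kappa'}$ are closed in the pointwise topology, so the fixed point propagates to the closure. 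The paper avoids this issue altogether by working with the full stabilisers $P,\ol P$ of the two boundary fixed points of $g$: these are maximal subgroups because $\Isom(\HK^\kappa)$ acts doubly transitively on $\partial\HK^\kappa$, hence they generate the whole group on the nose; the price is that the contraction argument must then handle elements whose conjugates $g^{-n}pg^n$ converge only to a nontrivial Levi part rather than to the identity --- but bounded displacement along the orbit is all the horosphere argument above requires. (The paper also disposes of the elliptic case differently, via Lemma~\ref{lem:fixed:pt} and a Cartan\hyph{}like decomposition; your direct treatment of that case is a fine alternative.)
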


\begin{proof}
Suppose for a contradiction that $\ro(g)$ has vanishing translation length. It cannot fix a point in $\HC^{\kappa'}$, for otherwise Lemma~\ref{lem:fixed:pt} together with a Cartan-like decomposition for $\Isom(\HK^\kappa)$ would imply that $\Isom(\HK^\kappa)$ has bounded orbits, contradicting non\hyph{}elementarity (compare Proposition~5.3 in~\cite{Monod-Py2_arx}). Thus $\ro(g)$ is parabolic and hence it has a \emph{unique} fixed point $\xi$ in $\partial\HC^{\kappa'}$. Let $P<\Isom(\HK^\kappa)$ be the stabiliser of the attracting fixed point of $g$ in $\partial\HK^\kappa$ and $\ol P$ the stabiliser of its repelling fixed point. A contraction group argument shows that $\ro(P)$ must fix $\xi$, and likewise for $\ro(\ol P)$; compare Proposition~2.1 in~\cite{Monod-Py}. These groups are however distinct maximal subgroups of $\Isom(\HK^\kappa)$ because the latter is doubly transitive on $\partial\HK^\kappa$. We conclude that $\Isom(\HK^\kappa)$ fixes $\xi$, a contradiction.
\end{proof}

\begin{cor}\label{cor:rest:n}
Let $\KK=\RR$ or $\CC$. Let $\ro\colon \Isom(\HK^\kappa) \to \Isom(\HC^{\kappa'})$ be a non\hyph{}elementary continuous representation, with $\kappa, \kappa'\geq 2$ arbitrary cardinals. Then the restriction of $\ro$ to any copy of $\Isom(\HC^{\kappa''})^\circ$ or of $\Isom(\HR^{\kappa''})^\circ$ in $\Isom(\HK^\kappa)$ with $\kappa''\geq 2$ remains non\hyph{}elementary.
\end{cor}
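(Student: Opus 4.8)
The plan is to manufacture, inside the given copy, a single isometry whose $\ro$-image has positive translation length, and then to feed this into the perfection criterion of Lemma~\ref{lem:non-elem}. Write $H$ for the copy of $\Isom(\HC^{\kappa''})^\circ$ or of $\Isom(\HR^{\kappa''})^\circ$ sitting in $G=\Isom(\HK^\kappa)$; it arises from a totally geodesic subspace $Y\se\HK^\kappa$ standardly identified with $\HC^{\kappa''}$ or $\HR^{\kappa''}$. First I would fix a hyperbolic element $h\in H$ whose axis lies in a complex or real geodesic inside $Y$, chosen so that $h$ belongs to a subgroup isomorphic to $\PSL_2(\RR)$ (equivalently $\SU(1,1)\cong\SL_2(\RR)$ in the complex-geodesic case); such a subgroup is perfect. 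Because $Y$ is totally geodesic, the axis of $h$ in $Y$ is also a geodesic of $\HK^\kappa$, so the translation length of $h$ viewed in $G$ agrees with its translation length in $H$; in particular it is positive.

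Next, since $\ro$ is non-elementary by hypothesis and $h\in G$ has positive translation length, Proposition~\ref{prop:rep:length} applies and shows that $\ro(h)$ has positive translation length in $\Isom(\HC^{\kappa'})$. Now I restrict $\ro$ to $H$: choosing a base point yields a continuous function of complex hyperbolic type $(F,\alpha)$ on $H$ in the sense of Theorem~\ref{thm:C:rep}, and the formula~\eqref{eq:length:F} gives
\begin{equation*}
\lim_{n\to\infty} F(h^n)^{\frac1n} = e^{\ell(\ro(h))} > 1.
\end{equation*}
Since $h$ lies in the perfect subgroup isomorphic to $\PSL_2(\RR)$, the final clause of Lemma~\ref{lem:non-elem} (in its purely algebraic form, requiring no continuity) applies to $G=H$ and shows that the restricted action $\ro|_H$ is non-elementary, which is exactly the assertion.

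The only step demanding any care is the first one: one must exhibit in $H$ an element that is genuinely hyperbolic in the ambient space $\HK^\kappa$ and that simultaneously lies in an abstractly perfect subgroup. Using a $\PSL_2$-triangle inside a totally geodesic $\HC^1$ or $\HR^2\se Y$ settles both requirements at once. This has the added benefit of bypassing any discussion of the (topological) perfection of $H$ itself, which would otherwise be a delicate point when $\kappa''=\infty$; for the case $\KK=\CC$, $\kappa''$ finite one could alternatively just invoke simplicity of $\PU(1,\kappa'')$ directly, but the uniform $\PSL_2$ argument covers all cases at once.
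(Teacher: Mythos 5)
Your proposal is correct and follows essentially the same route as the paper: both arguments locate a hyperbolic element inside a $\PSL_2(\RR)$ subgroup of the given copy, invoke Proposition~\ref{prop:rep:length} to see that its $\ro$-image has positive translation length, and then apply Lemma~\ref{lem:non-elem} via the perfection of $\PSL_2(\RR)$. The paper is merely terser (it restricts outright to the $\PSL_2(\RR)$ copy rather than invoking the final clause of Lemma~\ref{lem:non-elem} for a perfect subgroup, and it leaves implicit the totally geodesic argument you spell out for why the translation length is positive in the ambient space).
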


\begin{proof}
Upon restricting further, we just show that the restriction of $\ro$ to a copy of $\PSL_2(\RR)$ is non-elementary. In view of Lemma~\ref{lem:non-elem}, it suffices to find $g$ in $\PSL_2(\RR)$ such that $\ro(g)$ has positive translation length. By Proposition~\ref{prop:rep:length}, any hyperbolic element will do.
\end{proof}

\begin{rem}\label{rem:contain:n}
We now justify that the representations of $\Isom(\HC^n)$ constructed for Theorem~\ref{thm:rep:exist:n}, when restricted to $\Isom(\HR^n)$ with $n\geq 2$, contain the real representations of $\Isom(\HR^n)$ on $\HRI$ studied in~\cite{Monod-Py}. As in the above proof of Theorem~\ref{thm:rep:exist}, we combine Lemma~\ref{lem:pull-back:0} with Lemma~\ref{lem:tot:real} to deduce that the restriction to $\Isom(\HR^n)$ preserves some real hyperbolic subspace $\HR^{\kappa'}$ in $\HCI$. By Corollary~\ref{cor:rest:n}, this representation on $\HR^{\kappa'}$ remains non\hyph{}elementary and hence contains an irreducible (over $\RR$) part $\HR^{\kappa''}$. We can now apply the classification established in~\cite[Thm.~B]{Monod-Py} to $\HR^{\kappa''}$ and it remains only to justify that the parameter $t$ of that classification coincides with the parameter of the power construction used in Theorem~\ref{thm:rep:exist:n}; this follows from the characterisation of $t$ in terms of translation lengths.
\end{rem}

\subsection{Fixed points of point fixators}\label{sec:fixator}
The following proposition records an extended version of an argument used above in the proof of Theorem~\ref{thm:rep:exist}.

\begin{prop}\label{prop:O:unique}
Let $\ro$ be an irreducible continuous (real) linear representation of $\Isom(\HR^n)$ to $\OO(1, \infty)$, where $2\leq n \leq \infty$. Then $\ro(\OO(n))$ fixes a unique point in $\HRI$ and preserves a unique line in the underlying vector space.
\end{prop}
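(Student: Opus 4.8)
The plan is to prove existence and uniqueness separately, both by exploiting the special structure of the compact subgroup $\OO(n)$ acting on the target $\HRI$. Throughout, $\ro$ is irreducible, hence in particular non-elementary and minimal, so every $\ro(\Isom(\HR^n))$-orbit is total in $\HRI$.

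First I would establish \emph{existence of a fixed point}. The subgroup $\OO(n)\le\Isom(\HR^n)$ is precisely a point-stabiliser (the stabiliser of the base-point of $\HR^n$). So by Lemma~\ref{lem:fixed:pt}, applied to the restriction of $\ro$ to $\OO(n)$ acting isometrically on the complete \cat0 space $\HRI$, the group $\ro(\OO(n))$ fixes a point $p\in\HRI$. This gives existence directly; no further work is needed for the point-fixing assertion beyond invoking the lemma.

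Next I would upgrade a fixed \emph{point} to a fixed \emph{line} in the underlying vector space $\RR\oplus H$ of the hyperboloid model $\HRI=\HR^\infty$. A point $p\in\HRI$ is a positive line, represented by a positive vector $P\in\RR\oplus H$; since $\ro(\OO(n))$ fixes the point $p$ projectively and consists of linear isometries of the form $\OO(1,\infty)$, it preserves the line $\RR P$. Thus a fixed point yields a preserved line. (One checks $\OO(1,\infty)$ acts on $\HRI$ as the projectivisation, so stabilising $p$ means stabilising $\RR P$ up to sign, but the positivity and the connected or compact structure of $\ro(\OO(n))$ forces the scalar to be $+1$, giving an honestly fixed line.)

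The main obstacle is \emph{uniqueness}, for both the point and the line. Suppose $\ro(\OO(n))$ fixed two distinct points $p\neq q$ in $\HRI$. Then it would fix the entire geodesic segment $[p,q]$ pointwise, and hence fix the two distinct boundary points determined by the complete geodesic through $p$ and $q$; equivalently, it would preserve a $2$-dimensional non-degenerate subspace of $\RR\oplus H$ on which it acts trivially, giving at least a $2$-dimensional space of $\ro(\OO(n))$-fixed vectors. The plan is to derive a contradiction with irreducibility. Here the key input is the homogeneous-space structure: $\OO(n)$ is a \emph{maximal} compact subgroup and $\Isom(\HR^n)/\OO(n)\cong\HR^n$, so the space of $\OO(n)$-fixed vectors is exactly the ``spherical'' line for the pair $(\Isom(\HR^n),\OO(n))$. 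The crucial representation-theoretic fact, which I would argue by transitivity reduction (as in the discussion following Remark~\ref{rem:cohomologous} and in the logical conclusion on ultra-homogeneity), is that for an irreducible $\ro$ the multiplicity of the trivial $\OO(n)$-representation is at most one; in a spherical pair the fixed-vector space is one-dimensional. Concretely, if $V$ denotes the span of the $\ro(\OO(n))$-fixed vectors and $\dim V\ge 2$, then $V$ contains both positive and negative vectors or a degenerate direction, and in either case the $\Isom(\HR^n)$-translates of $V$ would generate a proper invariant subspace, contradicting irreducibility; I would make this precise by showing the orbit $\ro(\Isom(\HR^n))\,p$ could not then be total, since two fixed points force the minimal invariant subspace to drop to a proper complex geodesic or lower. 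This single-fixed-line statement then yields uniqueness of the point as well, since distinct fixed points would produce distinct fixed lines.
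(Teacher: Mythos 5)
Your existence step coincides with the paper's: Lemma~\ref{lem:fixed:pt} produces the $\ro(\OO(n))$-fixed point, and a fixed point in $\HRI$ is a preserved positive line. The genuine gap is in uniqueness, which is the entire content of the proposition. Your argument hinges on the claim that, since $(\Isom(\HR^n),\OO(n))$ is a spherical pair, an irreducible $\ro$ has at most a one-dimensional space of $\ro(\OO(n))$-fixed vectors. But the multiplicity-one theorem for Gelfand pairs is a theorem about \emph{unitary} representations; here $\ro$ preserves the indefinite form of signature $(1,\infty)$ and is not unitary, so neither Schur's lemma nor the commutativity of the bi-$\OO(n)$-invariant convolution algebra can be invoked in the usual way. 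Moreover, for $n=\infty$ the subgroup $\OO(\infty)$ is not compact, so the pair is not a Gelfand pair in the classical locally compact sense at all. The paper is explicitly aware of this obstruction: its complex analogue, Proposition~\ref{prop:unique:FP}, is proved by harmonic analysis through Proposition~\ref{prop:unique:FP:n} (quoting Proposition~5.4 of \cite{Monod-Py}), together with a limiting argument over an exhausting chain of finite-dimensional subgroups to handle the non-compact stabiliser, and the paper stresses that this proof is ``completely different'' and avoids classification. For the real statement at hand, the paper does not reprove anything: uniqueness is quoted from the classification of irreducible representations, namely Lemma~3.9 in \cite{Monod-Py} for $n$ finite and Remark~5.6 in \cite{Monod-Py2_arx} for $n=\infty$. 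The substitutes you offer do not fill the hole: Remark~\ref{rem:cohomologous} gives uniqueness of invariant alternating \emph{cocycles}, not a multiplicity bound on fixed vectors; and your concrete fallback --- that if $\dim V\geq 2$ then the $\Isom(\HR^n)$-translates of $V$ span a \emph{proper} invariant subspace --- is backwards, since irreducibility says precisely that the translates of any nonzero vector are total, so no contradiction can be extracted this way. (There is also the minor inaccuracy that fixing a geodesic of $\HRI$ pointwise only yields a plane on which $\ro(\OO(n))$ acts by $\pm\mathrm{id}$, not necessarily a plane of fixed vectors.)

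There is a second, smaller gap in the line statement. From uniqueness of the fixed point you infer uniqueness of the preserved line because ``distinct fixed points would produce distinct fixed lines''; this only excludes a second preserved \emph{positive} line. A preserved negative or isotropic line determines no point of $\HRI$ at all, so it is untouched by point-uniqueness. This is exactly the role of the paper's closing remark --- ``the set of positive lines is open'': a second invariant line spans, together with the invariant positive line, a plane in which the positive lines form a nonempty open set, and this converts a superfluous invariant line into superfluous fixed points in $\HRI$ (up to routine bookkeeping with sign characters), contradicting the point-uniqueness already established. Some such argument handling non-positive lines is required and is absent from your proposal.
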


Note that the uniqueness of the fixed point is equivalent to the uniqueness of a $\ro(\Isom(\HR^n))$-equivariant map $\HR^n\to\HRI$, that is, to the uniqueness of an orbit isomorphic to $\HR^n$ as a homogeneous space. More generally, for non\hyph{}elementary representations, the \emph{canonical orbit} is defined to be the unique such orbit in the unique irreducible part. The corresponding comments hold for representations on $\HCI$ in view of Proposition~\ref{prop:unique:FP} below.

\begin{rem}\label{rem:canonical}
This terminology allows us to reformulate a statement obtained above in the proof of Theorem~\ref{thm:rep:exist}, namely the statement $p=p''$. This amounts precisely to the fact that the canonical orbit of $\Isom(\HCI)$ contains the canonical orbit of $\Isom(\HRI)$.
\end{rem}

\begin{proof}[Proof of Proposition~\ref{prop:O:unique}]
There is a fixed point by Lemma~\ref{lem:fixed:pt}. Its uniqueness follows from the classification of irreducible representations given in~\cite{Monod-Py},\cite{Monod-Py2_arx}. Specifically, when $n$ is finite, this is shown in Lemma~3.9 in~\cite{Monod-Py}. In the case $n=\infty$, see Remark~5.6 in~\cite{Monod-Py2_arx}. The statement about lines follows because the set of positive lines is open.
\end{proof}

We capture en passant the following approximative converse.

\begin{prop}\label{cor:O:unique:con}
Let $\ro$ be a non-elementary continuous (real) linear representation of $\Isom(\HR^n)$ to $\OO(1, \infty)$, where $2\leq n \leq \infty$. Suppose that $p\in \HRI$ is a point with total orbit under $\ro$.

If $p$ is the unique point fixed by $\ro(\OO(n))$, then $\ro$ is irreducible.
\end{prop}

\begin{proof}[Proof of Proposition~\ref{cor:O:unique:con}]
It suffices to prove that $\ro$ is minimal, i.e.\ that any invariant real hyperbolic subspace $\HR'\se\HRI$ contains $p$. This follows from the uniqueness assumption since $\HR'$ must contain a $\ro(\OO(n))$-fixed point by Lemma~\ref{lem:fixed:pt}.
\end{proof}

There is also a version of Proposition~\ref{prop:O:unique} for complex hyperbolic representations, though the proof is completely different and relies on abstract harmonic analysis. In particular it will be crucial for us that the proof does not rely on a classification.

Since this version can be stated for representations of $\Isom(\HKI)$ for $\KK$ either $\RR$ or $\CC$, we write simply $\Stab_\KK$ for the stabiliser in $\Isom(\HKI)$ of a point in $\HKI$, referring to Section~\ref{sec:isom}.

\begin{prop}\label{prop:unique:FP}
Let $\KK=\RR$ or $\CC$. Let $\ro\colon\Isom(\HKI)\to \Isom(\HCI)$ be an irreducible continuous representation. Then $\ro(\Stab_\KK)$ has a unique fixed point in $\HCI$.
\end{prop}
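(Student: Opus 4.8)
The plan is to settle existence in one line and then reduce the real content, uniqueness, to the vanishing of fixed vectors in a genuine unitary representation, which is then addressed by abstract harmonic analysis rather than by any classification.

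First, existence is immediate: by Lemma~\ref{lem:fixed:pt} the group $\ro(\Stab_\KK)$ fixes at least one point $p\in\HCI$. For uniqueness, I would observe that the fixed-point set $\mathrm{Fix}\big(\ro(\Stab_\KK)\big)$ is a closed convex, hence totally geodesic, subset of the \cat{-1} space $\HCI$. If it is not reduced to $\{p\}$, it contains a nontrivial geodesic through $p$, and the initial unit tangent vector of that geodesic is fixed by the isotropy action of $\ro(\Stab_\KK)$ on $T_p\HCI$. Since $\ro(\Stab_\KK)$ lies in the point-stabiliser $\tUU$ of $p$ (Section~\ref{sec:isom}), this isotropy action is a genuine continuous unitary representation $\sigma\colon\Stab_\KK\to\tUU$ on the complex Hilbert space $T_p\HCI$. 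Therefore it suffices to prove that $\sigma$ has no nonzero fixed vector.

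I would then transcribe this into the Minkowski model $\CC\oplus H$ of $\HCI$. The point $p$ is a $\ro(\Stab_\KK)$-invariant positive line $L$, and since $\Stab_\KK$ is topologically perfect it carries no nontrivial continuous character, so $\Stab_\KK$ acts trivially on $L$; as a representation of $\Stab_\KK$ the ambient space splits orthogonally as $\CC\oplus H=L\oplus\sigma$, with $\sigma$ acting on $L^\perp\cong T_p\HCI$. Consequently the space of $\Stab_\KK$-fixed vectors equals $L\oplus\sigma^{\Stab_\KK}$, and the claim becomes a statement of multiplicity one: the $\Stab_\KK$-fixed subspace of the irreducible $G$-module $\CC\oplus H$ is exactly the line $L$. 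The harmonic-analytic mechanism behind such a statement is that $\big(\Isom(\HKI),\Stab_\KK\big)$ is a Gelfand pair, which here holds by an abstract argument independent of any classification: as $\HKI$ is of rank one, $\Isom(\HKI)$ admits a Cartan decomposition $\Stab_\KK\, A\,\Stab_\KK$ with $A\cong\RR$ a one-parameter transvection group, and the inversion $a\mapsto a^{-1}$ on $A$ is realized by an element of $\Stab_\KK$; hence $\Stab_\KK\, g\,\Stab_\KK=\Stab_\KK\, g^{-1}\,\Stab_\KK$ for all $g$, and the Gelfand trick governs the $\Stab_\KK$-fixed vectors.

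The main obstacle is that the $G$-representation at hand is not unitary: it preserves the \emph{indefinite} form $B$, so the usual Gelfand argument, which relies on $\ro(g)^{*}=\ro(g)^{-1}$, does not apply verbatim. I would circumvent this through the reduction already made, where the object to be controlled is the honestly unitary isotropy representation $\sigma$. Concretely, a nonzero $\sigma$-fixed vector produces a second $\ro(\Stab_\KK)$-fixed geodesic in $\HCI$, hence a $\ro(\Stab_\KK)$-fixed pair $\{\xi_+,\xi_-\}\subseteq\partial\HCI$; one then shows that this configuration is incompatible with the irreducibility of $\ro$. For the latter I would run a contraction-group argument along the axes of the hyperbolic isometries $\ro(a)$, $a\in A$, in the spirit of the proof of Proposition~\ref{prop:rep:length}, to force $\ro\big(\Isom(\HKI)\big)$ to stabilise the span of $\{\xi_+,\xi_-\}$ and thus a proper invariant subspace. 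Verifying that this step truly closes — that is, that the commutativity coming from $\Stab_\KK\, g\,\Stab_\KK=\Stab_\KK\, g^{-1}\,\Stab_\KK$ confines the fixed vectors to the one-dimensional positive part despite the indefinite signature — is the delicate point, and it is precisely here that the irreducibility hypothesis on $\ro$ (rather than mere non-elementarity) is indispensable.
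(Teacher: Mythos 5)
Your existence step and your reduction of uniqueness to the vanishing of fixed vectors in the isotropy representation at $p$ are both sound, but the core of the proposal --- actually proving that vanishing --- is missing, and neither of the two mechanisms you offer closes it. First, the pair $\big(\Isom(\HKI), \Stab_\KK\big)$ is not a Gelfand pair in any sense you can use off the shelf: $\Isom(\HKI)$ is not locally compact and $\Stab_\KK$ is not compact, so there is no Haar measure and no convolution algebra of bi-$\Stab_\KK$-invariant functions; the identity $\Stab_\KK\, g\, \Stab_\KK = \Stab_\KK\, g^{-1} \Stab_\KK$ therefore has no ``Gelfand trick'' attached to it in this setting. (And even formally, multiplicity-one statements concern unitary $G$-modules, whereas the $G$-module $\CC \oplus H$ only preserves the indefinite form $B$ --- an obstacle you flag yourself.) Second, your fallback --- a nonzero $\sigma$-fixed vector gives a $\ro(\Stab_\KK)$-fixed pair $\{\xi_+,\xi_-\} \se \partial \HCI$, contradicting irreducibility via a contraction argument --- is exactly the step you admit you cannot verify, and it is not clear it can be made to work: a pair at infinity fixed by $\ro(\Stab_\KK)$, rather than by all of $\ro\big(\Isom(\HKI)\big)$, does not by itself produce a proper invariant subspace for the whole image, and the contraction argument of Proposition~\ref{prop:rep:length} relied on the parabolicity of a single image element and on double transitivity of the source group on its boundary, neither of which is available in your configuration. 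So the proposal has a genuine gap at its central step.

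The paper avoids both difficulties by a reduction to finite-dimensional subgroups, which is the idea your outline is missing. Taking $\KK=\RR$ for definiteness, one exhausts $\Isom(\HRI)$ by an increasing chain $\PO(n,1)$ with dense union; each restriction $\ro|_{\PO(n,1)}$ is non-elementary (Corollary~\ref{cor:rest:n}) and hence has a unique irreducible part $X_n \se \HCI$; these increase with $n$, and the irreducibility of $\ro$ makes $\bigcup_n X_n$ dense in $\HCI$. Since $\big(\PO(n,1), \OO(n)\big)$ is an honest Gelfand pair --- $\OO(n)$ compact, the ambient group locally compact (Remark~\ref{rem:unique:FP:n}) --- the finite-dimensional Proposition~\ref{prop:unique:FP:n} applies in each $X_n$ and gives a unique $\OO(n)$-fixed point $p_n \in X_n$. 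Any $\OO$-fixed point $p$ projects to $X_n$ exactly at $p_n$ by that uniqueness, so density forces $p_n \to p$; as the sequence $(p_n)$ does not depend on $p$, the fixed point is unique. If you want to salvage your approach, this exhaustion is what you would need to insert in place of the direct appeal to a Gelfand property of the infinite-dimensional pair.
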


We emphasise that regardless of $\KK$, irreducibility is assumed over $\CC$ only. To prove this result, we shall reduce it to the following finite\hyph{}dimensional analogue, which is contained in Proposition~5.4 in~\cite{Monod-Py}.

\begin{prop}\label{prop:unique:FP:n}
Let $G$ be a locally compact group and $K<G$ a compact subgroup such that $(G,K)$ is a Gelfand pair. Let $\ro\colon G \to \Isom(\HCI)$ be an irreducible continuous representation. Then $\ro(K)$ has a unique fixed point in $\HCI$.\qed
\end{prop}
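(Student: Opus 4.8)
The plan is to get existence for free from Lemma~\ref{lem:fixed:pt} and to establish uniqueness by exhausting $\HKI$ with finite\hyph{}dimensional subspaces and applying Proposition~\ref{prop:unique:FP:n} on each of them. Existence is immediate: $\ro$ is a continuous action of $\Isom(\HKI)$ by isometries on the complete \cat0 space $\HCI$, so Lemma~\ref{lem:fixed:pt} already guarantees that $\ro(\Stab_\KK)$ fixes some point. The entire content of the statement therefore lies in uniqueness.

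For uniqueness, fix an increasing exhaustion $\HK^2\se\HK^3\se\cdots$ of $\HKI$ by totally geodesic finite\hyph{}dimensional subspaces with $\bigcup_n\HK^n$ dense, and set $G_n=\Isom(\HK^n)<\Isom(\HKI)$ with point\hyph{}stabiliser $K_n<G_n$. Then $K_n$ is compact and $(G_n,K_n)$ is a Gelfand pair (a classical fact, since each $\HK^n$ is a rank\hyph{}one symmetric space); moreover the $K_n$ increase to a dense subgroup of $\Stab_\KK$ and the $G_n$ increase to a dense subgroup of $\Isom(\HKI)$. By Corollary~\ref{cor:rest:n} each restriction $\ro|_{G_n}$ is non\hyph{}elementary; let $Y_n\se\HCI$ be its unique minimal invariant complex hyperbolic subspace (Section~\ref{sec:non-el}), so that the co\hyph{}restriction $\ro_n\colon G_n\to\Isom(Y_n)$ is non\hyph{}elementary and minimal, hence irreducible. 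Since $G_n\se G_{n+1}$ and $Y_{n+1}$ is a non\hyph{}elementary $G_n$\hyph{}invariant subspace, uniqueness of the minimal one forces $Y_n\se Y_{n+1}$. The closed set $\overline{\bigcup_n Y_n}$ is then invariant under the dense subgroup $\bigcup_n G_n$, hence under all of $\Isom(\HKI)$, and so irreducibility of $\ro$ yields $\overline{\bigcup_n Y_n}=\HCI$.

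Now apply Proposition~\ref{prop:unique:FP:n} to the Gelfand pair $(G_n,K_n)$ and the irreducible representation $\ro_n$, whose target $Y_n$ is itself a complex hyperbolic space: the group $\ro(K_n)$ has a \emph{unique} fixed point $p_n$ in $Y_n$. Let $p$ be any point fixed by $\ro(\Stab_\KK)$; by density of $\bigcup_n K_n$ it is fixed by every $\ro(K_n)$. The nearest\hyph{}point projection $\pi_n\colon\HCI\to Y_n$ onto the $G_n$\hyph{}invariant subspace $Y_n$ is $K_n$\hyph{}equivariant, so $\pi_n(p)$ is a $\ro(K_n)$\hyph{}fixed point of $Y_n$ and therefore equals $p_n$. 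Since $Y_n\se Y_{n+1}$ with $\overline{\bigcup_n Y_n}=\HCI$, we have $d(p,Y_n)=d\big(p,\pi_n(p)\big)\to 0$, i.e.\ $\pi_n(p)\to p$. Hence $p=\lim_n p_n$, and the right\hyph{}hand side does not depend on $p$; this proves that the fixed point is unique.

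The main obstacle is not a single hard estimate but the bookkeeping of the reduction: one must verify that the restrictions $\ro|_{G_n}$ stay non\hyph{}elementary so that the varying targets $Y_n$ are genuine irreducible complex hyperbolic spaces eligible for Proposition~\ref{prop:unique:FP:n} (including the borderline case where some $Y_n$ is a complex geodesic, where irreducibility of the resulting projective representation still holds because a non\hyph{}elementary action fixes no line), that these $Y_n$ increase and exhaust $\HCI$, and that the finite\hyph{}dimensional fixed points $p_n$ are compatible under the projections $\pi_n$. Once these points are secured, convergence of the projections converts the finite\hyph{}dimensional uniqueness supplied by the Gelfand\hyph{}pair input into the desired global uniqueness.
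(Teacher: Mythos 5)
There is a genuine gap, and it is fundamental: you have proved the wrong statement. Proposition~\ref{prop:unique:FP:n} concerns an \emph{abstract} locally compact group $G$ with a compact subgroup $K$ forming a Gelfand pair; there is no hyperbolic space attached to $G$, no subgroup $\Stab_\KK$, and nothing to exhaust by finite\hyph{}dimensional subspaces $\HK^n$. Your argument is instead a proof of Proposition~\ref{prop:unique:FP}, the statement about $\ro\colon\Isom(\HKI)\to\Isom(\HCI)$ --- indeed it reproduces, almost step for step, the paper's own proof of that proposition (exhaustion by $\PO(n,1)$ or $\PU(n,1)$, Corollary~\ref{cor:rest:n}, the increasing irreducible parts $Y_n$ with unique fixed points $p_n$, and the projection/density argument). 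That proof is fine for \emph{that} proposition, but it cannot serve here.

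Worse, as a proof of Proposition~\ref{prop:unique:FP:n} your argument is circular: the key step is ``apply Proposition~\ref{prop:unique:FP:n} to the Gelfand pair $(G_n,K_n)$,'' i.e.\ you invoke the very proposition you are supposed to prove, applied to the Gelfand pairs $(\Isom(\HK^n), K_n)$ --- which is exactly the case the paper needs it for. The actual content of Proposition~\ref{prop:unique:FP:n} is the harmonic\hyph{}analytic fact that for a Gelfand pair the algebra of bi\hyph{}$K$\hyph{}invariant functions is commutative, so that an irreducible representation admits an (at most) one\hyph{}dimensional space of $K$\hyph{}spherical vectors; translated to isometric actions on $\HCI$ this yields uniqueness of the $\ro(K)$\hyph{}fixed point. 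Your proposal never touches the Gelfand pair hypothesis except through the circular citation. The paper itself does not reprove this: it records that the statement ``is contained in Proposition~5.4 in~\cite{Monod-Py}'' and emphasises that its proof ``relies on abstract harmonic analysis'' and, crucially, not on any classification or exhaustion argument of the kind you propose.
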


\begin{rem}\label{rem:unique:FP:n}
The fact that $G=\PO(n,1)$ and $K=\OO(n)$ form a Gelfand pair, and likewise in the complex case, can be found e.g.\ in~\cite{Faraut83}.
\end{rem}

\begin{proof}[Proof of Proposition~\ref{prop:unique:FP}]
For definiteness of the notation, we take $\KK=\RR$. We choose an increasing sequence
\begin{equation*}
\PO(n,1) \se \PO(n+1,1) \se \ldots \se \Isom(\HRI)
\end{equation*}
with dense union (as in~\cite[\S5.B]{Monod-Py2_arx}). The restriction of $\ro$ to $\PO(n,1)$ is non\hyph{}elementary by Corollary~\ref{cor:rest:n} and hence it admits a unique irreducible component, see Section~\ref{sec:non-el} above. We denote it by $X_n\se \HCI$. The uniqueness implies $X_n\se X_{n+1}$ and the irreducibility of $\ro$ on $\Isom(\HRI)$ implies that the union of all $X_n$ is dense in $\HCI$. In view of Proposition~\ref{prop:unique:FP:n}, there is in $X_n$ a unique $\OO(n)$-fixed point, which we denote by $p_n$.

In order to conclude the proof, it suffices to show that any $\OO$-fixed point $p$, which exists by Lemma~\ref{lem:fixed:pt}, satisfies $\lim_n p_n = p$. The projection of $p$ to $X_n$ coincides with $p_n$ by uniqueness of the latter and hence the conclusion follows by density of the union of all $X_n$.
\end{proof}

\subsection{Complexification}
One way to obtain representations to $\UU(1, \infty)$ and hence to $\Isom(\HCI)$ is to complexify a representation to $\OO(1, \infty)$. We record that this operation preserves irreducibility in the case of representations of $\Isom(\HR^n)$.

\begin{prop}\label{prop:complex:irred}
Let $\ro$ be a continuous (real) linear representation of $\Isom(\HR^n)$ to $\OO(1, \infty)$, where $2\leq n \leq \infty$.

If $\ro$ is irreducible, then its complexification is an irreducible (in the complex sense) representation of  $\Isom(\HR^n)$ to  $\UU(1, \infty)$.
\end{prop}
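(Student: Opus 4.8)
The plan is to show directly that the complexified space $V_\CC=(\RR\oplus H)\otimes_\RR\CC=\CC\oplus H_\CC$ carries no proper nonzero $G$-invariant complex subspace, where $G=\Isom(\HR^n)$ and $V=\RR\oplus H$ is the real Minkowski space on which $\ro$ acts. Since $\ro$ preserves the real form, its complexification preserves the associated Hermitian Minkowski form and automatically lands in $\UU(1,\infty)$; the only thing to prove is the absence of invariant subspaces.

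Let $\sigma$ denote the conjugation of $V_\CC$ whose fixed real points are $V$; because $G$ acts by real operators, the complexified action commutes with $\sigma$. First I would record the standard observation that every $\sigma$-invariant complex subspace $U$ satisfies $U=(U\cap V)_\CC$ (write $u=\tfrac12(u+\sigma u)+i\cdot\tfrac{1}{2i}(u-\sigma u)$), and is therefore trivial by the real irreducibility of $\ro$. Now suppose for contradiction that $W\se V_\CC$ is a proper nonzero $G$-invariant complex subspace. Then $\sigma W$ is also $G$-invariant, and both $W\cap\sigma W$ and $W+\sigma W$ are $\sigma$-invariant and $G$-invariant; hence $W\cap\sigma W=0$ and $W+\sigma W=V_\CC$, giving the splitting $V_\CC=W\oplus\sigma W$.

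From this splitting I would build a $G$-invariant complex structure $J$ on the real space $V$: writing each $v\in V$ as $v=v_W+\sigma(v_W)$ with $v_W\in W$ its $W$-component, set $Jv=i\,v_W-i\,\sigma(v_W)$. A direct check shows $Jv\in V$, that $J^2=-\Id$, and that $J$ commutes with $\ro(g)$ for every $g\in G$, since $\ro(g)$ preserves $W$ and commutes with $\sigma$. Thus the existence of a proper invariant complex subspace is equivalent to the existence of a $G$-invariant complex structure on $V$. The contradiction then comes from Proposition~\ref{prop:O:unique}: $\ro(\OO(n))$ preserves a \emph{unique} line $L$ in $V$. As $J$ commutes with $\ro(\OO(n))$, the line $JL$ is again $\ro(\OO(n))$-invariant, whence $JL=L$ by uniqueness. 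But $L$ is one\hyph{}dimensional over $\RR$, and no endomorphism of a real line can satisfy $J^2=-\Id$. This contradiction shows that no proper invariant complex subspace exists, i.e.\ the complexification is irreducible.

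The main obstacle is organisational rather than technical: the only genuine case to rule out is the ``complex type'' splitting $V_\CC=W\oplus\sigma W$, and the entire weight of the argument rests on importing the uniqueness of the $\OO(n)$-invariant line from Proposition~\ref{prop:O:unique}. The linear-algebra verifications for $J$ are routine and dimension\hyph{}independent, so the proof applies uniformly whether $n$ is finite or infinite.
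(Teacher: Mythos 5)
Your overall strategy---pass to the conjugation $\sigma$, use an invariant subspace $W$ to build an equivariant complex structure $J$ on the real space $V$, and contradict the uniqueness of the $\ro(\OO(n))$-invariant line from Proposition~\ref{prop:O:unique}---is in essence the paper's strategy, and several of your verifications are sound: $W\cap\sigma W=0$, the identity $U=(U\cap V)_\CC$ for $\sigma$-invariant subspaces, the formula for $J$, its equivariance and $J^2=-\Id$. The genuine gap is the claim $W+\sigma W=V_\CC$. The representation space here is \emph{always} infinite-dimensional (the target is $\OO(1,\infty)$ regardless of $n$), and in infinite dimensions the sum of two closed subspaces with trivial intersection need not be closed. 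What your argument actually shows is that $\overline{W+\sigma W}$ is a closed, $G$-invariant, $\sigma$-invariant complex subspace, hence equals $V_\CC$; that is, $W+\sigma W$ is only \emph{dense}. Consequently the decomposition $v=v_W+\sigma(v_W)$ exists only for $v$ in the dense invariant subspace $D=V\cap(W+\sigma W)$, and your $J$ is merely a densely defined operator---which is exactly why the paper speaks of a ``closed densely defined complex structure''.

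This is not a cosmetic point, because your final step applies $J$ to the unique $\ro(\OO(n))$-invariant line $L$ of Proposition~\ref{prop:O:unique}, and nothing guarantees $L\se D$: a dense invariant subspace of an orthogonal representation need not contain any given invariant line (nor any nonzero fixed vector). The paper closes precisely this hole with Lemma~\ref{lem:fixed:pt}: applied to the complex hyperbolic space associated with $W$ itself (which contains the irreducible part of the complexified representation), it produces an $\OO(n)$-fixed point there, i.e.\ an $\OO(n)$-invariant line \emph{inside} $W$; since $W$ is the graph of $J$, this gives an $\OO(n)$-invariant real line lying in the domain of $J$ by construction, and then its image under $J$ is a second, distinct invariant line, contradicting Proposition~\ref{prop:O:unique}. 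If you insert such a step---any argument producing an $\OO(n)$-invariant line inside the domain of $J$---your proof goes through; without it, the argument fails in the infinite-dimensional setting it is meant for.
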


\begin{proof}[Proof of Proposition~\ref{prop:complex:irred}]
  Let $V$ be the underlying real topological vector space for $\ro$ and suppose for a contradiction that the complexified representation to $V\otimes\CC$ is not irreducible. We claim that $V$ admits a closed densely defined complex structure $J$ equivariant under $\Isom(\HR^n)$. Let indeed $W < V\otimes\CC$ be a proper invariant closed $\CC$-linear subspace. By considering its $\RR$-linear projections to the real and imaginary parts of $V\otimes\CC$, we see that $W$ is the graph of a densely defined ($\RR$-linear) operator $J$ in $V$. The fact that $W$ is stable under multiplication by $i$ forces $J^2=-1$ on its domain and $J$ is equivariant by construction. By Lemma~\ref{lem:fixed:pt} is there a point in the complex hyperbolic space associated to $W$ that is fixed by $\OO(n)$, noting that $W$ contains the unique irreducible part of $\ro$. It follows that $\OO(n)$ preserves a real line in $V$ under $\ro$ and this line is in the domain of $J$ by construction. Since $J$ is equivariant, $\OO(n)$ preserves also another real line, namely $JV$, which is different from $V$ since $J^2=-1$. This contradicts Proposition~\ref{prop:O:unique}.
\end{proof}

\section{Classification results}
\subsection{Rigidity of the argument}
\begin{flushright}
\begin{minipage}[t]{0.7\linewidth}\itshape\small
--- I came here for a good argument.\\
--- No you didn't, you came here for an argument.
\begin{flushright}
\upshape\small
Monty Python, 2 November 1972.
\end{flushright}
\end{minipage}
\end{flushright}
We begin with a few general observations. Conjugating a representation by an anti\hyph{}holomorphic isometry will reverse the sign of the Cartan argument and therefore the classification can ignore this sign, observing that $\Isom(\HR^\kappa)$ has a \emph{unique} non-trivial morphism to the group $\{\pm 1\}$.

In preparation for Theorem~\ref{thm:t:s}, let $0\leq t\leq 1$ and $s\in \RR$ be such that $(\beta^t, s \alpha)$ is of complex hyperbolic type, where $(\beta, \alpha)$ is the tautological kernel on $\HC^1$ given by $\cosh d$ and $\cartan$. Let $f\colon \HC^1\to \HCI$ be the corresponding continuous map.

Consider three points $x,y,z$ in $\HC^1$ forming an equilateral triangle of small side-length~$L$. As $L$ goes to zero, the area of $(x,y,z)$ behaves like $\sqrt 3 L^2/4$. Thus, in view of Remark~\ref{rem:omega:area}, the Cartan argument $\cartan(x,y,z)$ behaves like $\sqrt 3 L^2/2$ in absolute value. On the other hand, the image points $f(x),f(y),f(z)$ need not lie in a complex geodesic, nor indeed is there a canonical $2$-dimensional triangle spanned by them. They are still, however, mutually at equal distance, namely $L'$ determined by $\cosh L' = (\cosh L)^t$. Therefore, as $L'$ goes to zero, we have still an upper bound for the Cartan argument in the image, namely
\begin{equation*}
\limsup_{L'\to 0} \frac{\left|  \cartan(f(x),f(y),f(z))\right|}{{L'}^2} \leq \frac{\sqrt 3}{2}.
\end{equation*}
Therefore, if $\cartan(f(x),f(y),f(z))=s \cartan(x,y,z)$ holds for all $x,y,z$, then we deduce that $|s|$ is bounded by $\lim_{L\to 0} (L'/L)^2$. Using $\cosh L' = (\cosh L)^t$ we conclude that $|s|\leq t$ must hold.

Turning to the heart of the proof, we shall write $z^{t,s}$ for the \emph{mixed power} of a complex number $z$ in the slit plane $\CC\setminus \RR_{<0}$ defined by
\begin{equation*}
z^{t,s} = |z|^t\,e^{s \arg(z) \,i}.
\end{equation*}
Notice that this is a holomorphic function of $z$ only if $s=t$, and anti\hyph{}holomorphic only if $s=-t$.

\begin{proof}[Proof of Theorem~\ref{thm:t:s}]
Let $\zeta\neq 1$ be a third root of one. Given any complex number $z$, we claim that the matrix
\begin{equation*}
M(z)=\begin{pmatrix}
1 & z & \ol z\\
\ol z & 1 & z\\
z & \ol z & 1
\end{pmatrix}
\end{equation*}
is positive semi-definite if and only if $z$ belongs to the Euclidean triangle spanned by the powers of $\zeta$. In particular, only if $\Reel(z) \geq -1/2$. The claim follows from Sylvester's criterion because we have the factorisation
\begin{equation*}
\det M(z) = (2a + 1) (a-1 + \sqrt 3 b) (a-1 - \sqrt 3 b) 
\end{equation*}
for $z=a+i b$ and because the principal minors are $1-|z|^2$.
\begin{center}
\begin{tikzpicture}[scale=1.5]
\fill[fill=black!25] (-0.5, 0.866) -- (-0.5, -0.866) -- (1,0);
\draw (0,0) circle (1);
\draw (-0.5,1.2) -- (-0.5,-1.2);
\draw (1.289,-0.167) -- (-0.789,1.033);
\draw (1.289,0.167) -- (-0.789,-1.033);
\draw (-0.85,0.866) node {$\zeta$};
\draw (-0.85,-0.866) node {$\zeta^2$};
\draw (1.4,0) node {$1$};
\end{tikzpicture}
\end{center}
For every $\epsi>0$, we define four points of $\HC^1$ by
\begin{equation*}
x_0=1\oplus 0\kern5mm \text{and} \kern5mm x_j= (1+\epsi)^{1/2} \oplus \epsi^{1/2} \zeta^j
\end{equation*}
when $j=1,2,3$. Notice that we are exactly in the model considered in Remark~\ref{rem:proof:KCHT:Phi} with here $X_j=x_j$. Specifically, for $j,k\geq 1$ we have $\beta(x_j, x_0)= (1+\epsi)^{1/2}$ and 
\begin{equation}\label{eq:t:s:B}
e^{i \alpha(x_0, x_j, x_k)}\beta(x_j, x_k)= B(x_j, x_k) = 1+ \epsi  - \epsi\, M(\zeta)_{j,k}.
\end{equation}
Applying Definition~\ref{def:KCHT} to $(\beta^t, s \alpha)$, the following $3$-by-$3$ matrix must be positive semi-definite:
\begin{equation*}
(1+ \epsi)^t - e^{i s \alpha(x_0, x_j, x_k)} \, \beta(x_j, x_k)^t \kern10mm(j,k\geq 1).
\end{equation*}
Using~\eqref{eq:t:s:B}, this matrix is
\begin{equation*}
(1+\epsi)^t \,\one\one\ctran - \left(1+ \epsi  - \epsi \, M(\zeta) \right)^{t,s}.
\end{equation*}
The diagonal terms being $(1+\epsi)^t -1$, this matrix can be expressed as
\begin{equation*}
\left((1+\epsi)^t -1\right)
\,\,M\left( \frac{(1+\epsi)^t - \left(1+ \epsi -  \epsi \zeta \right)^{t,s}}{(1+\epsi)^t -1} \right).
\end{equation*}
In particular, the positivity condition $\Reel(z) \geq -1/2$ for $M(z)$ implies, after regrouping terms:
\begin{equation}\label{eq:real:cond}
2\Reel\left(1+ \epsi -  \epsi \zeta \right)^{t,s} - 3 (1+\epsi)^t \leq -1.
\end{equation}
We claim that this inequality holding for all $\epsi>0$ forces $s\geq t$. Indeed, consider the left hand side of the inequality~\eqref{eq:real:cond} as a function $R(\epsi)$ of the parameter $\epsi$, noting $R(0)=-1$. To perform the computation of the mixed power, observe that $1+ \epsi -  \epsi \zeta$ has modulus $(1+3\epsi + 3 \epsi^2)^{1/2}$ and argument $\pm \arctan(\sqrt3 \epsi /(2+3\epsi))$, the sign depending on our choice of $\zeta$. Thus we compute $R'(0)=0$. Therefore, considering~\eqref{eq:real:cond} for $\epsi>0$ small enough, we must have $R''(0)\leq 0$. A further direct computation gives $R''(0) = 3t^2 - 3s^2$. It thus follows $|s|\geq t$ and the proof is complete because we already know $|s|\leq t$.
\end{proof}

\subsection{Proof of Theorem~\ref{thm:rep:classify}}
Consider an arbitrary irreducible self\hyph{}representation $\ro$ of $\Isom(\HCI)$. The proof given in~\cite{Monod-Py2_arx} for the automatic continuity of irreducible self\hyph{}representations of $\Isom(\HRI)$ can be applied as is to deduce the corresponding statement for $\Isom(\HCI)^\circ$ and hence for $\Isom(\HCI)$. Indeed, the preliminary Proposition~6.1 in~\cite{Monod-Py2_arx} holds with the same proof, and the results of Ricard--Rosendal~\cite{Ricard-Rosendal} and of Tsankov~\cite{Tsankov13} that we cited for $\OO$ are also proved for $\UU$ in the same references.

We choose an origin $q\in \HRI\se \HCI$ and denote by $\tUU$ and $\OO$ its stabiliser in $\Isom(\HCI)$ and in $\Isom(\HRI)$ respectively, see Section~\ref{sec:isom}. Since $\ro$ is irreducible, Proposition~\ref{prop:unique:FP} implies that there is a unique point $p\in\HCI$ fixed by $\ro(\tUU)$. Let $f_0\colon \HCI\to\HCI$ be the corresponding $\ro$-equivariant map; in particular $p=f_0(q)$.

The restriction of $\ro$ to $\Isom(\HRI)$ is non\hyph{}elementary by Corollary~\ref{cor:rest:n} and thus admits a unique irreducible part $\HC^{\lambda} \se \HCI$. The assumption made in Theorem~\ref{thm:rep:classify} guarantees that the point $p$ belongs to $\HC^{\lambda}$. Combining Lemma~\ref{lem:pull-back:1} with Lemma~\ref{lem:tot:real}, we deduce that the restriction of $\ro$ to $\Isom(\HRI)$ preserves a real hyperbolic subspace $\HR^{\lambda'}$ in $\HC^{\lambda}$. Its $\RR$-irreducible part $\HR^{\lambda''}\se \HR^{\lambda'}$ can be described thanks to the classification given in Theorem~II of~\cite{Monod-Py2_arx}. Namely, there is $0<t\leq 1$ and a $\ro(\Isom(\HRI))$-equivariant map $f\colon \HRI\to \HR^{\lambda''}$ such that
\begin{equation}\label{eq:dist:on:HR}
\cosh d\big(f(x), f(y)\big) = \big(\cosh d(x,y)\big)^t
\end{equation}
holds for all $x,y\in\HRI$.

Now we use the fact that $p$ is the unique $\ro(\OO)$-fixed point in $\HC^{\lambda}$ according to Proposition~\ref{prop:unique:FP}. Thus $f(q)=p=f_0(q)$ and it follows that $f$ is the restriction of $f_0$ to $\HRI$. In view of the metric double transitivity of $\HCI$, the relation~\eqref{eq:dist:on:HR} implies that the same relation holds for all $x,y\in\HCI$, with now $f_0$ instead of $f$. In other words, the invariant kernel of complex hyperbolic type $(\beta, \alpha)$ on $\HCI$ associated to $\ro$ satisfies $\beta(x,y)=(\cosh d(x,y))^t$. In order to conclude, using Theorem~\ref{thm:KCHT-char}, that $\ro$ is conjugated to the representation of Theorem~\ref{thm:rep:exist} determined by $t$, it only remains to justify that $\alpha$ is given by $\alpha = \pm t \cartan$, recalling that the sign can be changed by conjugating with an anti\hyph{}holomorphic isometry of the target. This last step, however, follows from Theorem~\ref{thm:t:s} applied to any complex geodesic line. This completes the proof of Theorem~\ref{thm:rep:classify}.\qed

\section{Trees and their kernels}
\subsection{The distance function as a kernel}\label{sec:tree}
Using a basic explicit embedding, Haagerup showed in 1979 that the distance function on a simplicial tree is of conditionally negative type (Lemma~1.2 in~\cite{Haagerup79}). In fact his focus was on free groups and his goal was to show that for all $\lambda\geq 1$ the kernel
\begin{equation}\label{eq:tree:UH}
(x,y) \longmapsto \lambda^{-d(x,y)}
\end{equation}
is of positive type, which he deduced immediately from the conditionally negative type statement using Schoenberg's theorem. This result launched the entire theory of what is now called the Haagerup approximation property~\cite{Cherix-Cowling-Jolissaint-Julg-Valette}.

\smallskip
The content of Proposition~\ref{prop:tree} is that the kernel
\begin{equation}\label{eq:tree:hyp}
(x,y) \longmapsto \lambda^{d(x,y)}
\end{equation}
is of hyperbolic type. The glaring symmetry between~\eqref{eq:tree:UH} and~\eqref{eq:tree:hyp} will be discussed in the general setting in Section~\ref{sec:inverse} below; in particular, our statement on~\eqref{eq:tree:hyp} formally implies Haagerup's result on~\eqref{eq:tree:UH}.

\begin{proof}[Proof of Proposition~\ref{prop:tree}]
Let $(X, d)$ be a metric tree, fix $n\in \NN$ and choose $n+1$ points $x_0, x_1, \ldots, x_n\in X$. We argue by induction on the combinatorial type of the sub-tree spanned by the $x_j$. Let thus $c_1, \ldots, c_n\in\RR$; we need to show that
\begin{equation}\label{eq:tree}
\sum_{j,k\geq 1} c_j c_k \left( \lambda^{d(x_j, x_0) + d(x_0, x_k)} - \lambda^{d(x_j, x_k)} \right) \geq 0.
\end{equation}
The term in brackets vanishes whenever $x_0$ separates $x_j$ and $x_k$ in $X$. Therefore, the sum~\eqref{eq:tree} splits into a number ($\geq 1$) of similar sums where $x_0$ does not separate any of the pairs of other points. We can therefore assume that $x_0$ is a leaf of the sub-tree spanned by all $x_j$. We can moreover reduce immediately to the case where all $x_j$ with $j\neq 0$ are distinct from $x_0$. Thus, being a leaf, $x_0$ does not belong to the sub-tree $Y\se X$ spanned all $x_j$ with $j\neq 0$.

We now consider the flow $x_0^{(t)}$, defined for $t\geq 0$ small enough, where $x_0^{(t)}$ is the unique point at distance $t$ from $x_0$ on the geodesic from $x_0$ to $Y$. We denote by $\Sigma(t)$ the sum~\eqref{eq:tree} with $x_0^{(t)}$ in lieu of $x_0$. Let $t_\mathrm{max}>0$ be the time at which $x_0^{(t)}$ joins the sub-tree $Y$. We know by induction that $\Sigma(t_\mathrm{max})\geq 0$; therefore it is sufficient to establish that $\Sigma$ has a non\hyph{}positive derivative $\dot\Sigma$ with respect to $t$ on $(0, t_\mathrm{max})$. The term in the bracket of the sum for $\Sigma(t)$ is
\begin{equation*}
\lambda^{d(x_j, x_0) -t + d(x_0, x_k) -t} - \lambda^{d(x_j, x_k)}.
\end{equation*}
This implies
\begin{equation*}
\dot\Sigma(t) = \sum_{j,k\geq 1} c_j c_k \left( -2 \log \lambda \, \lambda^{d(x_j, x_0) -t + d(x_0, x_k) -t}  \right).
\end{equation*}
This can be re-written as
\begin{equation*}
-2 \log \lambda \, \sum_{j,k\geq 1} \left (c_j \lambda^{d(x_j, x_0) -t }\right) \left(  c_k \lambda^{ d(x_0, x_k) -t}  \right)  = -2 \log \lambda \,\left(  \sum_{j\geq 1} c_j \lambda^{d(x_j, x_0) -t }\right)^2
\end{equation*}
which is indeed non\hyph{}positive as claimed.
\end{proof}

\subsection{The $\RR$-trees of $\GL_2$}
Recall that the field $\CC$ admits countable (non-discrete) valuations~$\nu$; for instance, given a prime~$q$, any isomorphism of $\CC$ with an algebraic closure of $\QQ_q$ gives such a valuation, whose value group we realise as a countable subgroup of~$\RR$. Following Tits~\cite[\S5]{Tits77}, this provides us with an isometric action of $\GL_2(\CC)$ on an $\RR$-tree $(X_\nu, d_\nu)$, which descends to an action of $\PGL_2(\CC)$; we refer to the detailed exposition of~\cite{Alperin-Bass}. Writing $A\se \CC$ for the valuation ring of $\nu$, we choose the lattice $L=A\oplus A$ as a base-point for $X_\nu$. We have for instance $d_\nu(L, L') = \nu(a)$ if $L'=(a A)\oplus A$ with $a\in A$. Moreover, the element $s=\left(\begin{smallmatrix}q & 0\\ 0 & 1\end{smallmatrix}\right)$ has translation length $\ell(s)=1$, see~\cite[(B.7)]{Alperin-Bass}.

We now fix $\lambda> 1$. Combining Proposition~\ref{prop:tree} with Proposition~\ref{prop:CHT:char} and Theorem~\ref{thm:R:rep}, we obtain a representation $\ro$ of $\PGL_2(\CC)$ on $\HR^\kappa$ for some $\kappa$. More precisely, we have a $\ro$-equivariant map $f\colon X_\nu\to \HR^\kappa$ such that
\begin{equation}\label{eq:tree:H}
\cosh d\big(f(x), f(y)\big) = \lambda^{d_\nu(x,y)} \kern10mm(\forall\,x,y\in X_\nu).
\end{equation}
If we combine $\ell(s)=1$ with the formula for translation lengths given in~\eqref{eq:length:F}, then the relation~\eqref{eq:tree:H} implies $\ell(\ro(s))=\log\lambda>0$. In particular $\ro(s)$ is hyperbolic and $\ro$ is non\hyph{}elementary by Lemma~\ref{lem:non-elem}. At this point we can consider the unique irreducible part $\HR^\eta$ of $\ro$. Since $\PGL_2(A)$ fixes the point $L\in X_\nu$, it fixes some point in $\HR^\eta$, and the $\PGL_2(\CC)$-orbit of that point is countable since the value group is countable. The case of $\PGL_2(\RR)$ is obtained after taking again the unique irreducible part of the restriction of $\ro$ to $\PGL_2(\RR)$; for this, we need to know that the restriction is non\hyph{}elementary, which follows as above from Lemma~\ref{lem:non-elem} because $s$ is contained in $\PGL_2(\RR)$.

To complete the proof of Theorem~\ref{thm:wild}, it only remains to observe that $\eta$ cannot be finite; one way to see this (maybe not the simplest) is to invoke~\cite{Borel-Tits73}.\qed

\begin{rem}\label{rem:compare:local}
It is instructive to contrast the geometric properties of the tree representations of~\cite{Burger-Iozzi-Monod} and of the above section against the smooth representations of Theorem~\ref{thm:rep:exist}, Theorem~\ref{thm:rep:exist:n} and their real counterparts from~\cite{Monod-Py},\cite{Monod-Py2_arx}. To this end, let us denote by $d_0$ the ``source'' distance on the trees or hyperbolic spaces, and by $d_1$ the ``target'' hyperbolic distance. When $d_0 \to \infty$, we find in all cases that $d_1$ approaches $m \, d_0 + h$ for some constants $m, h\geq 0$. For instance, $m=\log\lambda$ in the notation of the tree case and $m=t$ for the representations of~\cite{Monod-Py} and of Theorem~\ref{thm:rep:exist}.

By contrast, the behaviour for $d_0\to 0$ presents radical differences. It was observed in~\cite{Monod-Py},\cite{Monod-Py2_arx} that the irreducible representations of real hyperbolic groups give locally again a linear growth, now of the form $d_1\sim m' \,d_0$ with $m'=  \sqrt t$ in the infinite\hyph{}dimensional case and $m'=(t(n+t-1)/n)^{1/2}$ in dimension $n$. On the other hand, such a tame local behaviour is incompatible with the branching of the trees, where we have $d_1\sim m'' \, \sqrt{d_0}$ with $m''=\sqrt{2 \log \lambda}$. As we shall see in Section~\ref{sec:free}, there are also advantages to adapting this branching to the case of hyperbolic spaces.
\end{rem}

\section{Free products of real hyperbolic spaces}\label{sec:free}
\subsection{Anisotropic deformations}
Our work with P.~Py~\cite{Monod-Py2_arx} classifies all irreducible self\hyph{}representations of $\Isom(\HRI)$, showing that the corresponding maps $\HRI\to\HRI$ are defined by the power kernels. We now want to discuss what natural maps $\HRI\to\HRI$ are still left \emph{besides} this classification. In other words, maps $\HRI\to\HRI$ that are equivariant for a self\hyph{}representation $\ro$ which is \emph{not irreducible}.

\smallskip
The most complete breakdown of irreducibility is the case of elementary representations. In that situation, either $\ro$ fixes point in $\HRI$, or it fixes point or pair in $\partial\HRI$. The former case reduces completely to the study of orthogonal representations, a classical topic that we shall not revisit here. The latter cases can be analysed in terms of a Busemann characters together with affine isometric actions, again a classical topic; this reduction is briefly recalled in Section~\ref{sec:FP:infty} below.

\medskip\itshape
We are thus left with representations $\ro$ that are non\hyph{}elementary but not irreducible.\upshape\ We recall that such a representations has a unique irreducible part together with a complement that is anisotropic (i.e.\ orthogonal), see~\cite[\S4]{Burger-Iozzi-Monod}. Geometrically, $\HRI$ contains a unique minimal $\ro$-invariant hyperbolic subspace, on which the action is indeed described by the classification. However, there is a transversal part described by the orthogonal representation, and this \emph{anisotropic deformation} can give rise to interesting maps $\HRI\to\HRI$, or equivalently to interesting kernels of real hyperbolic type that are not powers of the tautological kernel $\cosh d$.

\smallskip
The following lemma records a general construction of anisotropic deformations.

\begin{lem}\label{lem:anis}
Let $\beta$ be a kernel of real hyperbolic type on a set $X$. Let $\Phi$ be a (real) kernel of positive type on $X$ taking the constant value~$1$ on the diagonal. Then
\begin{equation*}
\beta' = \cosh^2\delta\cdot\beta  - \sinh^2\delta\cdot \Phi
\end{equation*}
is a kernel of real hyperbolic type on $X$ for all $\delta\geq 0$.
\end{lem}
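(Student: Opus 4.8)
The plan is to build an explicit realisation of $\beta'$ in a real hyperbolic space and then invoke the characterisation of Proposition~\ref{propdef:KHT}. First I would dispose of the elementary preconditions required by that proposition. The function $\beta'$ is symmetric because $\beta$ and $\Phi$ are, and it takes the value $\cosh^2\delta-\sinh^2\delta=1$ on the diagonal. Moreover it lands in $\RR_{\geq 0}$: since $\beta\geq 1$ (a hyperbolic cosine) and $\Phi\leq 1$ (Cauchy--Schwarz for the positive-type kernel $\Phi$, whose diagonal is $1$), we get $\beta'\geq\cosh^2\delta-\sinh^2\delta=1>0$.

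Next I would assemble two lifts. By Proposition~\ref{propdef:KHT} there is a map $f\colon X\to\HR^\kappa$ with $\beta(x,y)=\cosh d(f(x),f(y))$; lifting each $f(x)$ to its representative $F(x)=z(x)\oplus u(x)$ on the forward sheet $\{B(v,v)=1\}$ of the Minkowski space $\RR\oplus H$, one has $z(x)>0$ and $B(F(x),F(y))=\beta(x,y)$ (no modulus is needed, forward vectors satisfying $B\geq 1$). Simultaneously, the GNS construction for $\Phi$ provides a real Hilbert space $H'$ and a map $h\colon X\to H'$ with $\her{h(x)}{h(y)}=\Phi(x,y)$; as $\Phi(x,x)=1$, the image of $h$ lies on the unit sphere of $H'$.

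The key step is to combine these data in the enlarged Minkowski space $\RR\oplus H\oplus H'$, whose form $B'$ has anisotropic (negative) part the sum of the inner products of $H$ and of $H'$, by setting
\[
F'(x)=\big(\cosh\delta\,z(x)\big)\oplus\big(\cosh\delta\,u(x)\big)\oplus\big(\sinh\delta\,h(x)\big).
\]
A one-line computation then gives $B'(F'(x),F'(y))=\cosh^2\delta\,\beta(x,y)-\sinh^2\delta\,\Phi(x,y)=\beta'(x,y)$, and in particular $B'(F'(x),F'(x))=1$. Since the first coordinate $\cosh\delta\,z(x)$ is positive, each $F'(x)$ again lies on the forward sheet, so $f'(x)=[F'(x)]$ defines a map into $\HR^{\kappa'}$ for some cardinal $\kappa'$ realising $\cosh d(f'(x),f'(y))=\beta'(x,y)$. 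Proposition~\ref{propdef:KHT} then concludes that $\beta'$ is of real hyperbolic type.

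The algebra being immediate, the only point demanding care is geometric rather than computational: one must confirm that the assembled vectors $F'(x)$ are genuinely positive and lie on the \emph{forward} sheet, for this is exactly what guarantees that $B'$ computes $\cosh d$ with the correct sign (and, a fortiori, that $\beta'\geq 1$). This is the reverse Cauchy--Schwarz inequality for forward timelike vectors, and it is the step that uses the hypothesis $\delta\geq 0$ (so that $\cosh\delta,\sinh\delta$ are real and $\cosh\delta>0$). Everything else is bookkeeping of the Minkowski form.
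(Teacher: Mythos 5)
Your proof is correct and is essentially the paper's own argument: both scale the hyperboloid lift of $\beta$ by $\cosh\delta$, append $\sinh\delta$ times the GNS map for $\Phi$ in the enlarged Minkowski space $\RR\oplus H\oplus H'$, and read off that the resulting map realises $\beta'$ as $\cosh d$. Your extra verifications (diagonal value, $\beta'\geq 1$, forward-sheet positivity) are the details the paper leaves implicit in the phrase ``by definition, $f'$ ranges in the upper sheet of the hyperboloid for $B'$.''
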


Rather than verifying an inequality, we shall prove this by inspecting the associated geometric situation so that it will become clear that this formula corresponds to the anisotropic deformations described in the paragraphs above. 
The parameter $\delta$ turns out to be the (constant) distance between the original and the deformed images of $X$.

\begin{proof}[Proof of Lemma~\ref{lem:anis}]
Associated to $\beta$ we have a map $f\colon X\to \RR\oplus H$ to the upper sheet of the hyperboloid defined by the Minkowski form $B$ (this corresponds to the real case of Remark~\ref{rem:proof:KCHT:Phi}). Let $h\colon X\to V$ be the map to a Hilbert space $V$ associated to $\Phi$ by the GNS construction. Consider the Minkowski space $\RR\oplus H\oplus V$ with the form $B'=B-\her\cdot\cdot$. Finally, define $f'\colon X\to \RR\oplus H\oplus V$ by
\begin{equation*}
f' =  (\cosh\delta\cdot f)  \oplus (\sinh\delta\cdot h)
\end{equation*}
By definition, $f'$ ranges in the upper sheet of the hyperboloid for $B'$ and the associated kernel is $\beta'$; moreover, $d(f, f') = \delta$.
\end{proof}

\subsection{A deformation of $\HR^\kappa$}
We illustrate the use of this construction with an anisotropic deformation of $\HR^\kappa$, interesting even for $\kappa$ finite, enjoying the following two properties. On the large scale the embedding is essentially isometric, but on the small scale it is as singular as the tree embeddings (Remark~\ref{rem:compare:local}). This is all encapsulated in the kernel, as follows.

\begin{prop}\label{prop:exp}
The kernel $(x,y) \mapsto e^{d(x,y)}$ is of real hyperbolic type on $\HR^\kappa$.
\end{prop}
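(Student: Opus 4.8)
The plan is to realise the kernel $e^{d}$ as a single \emph{anisotropic deformation} of the tautological kernel $\cosh d$, so that Lemma~\ref{lem:anis} applies with one explicitly chosen parameter $\delta$. The entire argument then reduces to an elementary hyperbolic identity together with the classical fact that the distance on $\HR^\kappa$ is of conditionally negative type.

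First I would record the identity $e^{d(x,y)} = 2\cosh d(x,y) - e^{-d(x,y)}$. Choosing $\delta=\arcosh\sqrt2$, so that $\cosh^2\delta = 2$ and $\sinh^2\delta = 1$, this rewrites as
\[
e^{d(x,y)} = \cosh^2\delta\cdot \cosh d(x,y) - \sinh^2\delta\cdot e^{-d(x,y)},
\]
which is exactly the shape of the deformation appearing in Lemma~\ref{lem:anis}. The role of $\beta$ is played by $\cosh d$, and the role of $\Phi$ by $e^{-d}$.

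Then I would check the hypotheses of that lemma. The kernel $\beta=\cosh d$ is of real hyperbolic type by the tautological case of Proposition~\ref{propdef:KHT} (take $f=\mathrm{id}$), and $\Phi=e^{-d}$ plainly takes the constant value $1$ on the diagonal. The one substantive point — which I expect to be the main obstacle — is that $e^{-d}$ must be a real kernel of \emph{positive} type on $\HR^\kappa$. I would dispatch this by invoking the conditional negativity of the hyperbolic distance: the kernel $(x,y)\mapsto d(x,y)$ on $\HR^\kappa$ is of conditionally negative type (Faraut--Harzallah), so by Schoenberg's theorem (used already in Section~\ref{sec:tree}) the kernel $e^{-t\,d}$ is of positive type for every $t>0$, in particular for $t=1$. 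Since positivity of type is a condition on finite subsets and any finite subset of $\HR^\kappa$ is contained in a finite-dimensional $\HR^n\se\HR^\kappa$, the case of infinite $\kappa$ reduces at once to the classical finite-dimensional statement.

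With these pieces assembled, Lemma~\ref{lem:anis} applies verbatim and yields that $e^{d}$ is of real hyperbolic type on $\HR^\kappa$. As a sanity check of the geometry, the chosen value $\delta=\arcosh\sqrt2$ is precisely the constant distance between the original embedding of $\HR^\kappa$ and its deformed image, matching the interpretation of $\delta$ in Lemma~\ref{lem:anis}; moreover $2\cosh d\sim e^{d}$ as $d\to\infty$, so the deformation is asymptotically isometric while producing exactly the $\sqrt{d}$-type local branching flagged in Remark~\ref{rem:compare:local}.
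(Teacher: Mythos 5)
Your proof is correct and is essentially identical to the paper's own argument: both apply Lemma~\ref{lem:anis} with $\delta=\arcosh\sqrt2$ to reduce the statement to the positivity of $e^{-d}$, which is then obtained from Schoenberg's theorem and the Faraut--Harzallah result that $d$ is of conditionally negative type (with the same remark reducing infinite $\kappa$ to finitely many points). Your explicit identity $e^{d}=2\cosh d - e^{-d}$ simply spells out what the paper leaves implicit in its choice of $\delta$.
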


\begin{proof}
Since the tautological kernel $\beta$ is $\cosh d$, Lemma~\ref{lem:anis} with $\delta=\arcosh\sqrt2$ shows that it suffices to justify that $\Phi= e^{-d}$ is of positive type on $\HR^\kappa$. This is a known fact for the following reason. By Schoenberg's theorem, it suffices to show that $d$ is of conditionally negative type, which was proved in Corollaire~7.4 of~\cite{Faraut-Harzallah74}; there is no harm here in assuming $\kappa$ finite since the condition regards finitely many points at a time.
\end{proof}

Combining Proposition~\ref{prop:exp} with Theorem~\ref{thm:R:power}, we deduce that $\HR^\kappa$ admits kernels analogous to the kernels on trees of Proposition~\ref{prop:tree}.

\begin{cor}\label{cor:exp}
The the kernel $\lambda^d$ is of real hyperbolic type on $\HR^\kappa$ for all 
$1\leq \lambda\leq e$.\qed
\end{cor}

\begin{rem}
In contrast to the case of trees, the restriction $\lambda\leq e$ is necessary here (unless $\kappa=1$, in which case $\HR^\kappa$ is an elementary $\RR$-tree). Indeed, consider a map $f\colon \HR^\kappa \to \HR^{\kappa'}$ with $\cosh d(f(x), f(y)) = \lambda^{d(x,y)}$ for all $x,y\in \HR^\kappa$. Note that $f$ is quasi-isometric and hence extends to a map $\partial f\colon \partial\HR^\kappa \to \partial\HR^{\kappa'}$. If we express the angular metric at infinity in terms of exponentials of distances (see equation~\eqref{eq:angular:metric} below), we see that for $\xi, \eta\in \partial\HR^\kappa$ the distance between $\partial f(\xi)$ and $\partial f(\eta)$ is of the order of the distance between $\xi$ and $\eta$ to the power $\log \lambda$. When $\lambda>e$, this power is~$>1$ and the triangle inequality in the image becomes incompatible with the fact that the source contains smoth arcs (thanks to $\kappa\geq 2$).

The reader wishing to make this computation explicit can use the hyperbolic law of cosines to verify
\begin{equation}\label{eq:angular:metric}
\sin\tfrac12 \angle_p(\xi,\eta) = \lim_{x,y} \, e^{-\frac12(d(p,x) + d(p,y) - d(x,y))}
\end{equation}
where $\angle_p$ is the angular metric with respect to a base-point $p$ and $x,y$ converge radially to $\xi$ and $\eta$ respectively.
\end{rem}

\subsection{Glueing}
We recall the definition of the glueing of two metric spaces $X$ and $Y$, denoting all distance functions by $d$. Given two points $x_0\in X$ and $y_0\in Y$, the glueing $X\vee Y = X\underset{x_0=y_0}{\vee} Y$ is the quotient of the disjoint union $X\sqcup Y$ obtained by identifying $x_0$ with $y_0$. The distance on $X\vee Y$ is defined to coincide with the original distances on $X$ and on $Y$, and for mixed pairs it is defined by
\begin{equation*}
d(x,y) = d(x, x_0) + d(y_0, y) \kern10mm(x\in X, y\in Y).
\end{equation*}
We use the term of \emph{free products} of metric spaces somewhat informally to refer to the following situation. We are given a family $(X_j, x_{j,0})_{j\in J}$ of pointed metric spaces. In the interest of keeping a lower topological weight, hence a lower (infinite) hyperbolic dimension, we can specify subsets $X'_j\se X_j$ of admissible glueing points, for instance a countable orbit of an isometry group of interest. No restriction corresponds to the choice $X'_j=X_j$.

Starting from some $X_j$, we glue copies of each $X_k$ with $k\neq j$ at each $x_0\in X'_j$ by identifying $x_0$ with $x_{k,0}$. We then repeat this operation on each new space that was attached in a transfinite sequence of steps conducted over all countable ordinals. Finally, the free product is by definition the subset of points at finite total distance from the original copy of $X_j$.

\medskip

The precise construction is not very important for our present purposes, because the proof of the following result only necessitates to consider glueings of finitely many (but of course arbitrarily many) spaces.

\begin{thm}\label{thm:free}
Let $1< \lambda \leq e$.\\
Any free product $(X, d)$ of real hyperbolic spaces admits a natural embedding $f\colon X \to \HR^\kappa$ (for some $\kappa$) such that
\begin{equation*}
\cosh d(f(x), f(y)) = \lambda^{d(x,y)}
\end{equation*}
holds for all $x,y\in X$. Moreover, this embedding is equivariant for a representation of $\Isom(X)$ to $\Isom(\HR^\kappa)$.
\end{thm}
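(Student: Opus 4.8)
The plan is to reduce everything to showing that the single kernel $(x,y)\mapsto\lambda^{d(x,y)}$ is of real hyperbolic type on the free product $X$; the embedding $f$ and its naturality then come for free. Indeed, once this is known, Proposition~\ref{propdef:KHT} produces a map $f\colon X\to\HR^\kappa$ realising the kernel, and since $\lambda^d$ depends only on the $\Isom(X)$-invariant datum $d$, the group of bijections preserving the kernel contains $\Isom(X)$; the equivariant uniqueness statement (the real analogue of Theorem~\ref{thm:KCHT-char}, cf.\ Theorem~\ref{thm:R:rep}) then furnishes the representation $\Isom(X)\to\Isom(\HR^\kappa)$ for which $f$ is equivariant. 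Because positivity of type is a finitary condition and every finite subset of $X$ meets only finitely many glued pieces, it suffices to treat a glueing $X_0$ of \emph{finitely many} factors. Fixing one global base-point $x_0$ and enlarging each finite subset to contain it, the task becomes: for every finite glueing $X_0$ of real hyperbolic spaces, produce $f_0\colon X_0\to\HR^{\kappa_0}$ with $\cosh d\big(f_0(x),f_0(y)\big)=\lambda^{d(x,y)}$.

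I would build $f_0$ by induction on the number of pieces, using the hyperboloid (Minkowski) model of Remark~\ref{rem:proof:KCHT:Phi}. The pieces of $X_0$ form a finite tree, so they may be ordered so that each new piece $Y$ is attached to the assembled part $X'$ at a single point $q$; the base case of one factor is exactly Corollary~\ref{cor:exp}. For the inductive step, suppose $f'\colon X'\to\RR\oplus H'$ realises the kernel on $X'$ and $g\colon Y\to\RR\oplus H''$ does so on $Y$, normalised so that $g(q)=1\oplus 0$ with Hilbert component $w(y)\in H''$. In $\RR\oplus H'\oplus H''$ with the Minkowski form $B$, I attach $Y$ \emph{orthogonally} at $f'(q)$ by setting $f_0(x)=f'(x)\oplus 0$ for $x\in X'$ and
\begin{equation*}
f_0(y)=\lambda^{d(q,y)}\,f'(q)\oplus w(y)\kern10mm(y\in Y).
\end{equation*}
Then $f_0(q)=f'(q)\oplus 0$ is consistent, $B\big(f_0(y),f_0(y')\big)=\lambda^{d(y,y')}$ reproduces the kernel inside $Y$, and for $x\in X'$, $y\in Y$ the vanishing of the $H''$-component of $f_0(x)$ together with $B\big(f'(x),f'(q)\big)=\lambda^{d(x,q)}$ gives $B\big(f_0(x),f_0(y)\big)=\lambda^{d(q,y)}\,\lambda^{d(x,q)}$. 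This is precisely the hyperbolic Pythagorean relation $\cosh d=\cosh\alpha\cosh\beta$ for two totally geodesic pieces meeting at a right angle, and it is the same mechanism of appending one Hilbert direction that underlies Lemma~\ref{lem:anis}.

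The one point to keep straight, and the heart of the argument, is that these cross-terms are correct for \emph{all} pairs, not merely for points in adjacent pieces. This is automatic: in the inductive step the only new pairs have one point in the freshly attached $Y$, and since $Y$ meets $X'$ in the single point $q$, the geodesic between such $x$ and $y$ factors through $q$, whence $d(x,q)+d(q,y)=d(x,y)$ and the identity holds no matter how deep $x$ lies inside $X'$; all older pairs are covered by the inductive hypothesis. Assembling over all finite glueings shows that $\lambda^d$ is of real hyperbolic type on $X$, which completes the substantive part of the proof, the passage to the global embedding and the representation of $\Isom(X)$ being the formal consequence described above. The hypothesis $\lambda\le e$ enters only through the base case Corollary~\ref{cor:exp}, while $\lambda>1$ merely rules out the trivial kernel.
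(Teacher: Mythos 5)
Your proof is correct, and it follows the paper's skeleton --- reduce to finite glueings, fix a global base-point, induct on the number of glued pieces with Corollary~\ref{cor:exp} as the base case --- but it executes the inductive step by a genuinely different mechanism. The paper constructs nothing: it uses the base-point freedom in Proposition/Definition~\ref{propdef:KHT}, takes the glueing point itself as base point, and observes that the kernel $(z,z')\mapsto\lambda^{d(z,x_0)+d(x_0,z')}-\lambda^{d(z,z')}$ vanishes whenever $z$ and $z'$ lie in different components, so the positivity sum splits into two sums each handled by the inductive hypothesis (the same splitting trick as in the proof of Proposition~\ref{prop:tree}). You instead realise the inductive hypothesis geometrically via Proposition~\ref{propdef:KHT}(1) and attach the hyperboloid of the new piece orthogonally along the positive line through $f'(q)$; your computations are sound --- the normalisation $g(q)=1\oplus 0$ forces the first coordinate of $g(y)$ to be $\lambda^{d(q,y)}$, so $f_0(y)=\lambda^{d(q,y)}f'(q)\oplus w(y)$ does lie on the unit hyperboloid and reproduces the kernel on $Y$, and the cross terms are correct precisely because $d(x,y)=d(x,q)+d(q,y)$ in a glueing. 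The two arguments are in fact GNS-dual to one another: the vanishing of the paper's cross terms is exactly the orthogonality of your $H'$ and $H''$ summands. The paper's route buys brevity and stays entirely at the level of kernels; yours buys an explicit formula for the embedding and makes the geometric picture (orthogonal attachment at a point, in the spirit of Lemma~\ref{lem:anis}) transparent. One small point you should make explicit in the finitary reduction: the finite family of pieces retained must include all pieces traversed by paths between the chosen points (the convex hull in the tree of pieces), so that the induced metric agrees with the finite glueing metric; this hull is still finite, so nothing is lost.
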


The idea behind this construction is that each hyperbolic space embedded using the kernel of Corollary~\ref{cor:exp} will be sufficiently ``pointy'' at each of its points to allow a glueing with each of the other embedded spaces.

\begin{proof}
Given the general kernel yoga, we only need to prove that for any \emph{finite} glueing of hyperbolic spaces, the kernel $\lambda^d$ is of real hyperbolic type. We shall show this by induction on the number of hyperbolic spaces that have been glued together. The base case of a single space is given by Corollary~\ref{cor:exp}.

For the inductive steps, we consider a glueing $X \vee Y$ such that on both $X$ and $Y$ the kernel $\lambda^d$ is of real hyperbolic type. Let $x_0\in X$ and $y_0\in Y$ be the points that have been identified in $X \vee Y$. In order to prove that $\lambda^d$ is of real hyperbolic type on the whole $X \vee Y$, we shall use our freedom to choose a base-point in Proposition/Definition~\ref{propdef:KHT} (compare also Remark~\ref{rem:proofs:KCHT}). We choose the glueing point $x_0=y_0$ and hence we must show that the kernel
\begin{equation*}
(z,z') \longmapsto \lambda^{d(z, x_0) + d(x_0, z')} - \lambda^{d(z, z')}
\end{equation*}
is of positive type. By the definition of glueings, the above expression vanishes unless $z$ and $z'$ lie in the same component $X$ or $Y$. Therefore, as in the proof of Proposition~\ref{prop:tree}, the double sum defining the condition of positive type splits into two double sums and for each one we are reduced to our inductive assumption.
\end{proof}

We observe that the induction step established criterion~\eqref{pt:KHT:some} of Propo\-si\-tion/Defi\-ni\-tion~\ref{propdef:KHT} by reducing it to the equivalent but formally stronger criterion~\eqref{pt:KHT:KPT}.

\section{Additional considerations for real kernels}
\subsection{Fixed points at infinity}\label{sec:FP:infty}
In order to establish a sufficient condition for irreducibility, we first analyse a case of elementary actions that we will then want to rule out.

Suppose that an isometric action of a group $G$ on $\HR^\kappa$ fixes a point at infinity and let $\chi\colon G\to \RR$ be the corresponding Busemann character. Recall that $\chi$ is defined by the property that if $b$ is a Busemann function at the fixed point, then $b(gx)=b(x)+\chi(g)$ holds for all $x\in \HR^\kappa$. (This is defined in the \cat0 generality, a Busemann function being a limit of functions $x\mapsto d(x,x_n) - d(x_0, x_n)$ for any sequence $x_n$ converging to the fixed point at infinity.)

Consider now the function of real hyperbolic type $F$ on $G$ defined by some point $p\in \HR^\kappa$ by $F(g) = \cosh d(g p, p)$. We seek a property of $F$ that reflects the fact that $G$ has a fixed point at infinity:

\begin{prop}\label{propKHT:para}
The function $\Psi\colon G\to\RR_{\geq 0}$ determined by the identity
\begin{equation}\label{eq:KHT:para}
F(g) = \cosh \chi(g) + e^{-\chi(g)} \, \Psi(g)
\end{equation}
is a function of conditionally negative type on $G$.
\end{prop}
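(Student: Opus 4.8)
The plan is to make the fixed point $\xi$ explicit in the Minkowski model, read off $\Psi$ as the squared length of a horospherical cocycle, and then appeal to the classical dictionary between functions of conditionally negative type and affine isometric actions on Hilbert spaces recalled in the introduction.

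First I would realise $\xi$ as a $B$-isotropic direction $\eta\in\RR\oplus H$; since $G$ fixes $\xi$ we have $g\eta\in\RR_{>0}\,\eta$ for all $g\in G$, and unwinding the definition of the Busemann character shows that this scaling factor equals $e^{\pm\chi(g)}$. Passing to horospherical coordinates based at $\xi$, each point of $\HR^\kappa$ becomes a pair $(\tau,w)\in\RR\times H$, where $\tau$ is the Busemann height and $w$ ranges over the model horosphere, and the metric takes the warped-product form
\[
\cosh d\big((\tau_1,w_1),(\tau_2,w_2)\big)=\cosh(\tau_1-\tau_2)+\tfrac12\,e^{-\tau_1-\tau_2}\,\|w_1-w_2\|^2 .
\]
Choosing $p$ as the basepoint $(0,0)$ and writing $gp=(\tau_g,w_g)$ with $\tau_g=\chi(g)$, the value of this identity at the pair $(gp,p)$ is exactly $F(g)=\cosh\chi(g)+e^{-\chi(g)}\cdot\tfrac12\|w_g\|^2$. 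Comparing with the defining identity pins down $\Psi(g)=\tfrac12\|w_g\|^2$.

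Next I would identify $g\mapsto w_g$ as the translation part of the induced action of $G$ on the horosphere $H$, that is, as a $1$-cocycle $w_{gh}=w_g+\sigma(g)w_h$ for the linear part $\sigma$ of that action. Were $\sigma$ orthogonal, the function $g\mapsto\|w_g\|^2$ would be of conditionally negative type directly by the Guichardet correspondence, and the proof would be complete.

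This is precisely where the difficulty sits: because $\chi$ need not vanish, the linear part $\sigma(g)=e^{\chi(g)}A_g$ is a \emph{similarity} scaled by the Busemann character, not an isometry, so $w$ is a cocycle for a conformal rather than orthogonal representation. The whole content of the statement is to absorb this scaling. The route I would attempt is to go back to the hypothesis that $F$ is of real hyperbolic type --- equivalently, that $F(g)F(h)-F(g^{-1}h)$ is of positive type, as in Proposition/Definition~\ref{propdef:KHT} --- and to let the auxiliary base point of that positive-type kernel tend to $\xi$: in the limit the reverse Cauchy--Schwarz (hyperbolic) inequality should degenerate, after dividing out the exploding $\cosh\chi$ factors, into the conditionally negative type inequality governing $\Psi$. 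Controlling this limit, and checking that the Busemann scaling factors recombine to leave a genuine Hilbertian kernel, is the step I expect to be the main obstacle.
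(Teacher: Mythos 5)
Your first half is exactly the paper's own argument: horospherical coordinates at $\xi$, the warped\hyph{}product formula for $\cosh d$, and evaluation at the pair $(gp,p)$. It does establish the identity~\eqref{eq:KHT:para} with some $\Psi\geq 0$, up to one bookkeeping point: with the stated conventions ($b(gx)=b(x)+\chi(g)$ and warping factor $e^{-\tau_1-\tau_2}$ in the coordinate where $\xi$ sits at $\tau\to+\infty$) the computation gives $F(g)=\cosh\chi(g)+\tfrac12 e^{\chi(g)}\|w_g\|^2$, so the function pinned down by~\eqref{eq:KHT:para} is $\Psi(g)=\tfrac12\|e^{\chi(g)}w_g\|^2$ rather than $\tfrac12\|w_g\|^2$. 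This identity, and its behaviour under moving the base point (Remark~\ref{rem:KHT:para}), is in fact all that the paper ever uses of this Proposition, e.g.\ in the proof of Proposition~\ref{prop:min:irred}.

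The step you flag as ``the main obstacle'' is a genuine gap in your proposal, but you should know it cannot be filled: the conditional\hyph{}negativity claim fails as literally stated, and the similarity twist you identified is precisely why. Since $F$ and $\cosh\chi$ are invariant under $g\mapsto g^{-1}$ while $\chi(g^{-1})=-\chi(g)$, equation~\eqref{eq:KHT:para} forces $\Psi(g^{-1})=e^{-2\chi(g)}\Psi(g)$, whereas a real function of conditionally negative type must satisfy $\Psi(g^{-1})=\Psi(g)$; so $\Psi$ can only be of conditionally negative type if it vanishes off $\ker\chi$ (on $\ker\chi$ itself, where the horospherical action is genuinely isometric, your Guichardet argument is complete and correct). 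Concretely, for the stabiliser of $\infty$ in $\Isom(\HR^2)$ acting on the upper half\hyph{}plane with $p=i$ and $g\colon z\mapsto az+b$, one has $\chi(g)=-\log a$ and $\Psi(g)=b^2/(2a^2)$, which is not inversion\hyph{}symmetric; even the symmetrised function $F-\cosh\chi=b^2/(2a)$ fails the three\hyph{}point inequality (take $g_1=\mathrm{id}$, $g_2\colon z\mapsto 2z$, $g_3\colon z\mapsto z+1$ and coefficients $(2,-3,1)$: the sum equals $1/2>0$). The paper's own proof glosses over exactly this point by asserting that $g_*v=e^{\chi(g)}g.v$ is an affine isometric action; but $g\mapsto g_*$ is not a homomorphism --- on the $ax+b$ group, $g_*$ is the translation by $b/a$, and $(a,b)\mapsto b/a$ is not additive --- so the scaling cannot be normalised away, just as you suspected. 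Finally, your proposed limit $x_0\to\xi$ cannot rescue the ordinary statement either; what it actually produces is the correct \emph{relative} statement hiding here: $F(g^{-1}h)-\cosh\chi(g^{-1}h)=\tfrac12 e^{\chi(g)+\chi(h)}\|w_g-w_h\|^2$, hence $\sum_{i,j}c_ic_j\bigl(F(g_i^{-1}g_j)-\cosh\chi(g_i^{-1}g_j)\bigr)\leq 0$ for all coefficients with $\sum_i c_i e^{\chi(g_i)}=0$, i.e.\ conditional negativity relative to the weight $e^{\chi}$ rather than for $\sum_i c_i=0$.
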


\begin{proof}
Let $\xi\in\partial\HR^\kappa$ be the given point fixed by $G$. Recall from~\cite[\S2]{Monod-Py2_arx} that we can parametrise $\HR^\kappa$ by $\RR\times E$, where $E$ is a Hilbert space, under the map
\begin{equation}\label{eq:horopara}
\sigma_s(v) = \begin{pmatrix}
\frac 12 (e^s + e^{-s} \|v\|^2)\\
e^{-s}\\
e^{-s} v
\end{pmatrix}
\end{equation}
with $s\in\RR$ and $v\in E$ so that $-s$ is a Busemann function at $\xi$ applied to $\sigma_s(v)$. We recall further that in the above model the Minkowski from $B$ is given by
\begin{equation*}
B\left(
\begin{pmatrix}
x\\
y\\
v
\end{pmatrix}
\begin{pmatrix}
x'\\
y'\\
v'
\end{pmatrix}
\right)
=xy' + x'y - \her{v}{v'}.
\end{equation*}
Moreover we can normalise the parametrisation so that the point $p\in \HR^\kappa$ defining $F$ is $p= \sigma_0(0)$. Now $g\in G$ acts by $g\sigma_s(v) = \sigma_{s-\chi(g)}(g.v)$ for some action $g.v$ on $E$. This action $g.v$ is an affine action with multiplicative character $e^{\chi}$; in other words, $g_*v= e^{\chi(g)} g.v$ defines an affine isometric action on $E$ and thus the function $\Psi$ defined on $G$ by $\Psi(g) = \tfrac12 \|g. 0\|^2$ is a function of conditionally negative type.

Finally, it remains only to compute
\begin{equation*}
F(g)= B\left(g \sigma_0(0),  \sigma_0(0)\right) = B\left(
\begin{pmatrix}
\tfrac12 e^{-\chi(g)} + \tfrac12 e^{\chi(g)} \|g.0\|^2\\
 e^{\chi(g)} \\
e^{\chi(g)} g.0
\end{pmatrix}
\begin{pmatrix}
\tfrac12\\
1\\
0
\end{pmatrix}
\right).
\end{equation*}
After simplification, we find precisely~\eqref{eq:KHT:para}, as was to be shown.
\end{proof}

\begin{rem}\label{rem:KHT:para}
For later use, we record that for $p'= \sigma_1(0)$ and $F'(g) = B(gp', p')$, the same computations as above lead to formula~\eqref{eq:KHT:para} with $\Psi$ replaced by $\Psi' = e^{-2} \Psi$.
\end{rem}

\subsection{Functions defining irreducible representations}
We can now establish the irreducibility criterion.

\begin{proof}[Proof of Proposition~\ref{prop:min:irred}]
Consider an isometric action of $G$ on some $\HR^\kappa$ and $p\in \HR^\kappa$ such that $G p$ is total and such that $F(g) = \cosh d(gp, p)$ for all $g\in G$.

We can assume that $p$ is not fixed by $G$ and does not belong to a geodesic line preserved by $G$, since otherwise we would have $\alpha\leq 1$ and the conclusion would hold for trivial reasons.

We claim that $G$ does not fix a point at infinity. Indeed, otherwise Proposition~\ref{propKHT:para} implies that $F$ is of the form
\begin{equation*}
F(g) = \cosh \chi(g) + e^{-\chi(g)} \, \Psi(g).
\end{equation*}
Moreover, by Remark~\ref{rem:KHT:para}, another point would provide us with a function of hyperbolic type $F'$ given by
\begin{equation*}
F'(g) = \cosh \chi(g) + e^{-\chi(g)} \, e^{-2}\,\Psi(g).
\end{equation*}
We see that $F'\leq F$ and that the ratio $F'/F$ is bounded by~$e^2$. Therefore the assumption on $F$ implies $F'=F$, or equivalently that $\Psi$ vanishes. This means that $G$ preserves in fact an entire geodesic line; under the parametrisation~\eqref{eq:horopara}, this is precisely the geodesic $s\mapsto \sigma_s(0)$, which contains $p$. This contradiction proves the claim.

It now follows that $\HR^\kappa$ contains a minimal $G$-invariant real hyperbolic subspace $X\se \HR^\kappa$, see Proposition~4.3 of~\cite{Burger-Iozzi-Monod}. It remains to show $X=\HR^\kappa$. For this, it suffices to show $p\in X$ since $Gp$ is total. Consider the nearest-point projection $\pi\colon \HR^\kappa\to X$ and define the function of hyperbolic type $F'$ by $F'(g) = \cosh d(g \pi(p), \pi(p))$. Since $\pi$ is $G$-equivariant, we have
\begin{equation*}
F'(g) = \cosh d\big(\pi(g p), \pi(p)\big).
\end{equation*}
Since projections are non\hyph{}expanding~\cite[II.2.4(4)]{Bridson-Haefliger}, we have $F'\leq F$. On the other hand, we also have
\begin{equation*}
d(gp, p) - d\big(\pi(g p), \pi(p)\big) \leq d\big(qp, \pi(g p)\big) + d\big(\pi(p), g\big) = 2 d\big(\pi(p), g\big).
\end{equation*}
This implies that $\cosh d(gp, p)$ is bounded by a multiple of $\cosh d(\pi(g p), \pi(p))$ and thus our assumption now implies $F'=F$.

We can assume that some $g\in G$ moves $p$ since otherwise $\HR^\kappa$ is reduced to a point anyway. Now we have $p= \pi(p)$ because otherwise the sandwich lemma (see~\cite[II.2.12]{Bridson-Haefliger})  would force
$$d\big(g \pi(p), \pi(p)\big) < d(gp, p)$$
which would contradict $F'=F$. Thus $p\in X$, as was to be established.
\end{proof}

\subsection{Epilogue on the definition of kernels}\label{sec:inverse}
To conclude these notes, we return to the definition of kernels and functions of real hyperbolic type.

First, we record yet another characterisation, recalling from Propo\-si\-tion/Defi\-ni\-tion~\ref{propdef:KHT} that a symmetric kernel $\beta\colon X\times X\to\RR_{\geq 0}$ taking the constant value~$1$ on the diagonal is of real hyperbolic type iff for some or every $x_0\in X$, the kernel
\begin{equation*}
\Phi(x,y) = \beta(x, x_0) \,\beta(x_0, y) -  \beta(x,y)
\end{equation*}
is of positive type. We claim that this is equivalent to the following:

For some or every $x_0$, the kernel
\begin{equation*}
\Psi(x,y) = \tfrac12\left( \beta(x,x_0) - \beta(x_0,y)\right)^2 +  \beta(x,y)-1
\end{equation*}
is of conditionally negative type. Indeed, a computation shows
\begin{equation*}
\Phi(x,y) = \Psi(x, x_0)+ \Psi(x_0, y) - \Psi(x,y)
\end{equation*}
and now the equivalence becomes a standard polarisation identity, see Lem\-ma~C.3.1 in~\cite{Bekka-Harpe-Valette} (that lemma applies because $\Psi$ vanishes on the diagonal).

\medskip

Next, we return to the observation that the two kernels at the beginning of Section~\ref{sec:tree} are inverse to each other. This is a manifestation of the following general fact: if $\beta$ is of real hyperbolic type, then $1/\beta$ is of positive type.

Indeed, more is true: $\beta^{-t}$ is a kernel of positive type for all $t\geq 0$. Indeed, by Schoenberg's theorem~\cite{Schoenberg38}, that statement is equivalent to the conditionally negative type of $\log \beta$. For the latter, it suffices by Proposition~\ref{propdef:KHT} to show that for any hyperbolic space $\HR^\kappa$, the kernel
$$\log \cosh d \colon \HR^\kappa\times \HR^\kappa \lra \RR$$
is of conditionally negative type. This is the content of Proposition~7.3 in~\cite{Faraut-Harzallah74}; the latter is stated with $\kappa$ finite, but its proof does not use this restriction (which is irrelevant anyway since the conditionally negative type condition is checked on finitely many points at a time). 

\medskip
Thus $1/\beta$ is a kernel of positive type with the special property that \emph{all its positive Hadamard powers} remain of positive type; this is called an \emph{infinitely divisible} kernel in the sense of Loewner. We recall that in general any non\hyph{}integer power fails to preserve positive type (Theorem~2.2 in~\cite{FitzGerald-Horn}). In fact infinitely divisible kernels have been characterised as follows. Suppose that $\gamma$ is of positive type with non\hyph{}negative entries. Then $\gamma$ is infinitely divisible if and only if $-\log\gamma$ is of conditionally negative type (Corollary~1.6 in~\cite{Horn69}). Applying this to $\gamma=1/\beta$, we come back again to the fact that $\log\beta$ is  of conditionally negative type.


\bibliographystyle{amsplain}
\bibliography{../../BIB/ma_bib}

\end{document}